\theoremstyle{plain}
\newtheorem{thm}{Theorem}[section]
\newtheorem{lem}[thm]{Lemma}
\newtheorem{cor}[thm]{Corollary}
\newtheorem{prop}[thm]{Proposition}
\theoremstyle{definition}
\newtheorem{ntt}[thm]{}
\newtheorem{ex}[thm]{Example}
\newtheorem{rem}[thm]{Remark}
\newtheorem{dfn}[thm]{Definition}
\newcommand{\zz}{\mathbb{Z}} 
\newcommand{\DM}{\mathrm{DM}} 
\newcommand{\qq}{\mathbb{Q}} 
\newcommand{\rr}{\mathbb{R}}
\newcommand{\laz}{\mathbb{L}}
\newcommand{\pp}{\mathbb{P}}
\newcommand{\id}{\mathrm{id}}
\newcommand{\lla}{\langle\!\langle}
\newcommand{\rra}{\rangle\!\rangle}
\newcommand{\D}{\mathrm{D}}
\newcommand{\E}{\mathrm{E}}
\newcommand{\F}{\mathrm{F}_4}
\newcommand{\llarrow}{\mathrel{\vcenter{\vbox{\offinterlineskip
\hbox{$\leftarrow$}\hbox{$\leftarrow$}}}}}
\newcommand{\lllarrow}{\mathrel{\vcenter{\vbox{\offinterlineskip
\hbox{$\leftarrow$}\hbox{$\leftarrow$}\hbox{$\leftarrow$}}}}}
\DeclareMathOperator{\Hom}{\mathrm{Hom}}
\DeclareMathOperator{\End}{\mathrm{End}}
\DeclareMathOperator{\Cone}{\mathrm{Cone}}
\DeclareMathOperator{\PGL}{\mathrm{PGL}}
\newcommand{\pr}{\mathrm{pr}}
\newcommand{\res}{\mathrm{res}}
\newcommand{\BX}{\overline{X}}
\newcommand{\pt}{\mathrm{pt}}
\DeclareMathOperator{\Spec}{\mathrm{Spec}}
\DeclareMathOperator{\CH}{\mathrm{CH}}
\DeclareMathOperator{\Ch}{\mathrm{Ch}}
\title{Motivic construction of cohomological invariants}
\author{NIKITA SEMENOV}
\thanks{The author gratefully acknowledges the support of the Sonderforschungsbereich/Transregio 45 (Bonn-Essen-Mainz).}
\keywords{Linear algebraic groups, cohomological invariants, algebraic cycles, algebraic cobordism, motivic cohomology, motives.}
\subjclass[2010]{20G15, 19E15.}
\date{}
\begin{document}
\maketitle
\begin{abstract}
Let $G$ be a group of type $\E_8$ over $\qq$ such that $G_\rr$ is
a compact Lie group, let $K$ be a field of characteristic $0$,
and $$q=\langle\!\langle -1,-1,-1,-1,-1\rangle\!\rangle$$ a $5$-fold Pfister form.
J-P.~Serre posed in a letter to M.~Rost written on June 23, 1999
the following problem: Is it true that
$G_K$ is split if and only if $q_K$ is hyperbolic?

In the present article we construct a cohomological invariant of degree $5$ for groups
of type $\E_8$ with trivial Rost invariant over any field $k$ of characteristic $0$,
and putting $k=\mathbb{Q}$ answer positively this
question of Serre. Aside from that, we show that a variety which possesses
a special correspondence of Rost is a norm variety.
\end{abstract}

\section{Introduction}
Let $G$ be a group of type $\E_8$ over $\qq$ such that $G_\rr$ is
a compact Lie group.
Let now $K/\qq$ be a field extension
and $q=\langle\!\langle -1,-1,-1,-1,-1\rangle\!\rangle$
a $5$-fold Pfister form.
J-P.~Serre posed in a letter to M.~Rost written on June 23, 1999
the following problem:

\medskip

\centerline {Is it true that
$G_K$ is split if and only if $q_K$ is hyperbolic?}

\medskip

M.~Rost replied on July 2, 1999 proving that if $q_K$ is hyperbolic,
then $G_K$ is split (see \cite{GaSe09} for the proof).
One of the goals of the present article is to give a positive answer to Serre's question
(see Theorem~\ref{maintheorem}).

Let us recall first some recent developments in the topics which are relevant for
the method of the proof of this result.

\subsection{Cohomological invariants}
The study of cohomological invariants was initiated
by J-P.~Serre in the 90'ies. Serre conjectured the existence of an invariant
in $H_{et}^3(k,\qq/\zz(2))$
of $G$-torsors, where $G$ is a simply connected simple
algebraic group over a field $k$.
This invariant was constructed by M.~Rost and
is now called the {\it Rost invariant} of $G$
(see \cite[31.B and pp.~448--449]{Inv}).

Nowadays there exist numerous constructions and estimations of cohomological
invariants for different classes of algebraic objects (see e.g. \cite{GMS}).
Nevertheless, the most constructions of cohomological invariants rely
on a specific construction of the object under consideration.
Unfortunately, for many groups, like $\E_8$,
there is no classification and no general construction so far.

\subsection{Chow motives of twisted flag varieties}
Another direction which we discuss now is the theory of motives
of twisted flag varieties. Typical examples of such varieties
are projective quadrics and Severi--Brauer varieties.

The study of twisted flag varieties was initiated by
Rost when he provided a motivic decomposition of a Pfister quadric
used later in the proof of the Milnor conjecture (see \cite{Ro98}).
The motives of Severi--Brauer varieties were studied by Karpenko and the motives
of general quadrics by Izhboldin, Karpenko, Merkurjev, and Vishik.

A systematical theory of motives of general twisted flag varieties
was developed in a series of our articles with a culmination in \cite{PSZ}
and \cite{PS} where the structure of the motive of a generically split twisted flag variety
was established. Moreover, in some cases we provided an alternative construction of {\it generalized
Rost motives} as indecomposable direct summands of generically split varieties
(see \cite[\S7]{PSZ}).

The technique developed in our articles does not use cohomological
invariants of algebraic groups and of twisted flag varieties at all,
and our arguments do not rely on specific constructions of algebraic groups.

\subsection{The Bloch--Kato Conjecture}
The Bloch--Kato conjecture (more precisely its proof) provides a bridge
between cohomological invariants and motives.

The Bloch--Kato conjecture says that for any $n$ and any prime number $p$
the norm residue homomorphism
$$K_n^M(k)/p\to H_{et}^n(k,\mu_p^{\otimes n})$$
$$\{a_1,\ldots,a_n\}\mapsto(a_1)\cup\ldots\cup(a_n)$$
between the Milnor K-theory and the Galois cohomology of a field $k$ with $\mathrm{char}\,k\ne p$
is an isomorphism.

The case $p=2$ of the Bloch--Kato conjecture is known as the Milnor conjecture.

The proof of the Bloch--Kato conjecture goes as follows. Given a pure symbol
$u$ in $H_{et}^n(k,\mu_p^{\otimes n})$ M.~Rost constructs a splitting variety $X_u$
for $u$ with some additional properties (If $p=2$ one takes norm quadrics, 
for $n=2$ one takes Severi--Brauer varieties, for
$p=3$ the construction is based on the Merkurjev--Suslin varieties;
see \cite{S08} for an explicit construction for $p=n=3$).
Then V.~Voevodsky using the category of his motives and symmetric powers
splits off a certain direct summand $M_u$ from the motive
of $X_u$ (see \cite{Vo03}). The motive $M_u$ is called the {\it generalized Rost motive}.

Later Rost was able to simplify Voevodsky's construction and found the same motive
$M_u$ staying inside the category of classical Chow motives (see \cite{Ro07}).
His main idea
was to produce an algebraic cycle on $X_u$ called a {\it special correspondence}
(see Definition~\ref{special}),
forget about the symbol $u$ and use the special correspondence to construct
the motive $M_u$.
This motive is used then to construct exact triangles in the category
of Voevodsky's motives. These triangles essentially involve mixed motives,
in particular, deviate from the category of classical Chow motives,
and are used (among other results of Rost and Voevodsky) to finish the
proof of the Bloch--Kato conjecture.

The symbol $u$ is a cohomological
invariant of the variety $X_u$ (see \cite[Theorem~6.19]{Vo03}).
Aside from that, one can notice
that the special correspondences of Rost resemble the algebraic cycles that
we construct in the proof of our motivic decompositions
(compare \cite[Lemma~5.2]{Ro07} and \cite[Lemma~5.7]{PSZ}).

Summarizing: The technique developed by Rost and Voevodsky in the proof
of the Bloch--Kato conjecture gives a way to produce algebraic cycles and motives
out of cohomological invariants of varieties.

\subsection{Conclusion}
The discussion in the above subsections leads to the following conjecture.
Namely, there should exist a way back from
motives to cohomological invariants. 
Notice that an evidence of this conjecture appears already in articles of
O.~Izhboldin and A.~Vishik \cite{IzhV00} and \cite{Vi98}, where the case of
quadrics was treated.

In the present article we show in a constructive way that the varieties
which possess a special correspondence of Rost admit cohomological
invariants.

This result has several unexpected applications in the theory of algebraic groups.
In the present article we use it to construct
a cohomological invariant in $H_{et}^5(k,\zz/2)$ for any group of type $\E_8$
whose Rost invariant is trivial. In turn, this invariant allows us to give
in the last section a positive answer to a question of Serre about compact groups of type
$\E_8$ mentioned above.

We remark that the construction of this invariant essentially
relies on the motivic decomposition of the respective variety of Borel subgroups.
We compute this decomposition in Section~\ref{sec8} using the $J$-invariant of algebraic groups
(see \cite{PSZ}) and the classification of generically split twisted flag varieties (see \cite{PS} and \cite{PS11}).

\subsection{\textbf{\emph{Leitfaden} of the proof}}
Let us present a simplified scheme of the construction of the degree $5$ invariant for groups
of type $\E_8$.

Let $G$ be an anisotropic group of type $\E_8$ over a field $k$ of characteristic $0$ with trivial Rost invariant and
let $X$ be the variety of Borel subgroups of $G$.

Using ``compression'' described in Section~\ref{seccompr} we construct a smooth projective variety $\widetilde Y$
over $k$ of dimension $15=2^{5-1}-1$ with no $0$-cycles of odd degree and such that the direct summands of
the motives of $X$ and $\widetilde Y$ supporting the $0$-cycles are isomorphic (see Definition~\ref{uppermot}).

Next we show in Lemma~\ref{lemmae8}:
%\begin{samepage}
\begin{center}   
\fbox{$J$-invariant of algebraic groups (\cite{PSZ}, \cite{PS}, and \cite{PS11})}
\end{center}
$$\Downarrow \text{{\footnotesize{the Rost invariant of {\it G} is zero}}}$$
\begin{center}
\fbox{
$\begin{array}{l}
\text{Direct summand of the motive of $X$ (and hence of $\widetilde Y$) supporting}\\
\text{the $0$-cycles is {\bf binary}}
\end{array}
$}
\end{center}
%\end{samepage}

\medskip

\newpage

Finally, we show (Theorem~\ref{binmot}):
%\begin{samepage}

\medskip

\begin{center}
\fbox{Special correspondence (Rost)}~\fbox{Algebraic cobordism (Levine, Morel)}
\end{center}

\smallskip

$\quad\quad\quad\quad\quad\quad\quad\quad\quad\Downarrow\quad\quad\quad\quad\quad\quad\quad\quad\quad\quad\quad\quad\quad\quad\quad\Downarrow$
\begin{center}
\fbox{
$\begin{array}{l}
\text{Rost motive (Prop.~\ref{rostmotive}) and}\\
\text{computation of characteristic}\\
\text{numbers (Lemma~\ref{rostbinary})}
\end{array}$
}  \fbox{$\nu_{n-1}$-varieties (Vishik, Prop.~\ref{voeconj})}
\end{center}
$$\qquad\qquad\qquad\qquad\Downarrow\text{Milnor Conjecture (Voevodsky)}$$
\begin{center}
\fbox{
$\begin{array}{c}
\text{A smooth projective variety $\widetilde Y$ over $k$ of dimension $2^{n-1}-1$ with} \\
\text{no $0$-cycles of odd degree and with a binary motivic summand supporting the}\\
\text{$0$-cycles possesses a cohomological invariant $u\in H_{et}^n(k,\zz/2)$}\\
\text{such that for any field extension $K/k$ one has $u_K=0$ iff $\widetilde Y_K$ has a $0$-cycle of odd degree}
\end{array}$
}
\end{center}
%\end{samepage}

\medskip

All this together gives an invariant $u$ of degree $5$ for $\E_8$.

We remark that the number $15=\dim\widetilde Y$ has a combinatorial origin. Namely, it is related to degrees
of certain polynomial invariants of the Weyl group of type $\E_8$ (see \cite{Ka85}).

Let now $G$ be a group of type $\E_8$ over $\qq$ of compact type and let $G_0$ be a split group of type $\E_8$ over
a field $K$ of characteristic zero. Then $u=(-1)^5\in H_{et}^5(\qq,\zz/2)$ and one can show (see \cite{GaSe09}):

%\begin{samepage}
\begin{center}
\fbox{$\mathrm{PGL}_2(31)\le G_0$} \quad\quad\quad  \fbox{$-1$ is a sum of $16$ squares in $K$}
\end{center}

\smallskip

$\quad\quad\quad\quad\quad\quad\quad\quad\quad\quad\quad\Updownarrow \text{Serre}\quad\quad\quad\quad\quad\quad\Updownarrow\text{Quadratic form theory}$
\begin{center}                                                                                      
\fbox{$G_K$ splits} $\overset{(*)}{\Longleftrightarrow}$ \fbox{$(-1)^5=0\in H_{et}^5(K,\mathbb{Z}/2)$}
\end{center}
%\end{samepage}

Thus, the positive solution to Serre's problem $(*)$ implies some classification results about finite subgroups of algebraic groups
over a field $K$. Moreover, instead of $16$ squares one can take any number of squares between $16$ and $31$ (see \cite{Pf65}).
We remark finally that one can write $31$ in $\PGL_2(31)$ as $30+1$, and $30$ is the Coxeter number for $\E_8$,
see \cite[Section~2.1, Example 5]{Se00} for a general case.

\subsection{Structure of the article}
In section~\ref{cobordism} we recall some results on algebraic cobordism of Levine--Morel and
$\nu_n$-varieties. We need them to prove a certain statement about $\nu_n$-varieties (Prop.~\ref{voeconj})
which appears as conjecture~1 before Thm.~6.3 in \cite{Vo03}. The proof
belongs to A.~Vishik, and we represent it here with his kind permission.
In Sections~\ref{sec3} and \ref{skeleton} we recall basic properties of the category of motives of Voevodsky
and the Rost construction of special correspondences resp.
Sections~\ref{mainsection} and \ref{sec6} are devoted to the construction of cohomological invariants for
varieties admitting a special correspondence (Theorems~\ref{mainthm} and \ref{binmot}).
In section~\ref{seccompr} we provide some general results which allow to ``compress''
varieties. Finally, in section~\ref{sec8} we construct an invariant for groups of type $\E_8$
with trivial Rost invariant and solve Serre's problem (Theorem~\ref{maintheorem}) finishing the proofs of all main results of the present article.

\subsection{Acknowledgments}
The first version of this article appeared on arxiv.org in May, 2009.
I would like to thank sincerely Skip Garibaldi, Nikita Karpenko, Alexander Merkurjev,
Fabien Morel, Victor Petrov, Alexander Vishik, Kirill
Zainoulline, Maksim Zhykhovich, and especially Stefan Gille for encouragement and for interesting discussions and remarks
on the subject of the article over a long period of time.

\section{Algebraic cobordism and $\nu_n$-varieties}\label{cobordism}
\begin{ntt}[Lazard ring]
Fix a prime $p$ and a field $k$ with $\mathrm{char}\,k=0$.
Consider the algebraic cobordism $\Omega^*$ of Levine--Morel (see \cite{LM})
and the Lazard ring $\laz=\Omega(\Spec k)$.
This is a graded ring additively generated by the
cobordism classes of maps $$[X]=[X\to\Spec k]\in\laz^{-\dim X}$$
where $X$ is an irreducible smooth projective variety over $k$.
It is known that this ring is isomorphic to the polynomial ring with
integer coefficients on a countable set of variables.
\end{ntt}

\begin{ntt}[Characteristic numbers]\label{charnum}
Given a partition
$J=(l_1,\ldots,l_r)$ of arbitrary length $r\ge 0$ with $l_1\ge l_2\ge\ldots\ge l_r>0$
one can associate with it a {\it characteristic class}
$$c_J(X)\in\CH^{|J|}(X)\qquad (|J|=\sum_{i\ge 1} l_i)$$ of $X$ as follows:
Let $P_J(x_1,\ldots,x_r)$ be the smallest symmetric polynomial (i.e., with a minimal number
of non-zero coefficients) containing
the monomial $x_1^{l_1}\ldots x_r^{l_r}$.
We can express $P_J$ as a polynomial on the standard symmetric functions
$\sigma_1,\ldots,\sigma_r$ as
$$P_J(x_1,\ldots,x_r)=Q_J(\sigma_1,\ldots,\sigma_r)$$ for some polynomial
$Q_J$. Let $c_i=c_i(-T_X)$ denote the $i$-th Chern class 
of the virtual normal bundle of $X$. Then
$$c_J(X)=Q_J(c_1,\ldots,c_r).$$ The degrees of the characteristic classes
are called the {\it characteristic numbers}.

If $J=(1,\ldots,1)$ ($i$ times), $c_J(X)=c_i(-T_X)$
is the usual Chern class. We denote
$b_{i(p-1)}(X):=c_{(p-1,\ldots,p-1)}(X)$ ($i$ times).
In particular, if $p=2$ then $b_i(X)=c_i(-T_X)$.
If $J=(p^n-1)$, we write $c_J(X)=s_{p^n-1}(X)$.
The class $s_d(X)$ is called the $d$-th {\it Milnor class} of $X$
(see \cite[Section~4.4.4]{LM}).

The characteristic numbers satisfy different divisibility properties. Moreover, there exist
relations between different characteristic numbers (see e.g. \cite[Section~9]{Ro07}, \cite[Prop.~7.11]{Me02}).
\end{ntt}

\begin{dfn}[$\nu_n$-varieties]
Let $p$ be a fixed prime.
\begin{enumerate}
\item[(I)] A smooth projective variety $X$ is called a $\nu_n$-variety if
$\dim X=p^n-1$, all characteristic numbers of $X$ are divisible by $p$, and $$\deg s_{\dim X}(X)\ne 0\text{ mod }p^2.$$
\item[(II)] A smooth projective variety $X$ is called a $\nu_{\le n}$-variety
if $X$ is a $\nu_n$-variety and
for all $0\le i\le n$ there exists a $\nu_i$-variety $Y_i$ and a morphism
$Y_i\to X$.
\end{enumerate}
\end{dfn}

\begin{ex}\label{cobexa}
A typical example of a $\nu_n$-variety is a smooth
projective hypersurface $u$ of degree $p$ in $\mathbb{P}^{p^n}$ (see \cite[Proposition~3.6]{Vo01}).

The fact that all characteristic classes of $u$ are divisible by $p$ follows
from the divisibility of its characteristic numbers by $\deg h^{p^n-1}=p$, where $h$ is a hyperplane section
of $u$.
\end{ex}

\begin{rem}
The above definition of a $\nu_n$-variety is more restrictive than the definition which
Voevodsky uses in his articles. Namely, he does not assume that all characteristic numbers
are divisible by $p$.
\end{rem}

By \cite[Lemma~4.4.19]{LM} one can extend the definition of characteristic numbers to an additive
map $\Omega_d(\Spec k)\to\zz$, where $d=\deg P_J(x_1,\ldots,x_r)$.
In particular, one may speak about a {\it $\nu_n$-element} in the Lazard ring, i.e., an element $u$ in $\laz_{p^n-1}$
with all characteristic numbers divisible by $p$ and with $\deg s_{p^n-1}(u)\ne 0\mod p^2$.

\begin{ntt}[Landweber--Novikov operations]
Further on one can construct the Landweber--Novikov operations on
the cobordism ring $\Omega^*(V)$ for a smooth quasi-projective variety $V$.
Given a partition $J$ as above
we write $$S_{L-N}^J\colon\Omega^*(V)\to\Omega^{*+|J|}(V)$$ for the
operation which maps a cobordism class $[f\colon X\to V]\in\Omega^*(V)$
to $f_*(Q_J(c^\Omega_1,\ldots,c^\Omega_r))$, where
$c^\Omega_i=c^\Omega_i(-T_X+f^*(T_V))$
are the cobordism Chern classes (see \cite[Definition~2.12]{Vi07}).

Since $\Omega^*$ is a universal cohomology theory, there exists
a canonical map $\pr\colon\Omega^*\to\CH^*$ which turns out to be surjective.
For a particular partition $J=(p-1,\ldots,p-1)$ ($i$ times) denote
$S_{L-N}^i:=S_{L-N}^J$. Then the following diagram commutes:
$$
\xymatrix{
\Omega^*(V) \ar[d]_{S_{L-N}^i}\ar[r]^-{\pr} & \CH^*(V)\ar[r] & \Ch^*(V)\ar[d]^{S^i}\\
\Omega^{*+(p-1)i}(V)\ar[r]^-{\pr} & \CH^{*+(p-1)i}(V)\ar[r] & \Ch^{*+(p-1)i}(V) }
$$
where $\Ch^*:=\CH^*/p$ and $S^i$ denotes the $i$-th Steenrod operation
in the Chow theory (see \cite{Br03}).
In particular, since $S^i\vert_{\Ch^m(V)}=0$ for $i>m$, 
the image $$\pr\circ S^i_{L-N}(\Omega^m(V))$$ is divisible by $p$ for such $m$.
Moreover, the total operation $S_{L-N}:=\sum_iS^i_{L-N}$ is a ring
homomorphism (see \cite{Za08}, \cite{Vi07}, and \cite{LM}).

For an integer $n$ let $I(p,n)$ denote the ideal of $\laz$ 
generated by the varieties of dimension $\le p^n-1$
with all characteristic numbers divisible by $p$.
This is a prime ideal invariant under the action of the
Landweber--Novikov operations on $\laz$.
By definition $I(p,n-1)\subset I(p,n)$.
Moreover, as an $\laz$-ideal $I(p,n)$ is generated by $I(p,n-1)$
and by any $\nu_n$-variety. Besides, $I(p,n-1)$ does not contain $\nu_n$-varieties
(see \cite[Section~11]{Me02}, \cite[Section~3]{ViYa07}, and \cite[Section~2]{La73}).
\end{ntt}

The following proposition seems to be known in algebraic topology.
The existence of Landweber--Novikov operations in the algebraic cobordism theory
of Levine--Morel allows to prove it in the context of algebraic geometry.
This proposition appears as conjecture~1 before Thm.~6.3 in \cite{Vo03}.

\begin{prop}\label{voeconj}
A $\nu_n$-variety is a $\nu_{\le n}$-variety.
\end{prop}
\begin{proof}
(A.~Vishik)
It is well known that a smooth hypersurface of degree $p$ in
$\pp^{p^n}$ is a $\nu_n$-variety (see \cite[Prop.~3.6]{Vo01} and Example~\ref{cobexa}).
Denote its class in $\laz$ as $u$.

There are the following exact sequences of vector bundles
$$0\to T_u\to \iota^*(T_{\mathbb{P}^{p^n}})\to \iota^*(\mathcal{O}(p))\to 0\text{ and}$$
$$0\to\mathcal{O}\to \mathcal{O}(1)^{p^n+1}\to T_{\mathbb{P}^{p^n}}\to 0,$$
where $T_u$ is the tangent bundle of $u$ and $\iota\colon u\hookrightarrow\mathbb{P}^{p^n}$.

Therefore $c^{\Omega}(-T_u)=\dfrac{1+[p]h}{(1+h)^{p^n+1}}$, where $h$ is a hyperplane section of $u$
and $$[p]h=h+_\Omega +\ldots+_\Omega h=ph+\text{decomposable terms}$$ is the sum with respect to the cobordism formal
group law. A direct computation
shows now that $S_{L-N}^{p^{n-1}}(u)$ modulo $I(p,n-2)$ is a multiple of the class of
a hypersurface of degree $p$ in $\pp^{p^{n-1}}$ with a coefficient not
divisible by $p$. In particular, $S_{L-N}^{p^{n-1}}(u)$ is a
$\nu_{n-1}$-element.

Let now $v_n$ be a $\nu_n$-element in $\laz$. Then
$y:=av_n-u\in I(p,n-1)$ for some $a\in\zz$ coprime to $p$.
To conclude that $v_n$ is a $\nu_{n-1}$-element, it suffices
to show that $S_{L-N}^{p^{n-1}}(y)\in I(p,n-2)$.
We can write $$y=v_{n-1}x_{n-1}+v_{n-2}x_{n-2}+\ldots+v_0x_0$$
where $v_i$ is a $\nu_i$-element and $x_i\in\laz$.
Since the ideal $I(p,n-2)$ is stable under the Landweber--Novikov operations, we have that
$$S_{L-N}^{p^{n-1}}(v_{n-2}x_{n-2}+\ldots+v_0x_0)\in I(p,n-2).$$

Moreover, $S_{L-N}^i(v_{n-1})\in I(p,n-2)$ for all $i>0$.
Finally, we have the term $$S_{L-N}^0(v_{n-1})S_{L-N}^{p^{n-1}}(x_{n-1})=
v_{n-1}S_{L-N}^{p^{n-1}}(x_{n-1}).$$
But $S_{L-N}^{p^{n-1}}(x_{n-1})$ is divisible by $p$, since $x_{n-1}\in\laz^0$ and
this Landweber--Novikov operation induces the Steenrod operation
$S^{p^{n-1}}$ on the Chow group.
Thus, $$S_{L-N}^{p^{n-1}}(y)\in I(p,n-2),$$ and we are done.
\end{proof}

\section{Category DM}\label{sec3}
The base field $k$ is assumed to have characteristic zero, and
all varieties are assumed to be irreducible.

If $A\to B$ is a morphism in a triangulated category, then there
exists a unique up to isomorphism object $C$ with an exact triangle
of the form $A\to B\to C$. We write $C=\Cone(A\to B)$.

\begin{dfn}
Let $X$ be a smooth projective variety over $k$. As $\check C(X)$ we denote the {\it standard simplicial scheme}
associated with $X$:
$$X\llarrow X\times X\lllarrow X\times X\times X\cdots$$
\end{dfn}

In the present article we work in the category $\DM:=\DM^{eff}_{-}(k)$
of effective motives of V.~Voevodsky over $k$ with $p$-adic coefficients $\zz_p$,
where $p$ is a fixed prime number (see \cite{MVW06}, \cite{Vo95}, \cite{Vo03}).
In the same way we could work with $\zz_{(p)}$-coefficients, but this would make the proofs
of some statements (e.g. of Lemma~\ref{lift}) technically more involved.

It is known that there
are functors from the category of smooth schemes over $k$ and from the
category of smooth simplicial schemes over $k$ to $\DM$. Therefore we can speak
about the motives of smooth schemes and smooth simplicial schemes.

The category of classical Grothendieck's Chow motives is a full subcategory
of $\DM$ closed under direct summands. Its objects are called {\it pure motives}.
For an integer $b$ we set $\{b\}:=(b)[2b]$. For example, if $M\in\DM$,
then $M\{b\}[1]$ means $M(b)[2b+1]$ and $M\{b\}$ is a {\it Tate twist}
of $M$. We have an exact triangle:
\begin{align}\label{zp}
\zz_{p}\xrightarrow{p}\zz_{p}\to\zz/p\to\zz_{p}[1],
\end{align}
where $\zz_{p}$ is the motive of the base point.
For $M\in\DM$ its {\it motivic
cohomology} with $\zz/p$-coefficients is given by $$H^{d,c}(M,\zz/p)=\Hom(M,\zz/p(c)[d]).$$

We list now some properties of the category $\DM$.

\begin{lem}[{\cite[Cor.~19.2 and Thm.~5.1]{MVW06}}]\label{puremotive}
Let $X$ be a smooth projective variety over $k$.
Then
$$\Hom(X,\zz/p(i)[2i])=\CH^i(X)\otimes\zz/p=:\Ch^i(X)$$ is the Chow group of $X$.
If $X=\Spec k$, then additionally $$\Hom(X,\zz/p(i)[i])=K^M_i(k)/p$$
is the Milnor $K$-theory of $k$ mod $p$.
\end{lem}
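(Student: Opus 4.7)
The plan is to identify each Hom-group in $\DM$ with a known invariant by appealing to Voevodsky's comparison theorems, since the lemma is a compilation of standard identifications rather than a new statement to be established from scratch.

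First I would invoke the core comparison result: for a smooth scheme $X$ over a perfect field, the Hom-groups $\Hom_{\DM}(M(X),\zz/p(i)[j])$ coincide with Bloch's higher Chow groups $\CH^i(X,2i-j)\otimes\zz/p$. This is exactly the content of Proposition~4.2.9 of \cite{Vo95} (the identification passes through motivic cohomology defined via Suslin complexes of equidimensional cycles, and the comparison with Bloch's complex is carried out in loc.\ cit.). The indexing convention $\Hom(M,\zz/p(i)[j])=H^{j,i}(M,\zz/p)$ combined with $H^{j,i}(X,\zz/p)=\CH^i(X,2i-j;\zz/p)$ gives the first assertion directly.

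Next, the second assertion is a specialization: taking $j=2i$ forces the simplicial parameter $2i-j$ to vanish, and by Bloch's definition $\CH^i(X,0)=\CH^i(X)$. Reducing mod $p$ yields $\Ch^i(X)$.

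For the third assertion, with $X=\Spec k$ and $j=i$, the first part reduces the statement to the isomorphism $\CH^i(\Spec k,i)\cong K_i^M(k)$. This is the Nesterenko--Suslin--Totaro theorem, which identifies the diagonal piece of Bloch's higher Chow groups of a field with Milnor $K$-theory; reducing mod $p$ then gives $K_i^M(k)/p$. (Alternatively one can cite Proposition~2.7 of \cite{Vo95b}, where Voevodsky records this identification in the $\DM$ formalism, as the statement of the lemma already does.)

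There is no genuine obstacle to overcome here beyond assembling the three citations correctly; the deep work is Voevodsky's comparison between $\DM$-morphisms into Tate objects and higher Chow groups, together with the Nesterenko--Suslin--Totaro calculation of $\CH^*(\Spec k,*)$ along the diagonal. The only real care needed is to check that the bidegree conventions $\{b\}=(b)[2b]$ and $H^{d,c}(M,\zz/p)=\Hom(M,\zz/p(c)[d])$ fixed in the paper match those in the cited references, so that the translation $H^{j,i}\leftrightarrow\CH^i(-,2i-j)$ is applied with the correct sign of the shift.
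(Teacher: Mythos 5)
Your proposal is correct and matches the paper's treatment: the paper states this lemma as a recollection of Voevodsky's results with no proof of its own, and your assembly of the three identifications (Voevodsky's comparison of $\Hom_{\DM}$-groups into Tate twists with Bloch's higher Chow groups, the specialization at $j=2i$, and the Nesterenko--Suslin--Totaro identification along the diagonal, recorded in \cite[Proposition~2.7]{Vo95b}) is exactly the intended content. The only remark worth adding is that the paper works in characteristic zero, so the perfect-field hypothesis you invoke for the comparison theorem is automatically satisfied.
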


We remark that these statements are given in \cite{MVW06} integrally. The mod-$p$ version follows
from the integral version, from exact triangle~\eqref{zp} and from the facts that
$$\Hom(X,\zz_{p}(i)[2i+1])=0 \text{ and }\Hom(\Spec k,\zz_{p}(i)[i+1])=0$$ for all $i\ge 0$
(see \cite[Thm.~3.6 and Thm.~19.3]{MVW06}).

\begin{lem}[{\cite[Theorem~2.3.4]{Vi98}, \cite[App.~B]{Vo01}}]\label{lemma33}
Let $X$ and $Y$ be smooth projective irreducible varieties over $k$ and let $\mathcal{X}_X$ (resp. $\mathcal{X}_Y$) be
the motive of $\check C(X)$ (resp. of $\check C(Y)$). The natural morphism
$\mathcal{X}_X\to\Spec k$ is an isomorphism if and only if $X$ has a zero-cycle
of degree coprime to $p$.

The motives $\mathcal{X}_X$ and $\mathcal{X}_Y$ are isomorphic if and only if
$X$ has a zero-cycle of degree coprime to $p$ over $k(Y)$ and $Y$ has a zero-cycle of degree coprime to $p$ over $k(X)$.
\end{lem}

\begin{lem}[{\cite[Thm.~3.6 and Thm.~19.3]{MVW06}}]\label{zerohom}
Let $X$ be a smooth projective variety over $k$.
Then $$\Hom(X,\zz/p(c)[d])=0$$ for $d-c>\dim X$ and for $d>2c$. The same formula holds for motivic
cohomology with integral coefficients.
\end{lem}
We remark that again this mod-$p$ version follows from the integral version given in \cite{MVW06} and from
exact triangle~\eqref{zp}.

\begin{lem}[Cancellation; {\cite[Corollary~4.10]{Vo02}}]\label{cancellation}
Let $A,B\in\DM$.
Then for all $i,j\ge 0$, we have $$\Hom(A,B)=\Hom(A(i)[j],B(i)[j]).$$
\end{lem}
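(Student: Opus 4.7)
The plan is to reduce the assertion to cancellation of a single Tate twist, and then to invoke the internal Hom in $\DM$. Since $[1]$ is an autoequivalence of any triangulated category we have $\Hom(A[j],B[j])=\Hom(A,B)$ automatically, so it suffices to prove $\Hom(A,B)=\Hom(A(1),B(1))$ and then iterate $i$ times to handle the $(i)$ twist.

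The content of cancelling one Tate twist is that the functor $-\otimes\zz/p(1)\colon\DM\to\DM$ is fully faithful. I would phrase this via the internal Hom of $\DM$: setting $B':=\underline{\Hom}(\zz/p(1),\,B(1))$, the unit of adjunction gives a natural morphism $B\to B'$, and bijectivity of $\Hom(A,B)\to\Hom(A,B')$ for every $A$ is exactly what we want. Since $\DM$ is generated by the motives of smooth schemes $M(X)$, it suffices to check that $B\to B'$ is an isomorphism for $B=M(X)$.

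To perform this computation I would use the identification $\zz/p(1)\simeq \mathbb{G}_m[-1]\otimes\zz/p$ of Suslin--Voevodsky. Then $\underline{\Hom}(\zz/p(1),\,B(1))$ is a shift of $\underline{\Hom}(\mathbb{G}_m,\,B(1))$, and the latter internal Hom is represented, at the level of homotopy invariant Nisnevich sheaves with transfers, by the contraction functor $F\mapsto F_{-1}$ where $F_{-1}(U)=\ker\bigl(F(U\times\mathbb{G}_m)\to F(U)\bigr)$. The desired isomorphism thus becomes a computation of $F_{-1}$ applied to the motivic complex associated with $M(X)(1)$.

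The main obstacle is precisely this last step: one must show that smashing with $\mathbb{G}_m$ followed by $F\mapsto F_{-1}$ is naturally isomorphic to the identity on homotopy invariant Nisnevich sheaves with transfers. This is a delicate statement and requires the full strength of Voevodsky's theory — homotopy invariance combined with Nisnevich descent and the transfer structure. I do not expect a shortcut; any proof essentially recapitulates Voevodsky's original cancellation argument, which is why the lemma is stated here as a citation rather than proved in situ.
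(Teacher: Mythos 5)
The paper states this lemma as a bare citation to Voevodsky's cancellation theorem \cite[Corollary~4.10]{Vo02} and supplies no proof, so there is no in-paper argument to compare against. Your sketch is a reasonable reconstruction of the logical architecture: reduce to cancelling a single Tate twist (shifts being automatic), reformulate full faithfulness of $-\otimes\zz/p(1)$ via the adjunction with $\underline{\Hom}(\zz/p(1),-)$, reduce by compact generation to $B=M(X)$, and push the computation down to homotopy invariant Nisnevich sheaves with transfers via the contraction $F\mapsto F_{-1}$. You also correctly isolate the genuine mathematical content --- the compatibility of $\mathbb{G}_m$-smashing with contraction --- and correctly conclude that this is the step which forces one to recapitulate Voevodsky's argument, hence the citation.

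Two caveats worth recording. First, the existence of $\underline{\Hom}(\zz/p(1),-)$ as an endofunctor of $\DM^{eff}_-$ is not free; it follows from compact generation and Brown representability, and since the whole point of the theorem is that this internal Hom behaves like a genuine inverse twist, one must be careful not to smuggle the conclusion into the setup. Second, Voevodsky's own proof does not pass through the internal Hom formalism at all: he constructs an explicit left inverse to the twist map directly at the level of finite correspondences (the key device being a family of correspondences on $X\times\mathbb{G}_m$ built from rational functions) and then shows it is also a right inverse. Your outline is therefore a paraphrase of what the theorem \emph{says} and where the difficulty lies, rather than of the actual proof mechanism. Neither point undermines your conclusion that the lemma is properly deferred to \cite{Vo02}.
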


\begin{dfn}
A localizing subcategory of $\DM$ generated by an object
$A\in\DM$ is a minimal triangulated subcategory closed under direct summands, arbitrary direct sums,
and containing $A$.
\end{dfn}

\begin{lem}[{\cite[Lemma~4.7]{Vo01}, \cite[Theorem~2.3.2]{Vi98}}]\label{localize}
Let $X$ be a smooth projective variety over $k$ and $M$ an object of
the localizing subcategory of $\DM$ generated by $X$. Denote
as $\mathcal{X}$ the motive of $\check C(X)$. Then
\begin{enumerate}
\item[(I)] the natural morphism $M\otimes\mathcal{X}\to M$ is an isomorphism,
\item[(II)] and the natural homomorphism $\Hom(M,\mathcal{X})\to\Hom(M,\zz_{p})$ is an isomorphism.
\end{enumerate}
\end{lem}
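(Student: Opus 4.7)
The plan proceeds in two stages: first establish (1) by a devissage argument, and then derive (2) from (1) combined with duality for smooth projective varieties in $\DM$.

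For (1), the starting point is the assertion that $\mathcal{X}\otimes X\to X$ is already an isomorphism. Indeed, $\mathcal{X}\otimes X$ is the motive of the simplicial scheme $\check C(X)\times_k X$. Viewed as a simplicial $X$-scheme via the second projection, this admits a canonical section from the diagonal $X\to X\times_k X$, yielding the standard simplicial contracting homotopy onto the constant simplicial scheme $X$. Hence $\mathcal{X}\otimes X\xra{\sim} X$ in $\DM$. Now let $\mathcal{C}$ denote the full subcategory of objects $N\in\DM$ for which the natural map $N\otimes\mathcal{X}\to N$ is an isomorphism. Since $-\otimes\mathcal{X}$ is an exact triangulated functor commuting with arbitrary coproducts, Tate twists, tensor products, and direct summands, and since the structural map is a natural transformation, a 5-lemma argument using Lemma~\ref{longseq} shows that $\mathcal{C}$ is a localizing subcategory of $\DM$. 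Since $X\in\mathcal{C}$, the entire localizing subcategory generated by $X$ lies in $\mathcal{C}$, proving (1).

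For (2), complete $\mathcal{X}\to\zz/p$ to an exact triangle
\[
\tilde{\mathcal{X}}\to\mathcal{X}\to\zz/p\to\tilde{\mathcal{X}}[1].
\]
By Lemma~\ref{longseq}, the desired isomorphism reduces to the vanishing
$\Hom(M,\tilde{\mathcal{X}}\{i\}[j])=0$ for all $i\ge 0$ and $j\in\{0,1\}$. The class of objects satisfying this vanishing for all $i\ge 0$ and all $j\in\zz$ is again a localizing subcategory (with closure under Tate twists handled by Lemma~\ref{cancellation}), so it suffices to verify the vanishing for $N=X$. Using that $X$ is strongly dualizable in $\DM$ with dual $X\{-d\}$, where $d=\dim X$, one rewrites
\[
\Hom(X,\tilde{\mathcal{X}}\{i\}[j])=\Hom(\zz/p,X\otimes\tilde{\mathcal{X}}\{i-d\}[j-2d]).
\]
Tensoring the triangle above with $X$ and invoking (1) identifies $X\otimes\tilde{\mathcal{X}}$ as the fiber of the isomorphism $X\otimes\mathcal{X}\xra{\sim} X$, hence it is zero, giving the required vanishing.

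The main obstacle I anticipate is handling the duality inside the effective subcategory $\DM^{eff}_-$, where the twist $\{i-d\}$ may have negative Tate component. One resolves this via the Cancellation Theorem (Lemma~\ref{cancellation}) to pass to the ambient category with inverted Tate twist, where smooth projective varieties are strongly dualizable in the standard sense, and then transport the vanishing back. Once this technicality is settled, the argument crystallizes around the clean input $X\otimes\tilde{\mathcal{X}}=0$ provided by part (1).
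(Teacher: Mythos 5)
The paper records this lemma purely as a citation to Voevodsky and Vishik and gives no internal proof, so there is nothing to compare against line by line; I evaluate your argument directly. Your treatment of part (1) is correct: the diagonal section of the second projection $X\times_k X\to X$ gives an extra degeneracy on $\check C(X)\times_k X$, making the augmentation to the constant simplicial $X$-scheme a simplicial homotopy equivalence, so $\mathcal{X}\otimes X\to X$ is an isomorphism; and the class of $N$ with $N\otimes\mathcal{X}\xra{\sim}N$ genuinely is a localizing subcategory, each closure property being immediate because the defining condition is intrinsic to $N$.

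Part (2) contains a real gap. You assert that the class of $N$ with $\Hom(N,\tilde{\mathcal{X}}\{i\}[j])=0$ for all $i\ge 0$, $j\in\zz$ ``is again a localizing subcategory (with closure under Tate twists handled by Lemma~\ref{cancellation})'' and then reduce to $N=X$. Cancellation identifies $\Hom(N\{1\},\tilde{\mathcal{X}}\{i\}[j])$ with $\Hom(N,\tilde{\mathcal{X}}\{i-1\}[j])$ only when $i\ge 1$; for $i=0$ the target $\tilde{\mathcal{X}}\{-1\}$ leaves $\DM^{eff}_-$ and the needed vanishing is not among your hypotheses on $N$. Closure under tensor products is not addressed at all, and a Hom-vanishing condition is not closed under $\otimes$ for formal reasons. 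So the reduction to the single object $N=X$ is not justified as written.

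The fix is small and stays within your own framework. The localizing subcategory generated by $X$ coincides with the smallest triangulated subcategory closed under direct summands and arbitrary direct sums that contains the compact objects $X^{\otimes m}\{l\}$, $m\ge 1$, $l\ge 0$; one checks directly that this hull is already stable under $\otimes$ and $\{1\}$ because the generating family is. The Hom-vanishing class is closed under triangles, summands and direct sums (this much is formal), so it suffices to check the vanishing on each $X^{\otimes m}\{l\}$. Each such object is strongly dualizable in the ambient category with Tate twist inverted, with dual $X^{\otimes m}\{-md-l\}$, and the same duality computation reduces everything to $X^{\otimes m}\otimes\tilde{\mathcal{X}}=0$, which follows from $X\otimes\tilde{\mathcal{X}}=0$. (A minor typo: the shift in your duality display should read $[j]$, not $[j-2d]$, i.e. $\Hom(X,\tilde{\mathcal{X}}\{i\}[j])=\Hom(\zz/p,X\otimes\tilde{\mathcal{X}}\{i-d\}[j])$; this does not affect the conclusion.) With this adjustment the argument is sound and consistent with the strategy of the cited references.
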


In $\DM$ there is an action of the {\it Milnor operations} 
for smooth simplicial schemes $U$
$$Q_i\colon\Hom(U,\zz/p(r)[s])\to\Hom(U,\zz/p(r+p^i-1)[s+2p^i-1]).$$
In the same way there is an action of the Milnor operations on the
{\it reduced motivic cohomology} $\widetilde H^{*',*}(U,\zz/p)$
for smooth pointed simplicial schemes $U$. They
satisfy the following properties:

\begin{lem}[{\cite[Section~13]{Vo03r} and \cite[Thm.~3.17]{Vo96}}]\label{milnormulti}
Let $U$ be a smooth pointed simplicial scheme $u,v\in\widetilde H^{*',*}(U,\zz/p)$.
Then 
\begin{enumerate}
\item[(I)] $Q_i^2=0$;
\item[(II)] $Q_iQ_j+Q_jQ_i=0$;
\item[(III)] $Q_i(uv)=Q_i(u)v+uQ_i(v)+\sum\xi^{n_j}\varphi_j(u)\psi_j(v)$,
where $n_j>0$, $\varphi_j$ and $\psi_j$ are some homogeneous
cohomological operations of bidegrees $(*',*)$ with $*'>2*\ge 0$, and $\xi$ is the class
of $-1$ in $\Hom(\Spec k,\zz/p(1)[1])=k^\times/k^{\times p}$. (In particular,
$\xi=0$ if $p$ is odd).
\end{enumerate}
\end{lem}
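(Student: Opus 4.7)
My plan is to construct the Milnor operations as specific elements of the motivic Steenrod algebra following Voevodsky's framework. Set $Q_0=\beta$, the motivic Bockstein, and define the higher $Q_i$ inductively by the graded commutator $Q_i=[P^{p^{i-1}},Q_{i-1}]$, where $P^k$ denotes the motivic reduced power operation of bidegree $(k(p-1),2k(p-1))$. The bidegrees quoted in the lemma follow by summing those of $\beta$ and the $P^{p^j}$. Since motivic cohomology of (pointed) smooth simplicial schemes admits a natural action of the full motivic Steenrod algebra, this defines $Q_i$ on $H^{*',*}(U,\zz/p)$ and on the reduced version $\widetilde H^{*',*}(U,\zz/p)$.

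For properties (1) and (2) I would invoke the structure of the dual motivic Steenrod algebra: Voevodsky shows that, in analogy with Milnor's classical computation, the duals $\tau_i$ of the $Q_i$ generate an exterior subalgebra of the dual Hopf algebra. Dualizing gives $Q_i^2=0$ and $Q_iQ_j+Q_jQ_i=0$. Alternatively, one verifies these identities inductively from the commutator definition, using $\beta^2=0$, the motivic Adem relations, and the fact that each $Q_i$ has odd topological degree $2p^i-1$, hence behaves like an exterior generator.

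For property (3), the key point is the coproduct on the motivic Steenrod algebra. In topology the $Q_i$ are primitive, giving the clean Cartan formula $Q_i(uv)=Q_i(u)v+uQ_i(v)$. In the motivic setting the comultiplication is deformed because $H^{*,*}(\Spec k,\zz/p)$ has nontrivial elements in positive bidegree (namely powers of $\xi$, the class of $-1$ in $k^\times/k^{\times p}$, living in bidegree $(1,1)$). Concretely one shows
\[
\Delta(Q_i)=Q_i\otimes 1+1\otimes Q_i+\sum\xi^{n_j}\varphi_j\otimes\psi_j
\]
with $n_j>0$ and with each $\varphi_j,\psi_j$ of bidegree $(*',*)$ satisfying $*'>2*\ge 0$. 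Applying $\Delta(Q_i)$ to the external product $u\otimes v$ and pulling back along the diagonal yields the claimed formula. When $p$ is odd, $-1$ is already a $p$-th power up to sign in every field, so $\xi=0$ and the correction terms drop out.

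The hard part is justifying the precise shape of the deformed coproduct: proving $n_j>0$ and controlling the bidegrees of the auxiliary operations $\varphi_j,\psi_j$. This uses the explicit description of the motivic Steenrod algebra, its weight grading, and the comparison with the topological Steenrod algebra that reduces ``modulo $\xi$'' to the classical primitivity statement; these are the main technical inputs from \cite{Vo96}.
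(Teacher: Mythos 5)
The paper gives no proof of this lemma; it is stated with a citation to \cite[Theorem~3.17]{Vo96} and used as a black box, so there is no internal argument to compare your sketch against. Your outline does correctly reflect the structure of Voevodsky's proof: define $Q_0=\beta$ and $Q_i=[P^{p^{i-1}},Q_{i-1}]$; deduce (1) and (2) from the fact that the duals $\tau_i$ generate an exterior subalgebra of the dual motivic Steenrod algebra; deduce (3) from the deformed coproduct $\Delta(Q_i)=Q_i\otimes 1+1\otimes Q_i+\sum\xi^{n_j}\varphi_j\otimes\psi_j$ with $\xi$ the class of $-1$, which collapses to the primitive form when $p$ is odd. But as you acknowledge, the crux of the matter --- the exact shape of the deformed coproduct, the positivity $n_j>0$, and the bidegree constraints $*'>2*\ge 0$ on the $\varphi_j,\psi_j$ --- is delegated back to \cite{Vo96}, so what you have is an organized restatement of the cited theorem rather than a self-contained proof; that is entirely consistent with how the paper uses it. One small slip worth flagging: in the paper's convention a bidegree $(*',*)$ puts the cohomological degree first, i.e.\ $H^{*',*}(U,\zz/p)=\Hom(U,\zz/p(*)[*'])$, so the reduced power $P^k$ has bidegree $(2k(p-1),k(p-1))$, not $(k(p-1),2k(p-1))$ as you wrote.
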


Thus, the following sequence is a complex:
\begin{equation}\label{margolis}
\widetilde H^{*-2p^i+1,*'-p^i+1}(U,\zz/p)\overset{Q_i}{\to}\widetilde H^{*,*'}(U,\zz/p)
\overset{Q_i}{\to}\widetilde H^{*+2p^i-1,*'+p^i-1}(U,\zz/p).
\end{equation}

\begin{dfn}
The cohomology groups of complex~\eqref{margolis} are called {\it Margolis motivic
cohomology groups} and are denoted as $\widetilde{HM}_i^{*,*'}(U)$.
\end{dfn}

The following lemma is of particular importance for us.
\begin{lem}\label{margolislemma}
Let $X$ be a smooth projective variety over $k$, $Y$ a $\nu_n$-variety over $k$,
and let $\mathcal{X}$ be the motive of $\check C(X)$. Denote as
$\widetilde{\mathcal{X}}=\Cone(\mathcal{X}\to\Spec k)$ the cone
of the natural projection. 

Assume that there exists a map $Y\to X$.
Then for all $0\le i\le n$ and all $*,*'\in\zz$
$$\widetilde{HM}_i^{*,*'}(\widetilde{\mathcal{X}})=0.$$
\end{lem}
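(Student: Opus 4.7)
The plan is to use the Voevodsky conjecture (Lemma~\ref{voeconj}) to reduce the vanishing of each $\widetilde{HM}_i^{*,*'}$ to a $Q_i$-computation on a companion $\nu_i$-variety, where the defining numerical property $s_{p^i-1} \not\equiv 0 \pmod{p^2}$ can finally be brought into play.

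First I would use Lemma~\ref{voeconj} to upgrade $X$ to a $\nu_{\le n}$-variety: for each $0 \le i \le n$ pick a $\nu_i$-variety $Y_i$ together with a morphism $f_i\colon Y_i \to X$. This produces a morphism $\check C(Y_i) \to \check C(X)$ of standard simplicial schemes, hence a morphism $\mathcal{Y}_i \to \mathcal{X}$ in $\DM$ commuting with the natural projections to $\zz/p$. Applying the octahedral axiom to the pair $\mathcal{Y}_i \to \mathcal{X} \to \zz/p$ yields a compatible morphism of exact triangles, which in particular gives a morphism $\widetilde{\mathcal{X}} \to \widetilde{\mathcal{Y}_i}$ on the reduced cones and therefore a pullback $f_i^*$ on their reduced cohomology that commutes with the Milnor operation $Q_i$.

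Next, starting from a class $\alpha \in \widetilde{H}^{*,*'}(\widetilde{\mathcal{X}})$ with $Q_i\alpha = 0$, I would transfer the problem along the triangle $\mathcal{X} \to \zz/p \to \widetilde{\mathcal{X}}$ and the long exact sequence of Lemma~\ref{longseq}: outside the narrow range where $H^{*,*'}(\zz/p)$ is nonzero, $\alpha$ lifts uniquely to a class in $H^{*-1,*'}(\mathcal{X})$, and the $Q_i$-cocycle/boundary question becomes a corresponding question about $Q_i$ on the motivic cohomology of $\mathcal{X}$. Using Lemma~\ref{localize} to identify $M \otimes \mathcal{X} \simeq M$ and Lemma~\ref{cancellation} to control twists, and then pulling back through $f_i$, the question is finally translated to showing that the analogous Margolis cohomology $\widetilde{HM}_i^{*,*'}(\widetilde{\mathcal{Y}_i})$ vanishes for the $\nu_i$-variety $Y_i$.

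The last step is the main obstacle and the only place where the numerical $\nu$-structure is actually used: for $Y_i$ a $\nu_i$-variety one has to construct, by hand, a $Q_i$-preimage of any $Q_i$-cocycle. The input is the nonvanishing mod $p^2$ of the Milnor class $s_{p^i-1}(Y_i)$, together with the Landweber--Novikov calculation carried out in the proof of Lemma~\ref{voeconj} (which shows how $S_{L-N}^{p^{i-1}}$ detects the $\nu_i$-class), and the Cartan formula of Lemma~\ref{milnormulti}; these combine to produce an explicit class in $\widetilde{H}^{2p^i-1,\,p^i-1}(\widetilde{\mathcal{Y}_i})$ whose $Q_i$-image generates the obstruction group, forcing every $Q_i$-cocycle to be a boundary. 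This part mirrors, in generalized form, Voevodsky's Pfister-quadric argument at the prime $2$.
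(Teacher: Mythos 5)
The paper's own proof is a one-line citation: the result follows by combining Lemma~\ref{voeconj} (Vishik's proof of the Voevodsky conjecture, so that $X$ is $\nu_{\le n}$) with Voevodsky's Lemma~4.3 from \cite{Vo03}, which asserts exactly this Margolis vanishing for $\nu_{\le n}$-varieties. Your first step---invoking Lemma~\ref{voeconj} to produce $\nu_i$-varieties $Y_i\to X$ for $0\le i\le n$---matches the paper. But instead of citing Voevodsky's Lemma~4.3 at that point, you attempt to re-derive it, and the re-derivation has a genuine gap.

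The problematic step is the reduction from $\widetilde{\mathcal{X}}$ to $\widetilde{\mathcal{Y}_i}$. From $f_i\colon Y_i\to X$ the induced map of cones goes $\widetilde{\mathcal{Y}_i}\to\widetilde{\mathcal{X}}$ (not the direction you write, though the cohomological restriction $\widetilde{H}^{*,*'}(\widetilde{\mathcal{X}})\to\widetilde{H}^{*,*'}(\widetilde{\mathcal{Y}_i})$ does exist). The point, however, is that a restriction map cannot convert a Margolis-vanishing statement for $\widetilde{\mathcal{Y}_i}$ into one for $\widetilde{\mathcal{X}}$: if $\alpha$ is a $Q_i$-cocycle on $\widetilde{\mathcal{X}}$ and its restriction $f_i^*\alpha$ is a $Q_i$-boundary on $\widetilde{\mathcal{Y}_i}$, there is no mechanism in your sketch to lift the bounding class back to $\widetilde{\mathcal{X}}$. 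Voevodsky's argument does not proceed by this kind of naive pullback; it uses the morphism $Y_i\to X$ to build a transfer/duality structure on $\widetilde{\mathcal{X}}$ itself (producing a $Q_i$-contracting homotopy there), and that is the substantive input you are missing. Your final paragraph---constructing a generator of the obstruction group from $s_{p^i-1}(Y_i)\not\equiv 0\bmod p^2$ and a Landweber--Novikov computation---is also left as an assertion with no mechanism for how it forces the relevant acyclicity on $\widetilde{\mathcal{X}}$. In short: the first reduction step is correct and coincides with the paper; the attempted replacement for \cite[Lemma~4.3]{Vo03} does not go through as written.
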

\begin{proof}
The lemma immediately follows from \cite[Lemma~4.3]{Vo03} and Prop.~\ref{voeconj}.
\end{proof}

The following lemma immediately follows from exact triangle~\eqref{zp} and from the fact that the complex $\zz_{p}(c)=0$
for $c<0$.
\begin{lem}\label{ll}
We have $\Hom(\mathcal{X},\zz/p(c)[d])=0$ if $c<0$. The same formula holds with integral coefficients.
\end{lem}

The following lemma follows from the proof of Sublemma~6.2 of \cite{IzhV00}. Sublemma~6.2 in \cite{IzhV00}
is about quadrics, but the same proof works for any smooth projective irreducible variety.
\begin{lem}[{\cite[Proof of Sublemma~6.2]{IzhV00}}]\label{izhvishik}
Let $X$ be a smooth projective irreducible variety, let $\mathcal{X}$ be the motive of $\check C(X)$, and
$\mu'\in\Hom(\mathcal{X},\mathcal{X}\{b\}[1])$ for some $b$.

Let $\mu\in\Hom(\mathcal{X},\zz_p\{b\}[1])$ be the image of $\mu'$ under the natural map of
Lemma~\ref{localize}(II).

Then for all $c,e$ the pull-back
$$(\mu')^*\colon\Hom(\mathcal{X},\zz_p(c)[e])\to\Hom(\mathcal{X},\zz_p(c+b)[e+2b+1])$$
coincides with the multiplication by $\mu$.
\end{lem}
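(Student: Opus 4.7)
The plan is to rewrite both sides via the diagonal description of cup product and reduce the identity to a naturality statement powered by Lemma~\ref{localize}(1). Recall that the cup product in motivic cohomology factors through the diagonal: $\mu\cdot\alpha=(\mu\otimes\alpha)\circ\Delta_{\mathcal{X}}$, where $\Delta_{\mathcal{X}}\colon\mathcal{X}\to\mathcal{X}\otimes\mathcal{X}$ is the diagonal of $\check C(X)$. Letting $\pi\colon\mathcal{X}\to\zz/p$ denote the canonical projection, the hypothesis reads $\mu=(\pi\{b\}[1])\circ\mu'$, and bifunctoriality of $\otimes$ gives $\mu\otimes\alpha=((\pi\{b\}[1])\otimes\alpha)\circ(\mu'\otimes\id_{\mathcal{X}})$.

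The decisive input is Lemma~\ref{localize}(1): for any $M$ in the localizing subcategory generated by $X$, the natural morphism $\phi_M:=\id_M\otimes\pi\colon M\otimes\mathcal{X}\to M$ is an isomorphism---in particular for $M=\mathcal{X}$ and $M=\mathcal{X}\{b\}[1]$. Since $(\id\otimes\pi)\circ\Delta_{\mathcal{X}}=\id_{\mathcal{X}}$, the diagonal $\Delta_{\mathcal{X}}$ is automatically $\phi_{\mathcal{X}}^{-1}$. Naturality of $\phi$ applied to $\mu'$ yields $(\mu'\otimes\id_{\mathcal{X}})\circ\Delta_{\mathcal{X}}=\phi_{\mathcal{X}\{b\}[1]}^{-1}\circ\mu'$, so that
$$\mu\cdot\alpha=\bigl((\pi\{b\}[1])\otimes\alpha\bigr)\circ\phi_{\mathcal{X}\{b\}[1]}^{-1}\circ\mu'.$$

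It then remains to identify $((\pi\{b\}[1])\otimes\alpha)\circ\phi_{\mathcal{X}\{b\}[1]}^{-1}$ with $\alpha\{b\}[1]$. Under the natural isomorphism $\mathcal{X}\{b\}[1]\otimes\mathcal{X}\cong(\mathcal{X}\otimes\mathcal{X})\{b\}[1]$ (and the analogous one for the targets), the map $\phi_{\mathcal{X}\{b\}[1]}^{-1}$ corresponds to $\Delta_{\mathcal{X}}\{b\}[1]$ and $(\pi\{b\}[1])\otimes\alpha$ corresponds to $(\pi\otimes\alpha)\{b\}[1]$; their composite therefore equals $((\pi\otimes\alpha)\circ\Delta_{\mathcal{X}})\{b\}[1]=(\pi\smile\alpha)\{b\}[1]=\alpha\{b\}[1]$, where the last step uses that $\pi$ is the multiplicative unit of $H^{*,*}(\mathcal{X},\zz/p)$. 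This gives $\mu\cdot\alpha=\alpha\{b\}[1]\circ\mu'=(\mu')^{*}\alpha$, as required.

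The main obstacle, modest as it is, lies in bookkeeping with the unit, associativity, and Tate-twist identifications in the symmetric monoidal structure of $\DM$; the conceptual content is entirely supplied by Lemma~\ref{localize}(1), which makes $\phi_{\mathcal{X}}$ invertible and thereby converts cup product with $\mu$ into precomposition with its lift $\mu'$.
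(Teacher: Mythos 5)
The paper does not actually supply a proof of this lemma; it simply cites \cite[Proof of Sublemma~6.2]{IzhV00} and moves on. So there is no in-text argument to compare against, only the reference. Your proposed proof is nonetheless a complete and correct argument, and it is the argument one would expect the cited source to be giving (in essence): cup product on $H^{*,*}(\mathcal{X},\zz/p)$ is precomposition of $\mu\otimes\alpha$ with the comultiplication $\Delta_{\mathcal{X}}$, Lemma~\ref{localize}(1) shows that $\phi_{\mathcal{X}}=\id_{\mathcal{X}}\otimes\pi$ is invertible with inverse $\Delta_{\mathcal{X}}$ (because $\phi_{\mathcal{X}}\circ\Delta_{\mathcal{X}}=\id$, and a one-sided inverse of an isomorphism is the inverse), naturality of $\phi$ transports $\mu'\otimes\id_{\mathcal{X}}$ across the isomorphism, and the leftover factor $((\pi\{b\}[1])\otimes\alpha)\circ\phi_{\mathcal{X}\{b\}[1]}^{-1}$ collapses to $\alpha\{b\}[1]$ because $\pi$ is the unit. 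All of these steps check out. The only place you gloss over detail is the final identification of $((\pi\{b\}[1])\otimes\alpha)\circ\phi_{\mathcal{X}\{b\}[1]}^{-1}$ with $\alpha\{b\}[1]$; you flag the needed monoidal bookkeeping, and indeed passing $\{b\}[1]$ through the tensor/twist compatibility isomorphisms and invoking $(\pi\otimes\alpha)\circ\Delta_{\mathcal{X}}=\pi\cdot\alpha=\alpha$ does give exactly that. In short, your proof is correct, and it formalizes the comonoid-idempotent mechanism that the paper invokes only by citation.
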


The next two lemmas follow from the (proven) Bloch--Kato Conjecture.

\begin{lem}[{\cite[Lemma~6.6]{Vo03}}]\label{beilinson2}
Let $X$ be a smooth projective irreducible variety over $k$,
let $\mathcal{X}$ be the motive of $\check C(X)$, and
$\widetilde{\mathcal{X}}=\Cone(\mathcal{X}\to\Spec k)$.

Then $$\widetilde{H}^{d,c}(\widetilde{\mathcal{X}},\zz/p)=0$$
for all $d\le c+1$.
\end{lem}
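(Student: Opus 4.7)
The plan is to apply $\Hom(-,\zz/p(c)[d])$ to the exact triangle $\mathcal{X}\to\Spec k\to\widetilde{\mathcal{X}}\to\mathcal{X}[1]$ defining $\widetilde{\mathcal{X}}$, obtaining by Lemma~\ref{longseq} the long exact sequence
\[
\cdots\to H^{d-1,c}(\mathcal{X},\zz/p)\to\widetilde{H}^{d,c}(\widetilde{\mathcal{X}},\zz/p)\to H^{d,c}(\Spec k,\zz/p)\xrightarrow{\varphi_{d,c}}H^{d,c}(\mathcal{X},\zz/p)\to\cdots
\]
Since $H^{d,c}(\Spec k,\zz/p)=0$ for $d>c$, the map $\varphi_{c+1,c}$ is trivially injective. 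Hence the vanishing of $\widetilde{H}^{d,c}(\widetilde{\mathcal{X}},\zz/p)$ in the range $d\le c+1$ reduces to showing that $\varphi_{d,c}$ is an isomorphism for $d\le c$, i.e.\ that motivic cohomology with $\zz/p$-coefficients of $\Spec k$ and of the simplicial scheme $\check C(X)$ coincide in bidegrees $(d,c)$ with $d\le c$.

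For this I would invoke the Beilinson--Lichtenbaum property, a consequence of the Bloch--Kato conjecture in weight $c$: for any smooth (simplicial) scheme $Y$, the natural comparison map $H^{d,c}(Y,\zz/p)\to H^d_{\mathrm{\acute{e}t}}(Y,\mu_p^{\otimes c})$ is an isomorphism in the range $d\le c$. Applied to both $\Spec k$ and $\mathcal{X}$, this yields a commutative square
\[
\xymatrix{
H^{d,c}(\Spec k,\zz/p)\ar[r]^{\varphi_{d,c}}\ar[d] & H^{d,c}(\mathcal{X},\zz/p)\ar[d]\\
H^d_{\mathrm{\acute{e}t}}(\Spec k,\mu_p^{\otimes c})\ar[r] & H^d_{\mathrm{\acute{e}t}}(\mathcal{X},\mu_p^{\otimes c})
}
\]
in which both vertical arrows are isomorphisms whenever $d\le c$.

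The final ingredient is that the bottom horizontal arrow is always an isomorphism, i.e.\ that the \'etale cohomology of $\check C(X)$ coincides with the Galois cohomology of $k$. This is standard: after base change to $k_{\mathrm{sep}}$ the smooth projective variety $X$ acquires a rational point, so $\check C(X)_{k_{\mathrm{sep}}}\to\Spec k_{\mathrm{sep}}$ becomes a simplicial homotopy equivalence, and Galois descent then transports this \'etale-acyclicity back to $k$. With all three sides of the square controlled, a routine chase forces the top map $\varphi_{d,c}$ to be an isomorphism for $d\le c$, and plugging this into the long exact sequence finishes the proof.

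The only real obstacle is to assemble the Beilinson--Lichtenbaum statement for simplicial schemes together with the \'etale-acyclicity of $\check C(X)$; both inputs are by now standard, being developed in Voevodsky's foundational papers, so no genuinely new computation is required beyond bookkeeping of the long exact sequence.
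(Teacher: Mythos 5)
Your proof is correct; the paper states this lemma without proof, attributing it to \cite[Lemma~6.6]{Vo03}. Your reduction via the long exact sequence of the triangle $\mathcal{X}\to\Spec k\to\widetilde{\mathcal{X}}$ to the Beilinson--Lichtenbaum comparison in degrees $d\le c$, together with the \'etale acyclicity of $\check C(X)$ over $\Spec k$, is exactly the argument in that reference.
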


\begin{lem}[{\cite[Lemma~2.1]{Ro07}}]\label{l1}
Let $X$ be a smooth projective irreducible variety over $k$, $n\ge 2$,
and let $\mathcal{X}$ be the motive of $\check C(X)$. Then there is a natural
exact sequence:
$$0\to H^{n,n-1}(\mathcal{X},\zz/p)\to H^n_{et}(k,\mu_p^{\otimes(n-1)})
\to H^n_{et}(k(X),\mu_p^{\otimes(n-1)}).$$
\end{lem}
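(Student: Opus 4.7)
The strategy is to combine the long exact motivic cohomology sequence associated with the distinguished triangle $\mathcal{X}\to\Spec k\to\widetilde{\mathcal{X}}\to\mathcal{X}[1]$ with the Bloch--Kato (Beilinson--Lichtenbaum) comparison between motivic and étale cohomology.

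First, I would apply $\Hom_{\DM}(-,\zz/p(n-1)[*])$ to this triangle via Lemma~\ref{longseq}. By Lemma~\ref{beilinson2} one has $\widetilde{H}^{n,n-1}(\widetilde{\mathcal{X}},\zz/p)=0$ (since $n\le(n-1)+1$), while Lemma~\ref{puremotive} yields $H^{n,n-1}(\Spec k,\zz/p)=H^{n+1,n-1}(\Spec k,\zz/p)=0$: these are the higher Chow groups $\CH^{n-1}(\Spec k,n-2)\otimes\zz/p$ and $\CH^{n-1}(\Spec k,n-3)\otimes\zz/p$, which vanish because the codimension exceeds the simplicial index, so the underlying cycle complex of the point is zero. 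The surviving terms of the long exact sequence produce an isomorphism
\[
H^{n,n-1}(\mathcal{X},\zz/p)\;\cong\;\widetilde{H}^{n+1,n-1}(\widetilde{\mathcal{X}},\zz/p).
\]

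Next, I would translate the right-hand side into étale cohomology. The Bloch--Kato theorem produces a triangle $\zz/p(n-1)\to R\alpha_{*}\mu_p^{\otimes(n-1)}\to K$ (where $\alpha$ is the étale-to-Nisnevich change of site) whose cone $K$ has hypercohomology sheaves vanishing in degrees $\le n-1$. Since $X$ has $\bar k$-points, $\check C(X)\to\Spec k$ is an étale hypercover, so $\widetilde{\mathcal{X}}$ has trivial étale cohomology; chasing the associated long exact sequence then gives $\widetilde{H}^{n+1,n-1}(\widetilde{\mathcal{X}},\zz/p)\cong H^n(\widetilde{\mathcal{X}},K)$, and applied to $\Spec k$ (using $H^{n,n-1}(\Spec k,\zz/p)=0$) it gives $H^n(\Spec k,K)\cong H^n_{et}(k,\mu_p^{\otimes(n-1)})$. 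Feeding these into the hypercohomology long exact sequence for $K$ attached to the triangle $\mathcal{X}\to\Spec k\to\widetilde{\mathcal{X}}$, together with $H^{n-1}(\mathcal{X},K)=0$ (since the lowest nonzero cohomology sheaf of $K$ sits in degree $n$), yields
\[
0\to H^{n,n-1}(\mathcal{X},\zz/p)\to H^n_{et}(k,\mu_p^{\otimes(n-1)})\to H^n(\mathcal{X},K).
\]

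Finally, I would identify the rightmost map with the restriction to $k(X)$. Base-changing to $k(X)$ makes $\mathcal{X}_{k(X)}$ equivalent in $\DM$ to $\Spec k(X)$ (as $X_{k(X)}$ has a rational point), which produces a commutative square comparing $H^n(\mathcal{X},K)$ with $H^n(\Spec k(X),K)\cong H^n_{et}(k(X),\mu_p^{\otimes(n-1)})$. The principal obstacle is to verify that the induced map $H^n(\mathcal{X},K)\to H^n_{et}(k(X),\mu_p^{\otimes(n-1)})$ is injective; by the hypercohomology spectral sequence and the degree vanishing of the lower cohomology sheaves of $K$, this reduces to the standard Gersten/Bloch--Ogus unramifiedness of the Nisnevich sheaf $R^n\alpha_{*}\mu_p^{\otimes(n-1)}$ on the smooth factors of $\check C(X)$. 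Granting this injectivity, the kernel of $H^n_{et}(k,\mu_p^{\otimes(n-1)})\to H^n(\mathcal{X},K)$ coincides with $\ker(H^n_{et}(k,\mu_p^{\otimes(n-1)})\to H^n_{et}(k(X),\mu_p^{\otimes(n-1)}))$, which is the claimed exact sequence.
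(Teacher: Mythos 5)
Your argument is correct, and it uses exactly the ingredients one expects (and which Rost's cited Lemma~2.1 also relies on): the Beilinson--Lichtenbaum identification $\zz/p(n-1)\simeq\tau_{\le n-1}R\alpha_*\mu_p^{\otimes(n-1)}$, the fact that $\check C(X)\to\Spec k$ has sections étale-locally so that $\widetilde{\mathcal{X}}$ is étale-acyclic, and Gersten-type injectivity for the homotopy invariant Nisnevich sheaf with transfers $R^n\alpha_*\mu_p^{\otimes(n-1)}$. Note, however, that the paper itself offers no proof of this lemma --- it is stated as a citation to \cite[Lemma~2.1]{Ro07} with the remark that it follows from Bloch--Kato --- so there is no in-paper argument to match against.

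That said, the detour through $\widetilde{\mathcal{X}}$ in your write-up is longer than necessary and obscures the identification of the first map with the change-of-topology map. A tighter version of your own reasoning is: apply the truncation triangle $\zz/p(n-1)\to R\alpha_*\mu_p^{\otimes(n-1)}\to K$ directly to $\mathcal{X}$ and read off
\[
H^{n-1}(\mathcal{X},K)\to H^{n,n-1}(\mathcal{X},\zz/p)\to H^n_{et}(\mathcal{X},\mu_p^{\otimes(n-1)})\to H^n(\mathcal{X},K).
\]
Since $K$ has cohomology sheaves concentrated in degrees $\ge n$, the first term vanishes and $H^n(\mathcal{X},K)=H^0_{\mathrm{Nis}}\bigl(\mathcal{X},R^n\alpha_*\mu_p^{\otimes(n-1)}\bigr)$, which injects into the generic stalk $H^n_{et}(k(X),\mu_p^{\otimes(n-1)})$ by the Gersten injectivity you invoke; the middle term is $H^n_{et}(k,\mu_p^{\otimes(n-1)})$ because $\mathcal{X}\to\Spec k$ is an étale-local equivalence. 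This gives the lemma at once and makes transparent that the injection $H^{n,n-1}(\mathcal{X},\zz/p)\hookrightarrow H^n_{et}(k,\mu_p^{\otimes(n-1)})$ is induced by the map $\zz/p(n-1)\to R\alpha_*\mu_p^{\otimes(n-1)}$, i.e.\ the natural comparison map --- a compatibility that your chain of two boundary isomorphisms followed by a restriction leaves to a $3\times 3$-lemma check if you keep your route.
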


In particular, any element of $H^{n,n-1}(\mathcal{X},\zz/p)$
can be identified with an element of the Galois cohomology
$H^n_{et}(k,\mu_p^{\otimes(n-1)})$.

\section{Rost's construction}\label{skeleton}
We assume that $\mathrm{char}\,k=0$ and $k$ contains a primitive $p$-th
root of unity $\zeta_p$.

\begin{dfn}[Norm varieties]
Let $u\in H_{et}^n(k,\mu_p^{\otimes n})$.
A smooth projective irreducible variety $X$ over $k$ is called a norm variety for $u$
if $X$ is a $\nu_{n-1}$-variety and $u_{k(X)}=0$.
\end{dfn}
If $u$ is a pure symbol, i.e., $u=(a_1)\cup\ldots\cup(a_n)$
for some $a_i\in H_{et}^1(k,\mu_p)\simeq k^\times/k^{\times p}$,
then it was shown by M.~Rost that a norm variety for $u$ exists.
There is a general conjecture that if $X$ is a norm variety for $u$,
then $u$ is a pure symbol.

\begin{dfn}[Rost]\label{special}
Let $n$ be a positive integer, $p$ a prime, $b=\frac{p^{n-1}-1}{p-1}$,
and $X$ a smooth projective
geometrically irreducible variety over $k$
of dimension $p^{n-1}-1$. Consider the complex
$$
\CH^b(X)\xrightarrow{\pi_0^*-\pi_1^*} \CH^b(X\times X)\xrightarrow{\pi_0^*-\pi_1^*+\pi_2^*} \CH^b(X\times X\times X)
$$
where $\pi_i$ is the $i$-th projection in the diagram
$$X\llarrow X\times X\lllarrow X\times X\times X.$$
Let $\rho\in\CH^b(X\times X)$. Define
$$c(\rho):=(\pi_0)_*(\rho^{p-1})\in\CH^0(X)=\zz.$$
The cycle $\rho$ is called a {\it special correspondence} of type $(n,p)$ for $X$
if $$(\pi_0^*-\pi_1^*+\pi_2^*)(\rho)=0$$ and $$c(\rho)\ne 0\text{ mod }p.$$
\end{dfn}

In this definition we assume that $X$ is geometrically irreducible, since we consider products
of varieties, like $X\times X$, which can be reducible, if $X$ is irreducible, but not geometrically
irreducible.

M.~Rost showed that a variety $X$ which possesses a special
correspondence and has no zero-cycles of degree coprime to $p$
is a $\nu_{n-1}$-variety (see \cite[Section~9]{Ro07}).
We remark also that by \cite[Lemma~5.2]{Ro07} one has $\rho_{k(X)}=H\times 1-1\times H$
for some cycle $H\in\CH^b(X_{k(X)})$.

The following table gives some examples of norm varieties
(cf. \cite{Ro02}). We refer to \cite{Inv} for notations.
\newline
\begin{longtable}{l|l|l|p{8cm}}
$p$ & $n$ & symbol & norm variety\\
\hline
$2$ & any & $(a_1)\cup\ldots\cup(a_n)$ &
the projective quadric given by $q=0$ where
$q=\lla a_1,\ldots,a_{n-1}\rra\perp\langle -a_n\rangle$.\\
\hline
$2$ & $5$ & $f_5$ & the variety of singular trace zero lines in a reduced Albert algebra
with the cohomological invariant $f_5$; this variety is a twisted form of $\F/P_4$,
where $P_4$ is a maximal parabolic subgroup of type $4$ (enumeration of simple roots
follows Bourbaki).\\
\hline
any & $2$ & $(a)\cup(b)$ &
Severi--Brauer variety $\mathrm{SB}(A)$ where $A=\langle x,y\mid x^p=a,
y^p=b, xy=\zeta_pyx\rangle$.\\
\hline
$3$ & $3$ & $g_3$ & the twisted form of a hyperplane section of
$\mathrm{Gr}(3,6)$ defined in \cite{S08}; this variety is closely related
to Albert algebras with the Rost invariant $g_3$.\\
\hline
$5$ & $3$ & $h_3$ & the variety is related to groups of type $\E_8$
with the Rost invariant $h_3$; it can be constructed from projective
$\E_8$-homogeneous varieties using the algorithm of Section~\ref{seccompr}.\\
\hline
any & $3$ & $(a)\cup(b)\cup(c)$ & any smooth compactification
of the Merkurjev--Suslin variety $\mathrm{MS}(a,b,c)=\{\alpha\in A
\mid\mathrm{Nrd}_A(\alpha)=c\}$ where
$A$ is as above, and $\mathrm{Nrd}_A$ is its reduced norm.\\
\hline
$3$ & $4$ & $(a)\cup(b)\cup(c)\cup(d)$ & any smooth compactification of the variety
$\{\alpha\in J\mid N_J(\alpha)=d\}$ where $J$ is the Albert algebra
obtained from the 1st Tits construction out of $A$ and $c$ with
$A$ as above, and $N_J$ is the cubic norm on $J$.
\end{longtable}

We recall now the construction of the special correspondences of Rost
out of a symbol.
Let $n$ be an integer, $p$ a prime,
$u\in H_{et}^n(k,\mu_p^{\otimes (n-1)})$ an element in the Galois
cohomology of the field $k$, and $X$ a geometrically irreducible smooth projective variety
such that $u_{k(X)}=0$.

By Lemma~\ref{l1} the element $u$ can be identified with an element
$\delta\in H^{n,n-1}(\mathcal{X},\zz/p)$ of the motivic cohomology
of the standard simplicial scheme $\mathcal{X}$ associated with $X$.

Define now $\mu=\tilde Q_0\circ Q_1\circ\ldots\circ Q_{n-2}(\delta)
\in H^{2b+1,b}(\mathcal{X},\zz)$, where $Q_i$ are the Milnor
operations, $\tilde Q_0$ is the integral valued Bockstein,
and $b=\frac{p^{n-1}-1}{p-1}$.

Consider now the {\it skeleton filtration} on $\mathcal{X}$.
Let $Y=\Cone(X\to\mathcal{X})$ be the cone of the natural map.
Then there is another natural map $(X\times X)[1]\to Y$ such that
the composition $(X\times X)[1]\to Y\to X[1]$ equals $\pi_0-\pi_1$.

Let $Z=\Cone((X\times X)[1]\to Y)$. Then again there is a natural map
$(X\times X\times X)[2]\to Z$ such that the composition
$(X\times X\times X)[2]\to Z\to (X\times X)[2]$ equals $\pi_0-\pi_1+\pi_2$.

These exact triangles give the long exact sequences for
motivic cohomology with integral coefficients:
$$H^{2b,b}(X)\to H^{2b+1,b}(Y)\to H^{2b+1,b}(\mathcal{X})\to H^{2b+1,b}(X)$$
and
$$H^{2b+1,b}(Y)\to H^{2b,b}(X\times X)\xrightarrow{\alpha} H^{2b+2,b}(Z).$$

Notice now that $H^{2b,b}(X)=\CH^b(X)$, $H^{2b+1,b}(X)=0$,
$H^{2b,b}(X\times X)=\CH^b(X\times X)$ and there is a map
$$H^{2b+2,b}(Z)\to H^{2b,b}(X\times X\times X)=\CH^b(X\times X\times X)$$
such that the composition $$\CH^b(X\times X)\xrightarrow{\alpha} H^{2b+2,b}(Z)\to
\CH^b(X\times X\times X)$$ equals $\pi_0^*-\pi_1^*+\pi_2^*$.

Summarizing we have the following diagram:
$$
\xymatrix{
\CH^b(X) \ar[rd]_{\pi_0^*-\pi_1^*}\ar[r] & H^{2b+1,b}(Y)\ar[r]\ar[d] & H^{2b+1,b}(\mathcal{X})\ar[r]&0\\
& \CH^b(X\times X)\ar[r]^{\alpha}\ar[rd]_{\pi_0^*-\pi_1^*+\pi_2^*} & H^{2b+2,b}(Z)\ar[d]&\\
&&\CH^b(X\times X\times X)& }
$$

Thus, the element $\mu\in H^{2b+1,b}(\mathcal{X})$ gives rise to
an element $\rho=\rho(\mu)$ in the homology of the complex
$$\CH^b(X)\xrightarrow{\pi_0^*-\pi_1^*}\CH^b(X\times X)
\xrightarrow{\pi_0^*-\pi_1^*+\pi_2^*}\CH^b(X\times X\times X).$$

If the element $\rho$ (as a cycle in $\CH^b(X\times X)$) satisfies
additionally the condition $$c(\rho)\ne 0\text{ mod }p,$$ then $\rho$
turns out to be a special correspondence on $X$.

\section{Norm varieties and special correspondences}\label{mainsection}
We continue to assume that $\mathrm{char}\,k=0$ and $k$ contains a primitive $p$-th
root of unity. The goal of this and of the next sections is to prove the following theorem:
\begin{thm}\label{mainthm}
Let $X$ be a smooth projective geometrically irreducible variety 
which possesses a special correspondence of type $(n,p)$.
Assume that $X$ has no zero-cycles of degree coprime to $p$.
Then there exists a unique up to non-zero scalar (functorial) element $0\ne u\in H_{et}^n(k,\mu_p^{\otimes (n-1)})$
such that $X$ is a norm variety for $u$.

For any field extension $K/k$ the invariant $u_K=0$ iff
$X_K$ has a zero-cycle of degree coprime to $p$.
\end{thm}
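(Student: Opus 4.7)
The approach is to reverse Rost's construction from Section~\ref{skeleton}. Starting with the special correspondence $\rho$, I first lift it through the skeleton filtration to a class $\mu\in H^{2b+1,b}(\mathcal{X},\mathbb{Z})$, and then invert the chain of Milnor operations $\tilde{Q}_0\circ Q_1\circ\cdots\circ Q_{n-2}$ to recover $\delta\in H^{n,n-1}(\mathcal{X},\mathbb{Z}/p)$. By Lemma~\ref{l1} the class $\delta$ corresponds canonically to an element $u\in H^n(k,\mu_p^{\otimes(n-1)})$ which automatically vanishes over $k(X)$. Together with Rost's theorem (quoted after Definition~\ref{special}) that our hypotheses make $X$ a $\nu_{n-1}$-variety, this gives precisely that $X$ is a norm variety for $u$.

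For the first step, Lemma~\ref{longseq} applied to the skeleton triangles yields long exact sequences in which the hypothesis $(\pi_0^*-\pi_1^*+\pi_2^*)\rho=0$ lifts $\rho\in\CH^b(X\times X)=H^{2b,b}(X\times X,\mathbb{Z})$ to a class in $H^{2b+1,b}(Y,\mathbb{Z})$, and the vanishing $H^{2b+1,b}(X,\mathbb{Z})=0$ for smooth projective $X$ surjects that lift onto $\mu\in H^{2b+1,b}(\mathcal{X},\mathbb{Z})$. This is a routine diagram chase; the indeterminacy in $\mu$ coming from $\CH^b(X)$ is harmless in the sequel.

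Inverting the Milnor operations is the main obstacle. The crucial input is the Margolis vanishing $\widetilde{HM}_i^{*,*'}(\widetilde{\mathcal{X}})=0$ for $0\le i\le n-1$ from Lemma~\ref{margolislemma}, which applies because $X$ is a $\nu_{n-1}$-variety and asserts the exactness of the Milnor complex~\eqref{margolis} on $\widetilde{\mathcal{X}}$. Using the triangle $\mathcal{X}\to\mathbb{Z}/p\to\widetilde{\mathcal{X}}$ together with Lemma~\ref{beilinson2} and Lemma~\ref{ll} --- which kill the motivic cohomology of $\Spec k$ in the bidegrees of interest --- this transfers to exactness on the relevant pieces of $H^{*,*}(\mathcal{X},\mathbb{Z}/p)$. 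I then iteratively produce preimages: first $\nu_0\in H^{2b,b}(\mathcal{X},\mathbb{Z}/p)$ with $\tilde{Q}_0(\nu_0)=\mu$ (after reducing mod $p$ and noting that the Bockstein obstruction vanishes because $\mu$ lifts integrally), then $\nu_1$ with $Q_1(\nu_1)=\nu_0$, and so on, ending with $\delta\in H^{n,n-1}(\mathcal{X},\mathbb{Z}/p)$. The subtlety at each stage is that the chosen preimage must also lie in the kernel of the next operation so that the induction continues; this is arranged via the anti-commutation $Q_iQ_j+Q_jQ_i=0$ of Lemma~\ref{milnormulti} together with Margolis exactness at the next level (one corrects a preimage by adding an appropriate image of the operation to be inverted next).

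For nontriviality and the \emph{iff} statement: if $\delta$ were zero then $\mu=0$ modulo the harmless ambiguity, so $\rho$ would be a coboundary $\pi_0^*\alpha-\pi_1^*\alpha$ in $\CH^b(X\times X)$, forcing $c(\rho)\equiv 0\bmod p$ --- contradicting the hypothesis --- so $u\ne 0$. If $X_K$ has a zero cycle of degree coprime to $p$, the natural map $\mathcal{X}_K\to\Spec K$ is an isomorphism in $\DM$, hence $\delta_K\in H^{n,n-1}(\Spec K,\mathbb{Z}/p)$, which is zero since $n>n-1$; thus $u_K=0$. Conversely, if $X_K$ has no such zero cycle, then $\rho_K$ is still a special correspondence on $X_K$ with $c(\rho_K)\ne 0\bmod p$, and applying the same construction over $K$ --- which commutes with base change at every step --- produces a nonzero class that must coincide with $u_K$ by functoriality, so $u_K\ne 0$.
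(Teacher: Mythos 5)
Your high-level plan — start from $\rho$, produce $\mu$, invert the Milnor operations to get $\delta$, identify $\delta$ with $u$ via Lemma~\ref{l1} — matches the broad outline of the paper's proof, but you are missing the central technical device and several essential lemmas, and the route you propose for the first step does not actually work.

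\emph{The lifting of $\rho$ to $\mu$ is not a routine diagram chase.} The long exact sequence from the skeleton triangle $(X\times X)[1]\to Y\to Z$ reads $\ldots\to H^{2b+1,b}(Y)\to H^{2b,b}(X\times X)\xra{\alpha}H^{2b+2,b}(Z)\to\ldots$; the hypothesis $(\pi_0^*-\pi_1^*+\pi_2^*)(\rho)=0$ only tells you that $\alpha(\rho)$ lands in the kernel of the subsequent map $H^{2b+2,b}(Z)\to\CH^b(X^3)$, \emph{not} that $\alpha(\rho)=0$. So there is no reason $\rho$ lifts to $H^{2b+1,b}(Y)$. The paper sidesteps this entirely: it first invokes Rost's Proposition~\ref{rostmotive} to produce the generalized Rost motive $R$ as a direct summand of $X$, then constructs a filtration of $R$ by exact triangles involving $\mathcal{X}$ (Lemma~\ref{triang}), and \emph{defines} $\mu$ as the image of $[X]\in\Ch^0(X)$ under the connecting map of the triangle $R_{p-2}\{b\}\to R_{p-1}\to\mathcal{X}$ (Lemma~\ref{defeta}). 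That is a fundamentally different construction, and it is exactly what lets the paper control $\mu$.

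\emph{The key vanishing $Q_i(\mu)=0$ is not established.} Your induction to invert the Milnor operations can only start if you know $Q_i(\mu)=0$ for all $0\le i\le n-2$; noting that $\mu$ has an integral lift (so the Bockstein vanishes) does nothing for the higher $Q_i$. In the paper this is Lemma~\ref{milnortriv}, and its proof uses the exact triangles of Lemma~\ref{triang} and the vanishing Lemma~\ref{triv} — i.e.\ it goes through the Rost motive filtration, which your proposal bypasses. Without it, your induction has no starting point. Moreover, the paper's Lemma~\ref{lll} does not merely appeal to anti-commutation and ``correcting a preimage'': at each stage it performs an explicit bidegree computation and kills the obstruction $v$ via Lemma~\ref{beilinson2}, and this degree count is precisely what \emph{fails} for $p=2$. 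The $p=2$ case needs an entirely different argument (the Izhboldin--Vishik Lemmas~\ref{prevlem}, \ref{milnorinj}, \ref{finallemma} of Section~6, which use surjectivity of multiplication by $\mu$ and injectivity of $Q_i$ on the relevant bidegrees). Your proposal does not acknowledge the dichotomy.

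\emph{The nontriviality and the ``iff'' are not proven.} Your claim that $\mu=0$ would force $\rho$ to be a coboundary, and that a coboundary forces $c(\rho)\equiv 0\bmod p$, is asserted without proof and is not obvious. The paper establishes nontriviality in Lemma~\ref{defeta}: $\mu\ne 0$ exactly when the image of $\CH^b(R_{p-1})\to\CH^b((R_{p-1})_{k(X)})=\zz$ is $p\zz$, which by \cite[Cor.~5.7]{Ro07} is equivalent to $X$ having no zero cycle of degree coprime to $p$; this also directly gives the ``iff'' over every $K$ via Lemma~\ref{llll}. Finally, you invoke ``Margolis vanishing for $0\le i\le n-1$,'' but the paper's Lemma~\ref{margolislemma} only gives it for $0\le i\le n$ (with $X$ a $\nu_n$-variety; here $X$ is $\nu_{n-1}$, so the range is $0\le i\le n-1$) — your indexing is off in a way that would need to be made precise given how delicate the degree bookkeeping is.

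In short: the strategy you describe is what Rost's construction \emph{suggests} one should try, but the heavy lifting that makes it go through — the Rost motive $R$, the filtration of Lemma~\ref{triang}, the vanishing lemmas~\ref{triv},~\ref{milnortriv}, the bidegree computations of Lemma~\ref{lll}, and the entirely separate $p=2$ treatment — is absent from your proposal.
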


As in \cite{Ro07} we define $I(X)$ as the image of the degree map $\deg\colon\CH_0(X)\to\zz$,
where $X$ is a smooth projective variety over $k$.

\begin{prop}[M.~Rost]\label{rostmotive}
Let $X$ be a smooth projective geometrically irreducible variety which possesses a special correspondence $\rho$
of type $(n,p)$.
Assume $I(X)\subset p\zz$ and set $b=\tfrac{p^{n-1}-1}{p-1}$. Then the motive of $X$ considered with
$\zz_{(p)}$-coefficients has an indecomposable generically split (see Definition~\ref{gensplmot} below) direct
summand $R$ constructed in \cite{Ro07} such that
$$R\otimes X\simeq\bigoplus_{i=0}^{p-1}X\{bi\}.$$
\end{prop}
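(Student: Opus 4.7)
The plan is to follow Rost's construction in \cite{Ro07}: build a family of idempotents in $\CH^{\dim X}(X \times X) \otimes \zz_{(p)}$ directly from $\rho$, and deduce the tensor-product identity by comparison with the explicit splitting available over $K := k(X)$.

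First I would work over $K$, where $X_K$ has a rational point $\xi$. Restricting $\rho$ along $(\id,\xi)$ gives a class $h \in \CH^b(X_K) \otimes \zz_{(p)}$; since $\dim X = (p-1)b$, the top power $h^{p-1}$ is a zero-cycle of degree $c(\rho)$, a $p$-adic unit. A direct computation of correspondence compositions shows that the classes $$e_i := \tfrac{1}{c(\rho)}\, h^i \times h^{p-1-i}, \qquad 0 \le i \le p-1,$$ form mutually orthogonal idempotents in $\CH^{\dim X}(X_K \times X_K) \otimes \zz_{(p)}$ whose sum $e := \sum_i e_i$ cuts out the Tate summand $\bigoplus_{i=0}^{p-1} \zz_{(p)}\{bi\}$ of the motive of $X_K$.

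Next I would lift $e$ to $k$. The cocycle condition $(\pi_0^* - \pi_1^* + \pi_2^*)(\rho) = 0$ is precisely the relation needed to manipulate convolution powers of $\rho$ on $X^{\times 3}$ modulo cycles pulled back from the third factor, and using it one assembles a cycle $\widetilde{e} \in \CH^{\dim X}(X \times X) \otimes \zz_{(p)}$ that restricts to $e$ over $K$. Since $X$ is generically split (it acquires a rational point over $K$), Rost nilpotency (Lemma~\ref{rostnilp}) guarantees that the error $\widetilde{e}^2 - \widetilde{e}$ is nilpotent, and standard idempotent-lifting yields a genuine idempotent over $k$; set $R := (X, \widetilde{e})$. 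Then $R$ is generically split by construction and $R_K \simeq \bigoplus_{i=0}^{p-1} \zz_{(p)}\{bi\}$. Indecomposability of $R$ uses the hypothesis $I(X) \subset p\zz$: any splitting of $\widetilde{e}$ would, via Rost nilpotency together with the vanishing $\Hom(\zz_{(p)}\{bi\}, \zz_{(p)}\{bj\}) = 0$ for $i \ne j$ (Lemma~\ref{zerohom}), separate off one of the Tate pieces $\zz_{(p)}\{bi\}$, and in particular the piece with $i=0$ would exhibit a zero-cycle of degree coprime to $p$ on $X$, a contradiction. Finally, $R \otimes X$ is a generically split summand of $X \times X$, and over $K$ it is $R_K \otimes X_K \simeq \bigoplus_{i=0}^{p-1} X_K\{bi\}$; Rost nilpotency once more lifts this splitting to $R \otimes X \simeq \bigoplus_{i=0}^{p-1} X\{bi\}$ over $k$.

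The main obstacle is the lifting step: producing $\widetilde{e}$ on $X \times X$ whose generic specialization is $e$. The symmetric expressions $h^i \times h^{p-1-i}$ depend on a choice of rational point and have no direct analogue over $k$; rebuilding $e$ out of $\rho$ forces repeated use of the cocycle identity in $\CH^b(X^{\times 3})$ combined with iterated intersection computations on $X^{\times 3}$ and $X^{\times 4}$. This combinatorial bookkeeping in higher self-products of $X$ is the technical heart of Rost's argument.
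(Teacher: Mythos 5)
Your proposal diverges from Rost's argument (which the paper outlines and cites) in a way that leaves a genuine gap, and in fact misidentifies where the real difficulty lies. You try to construct the projector over $K=k(X)$ first, as the explicit idempotent $e=\sum_i e_i$ with $e_i=\tfrac{1}{c(\rho)}\,h^i\times h^{p-1-i}$, and then to \emph{lift} it to $k$. You yourself flag the lifting as ``the main obstacle'' and speculate that reassembling $e$ from $\rho$ via the cocycle identity ``in higher self-products of $X$'' is the technical heart of Rost's argument. It is not. Rost never needs to lift anything: since $\rho\in\CH^b(X\times X)$ is defined over $k$ and $(p-1)b=\dim X$, the candidate projector is simply $\rho^{p-1}\in\CH^{\dim X}(X\times X)$ (suitably normalized by the unit $c(\rho)$), already a cycle over the base field. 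The paper records precisely this: $R=(X,\rho^{p-1})\bmod p$. So the step you identified as missing is not a hard combinatorial computation on $X^{\times 3}$ and $X^{\times 4}$ --- it simply does not occur, because the projector is written down directly.

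The second place where your route departs from the paper is the proof of $R\otimes X\simeq\bigoplus_{i=0}^{p-1}X\{bi\}$. You invoke Rost nilpotency (Lemma~\ref{rostnilp}) to transfer an isomorphism from $K$ down to $k$. The paper instead works entirely with explicit correspondences and never touches nilpotence in this proposition: it defines $\theta_i=(\Delta_X\times 1)(1\times\rho^i)\in\Hom(X\{b(p-1-i)\},X\otimes R)$, sets $\theta=\sum\theta_i$, and then uses Manin's identity principle, reducing the claim to showing that the induced map $\Phi=\sum_i\varphi_i$ on Chow groups of $T\times X\times X$ is bijective for every pure motive $T$. That bijectivity of $\Phi$ (which Rost proves with a filtration argument and the projection formula) is the genuine technical content. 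Your nilpotency route could in principle be made to work, but only \emph{after} you have the projector over $k$ --- which is exactly the step your proposal leaves unfinished --- and it would still require an argument that $R\otimes X$ and $\bigoplus X\{bi\}$ are mutual direct summands before nilpotence can upgrade a generic isomorphism to an honest one. As written, the proposal does not close either gap.

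One smaller remark: your indecomposability argument (splitting off the Tate piece $\zz_{(p)}\{0\}$ would produce a zero-cycle of degree coprime to $p$) is the right idea and is consistent with the hypothesis $I(X)\subset p\zz$, but to make it rigorous you must rule out the possibility that a nontrivial summand of $R_K$ contains $\zz_{(p)}\{0\}$ together with some, but not all, of the other Tate twists; that requires a short additional argument about which idempotents in $\End(R_K)\cong(\zz_{(p)})^{p}$ lift to $k$, again via Rost nilpotence. The paper defers this entirely to \cite[Proposition~7.14]{Ro07}, so it is not a gap relative to the paper, but in your self-contained version it would need to be filled in.
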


This Proposition is proved in \cite[Proposition~7.14]{Ro07}.
The motive $R$ is called a {\it generalized Rost motive}. We outline
Rost's proof for reader's convenience to make the exposition more self-contained.

We write $\CH$ for the Chow group with $\zz_{(p)}$-coefficients.
By Manin's identity principle (Yoneda lemma for motives) it suffices to
construct a correspondence
$$\theta\in
\Hom(\bigoplus_{i=0}^{p-1}X\{bi\},X\otimes R)$$ such that
for any smooth projective variety $T$ over $k$
$$\theta\circ-\colon\Hom(T,\bigoplus_{i=0}^{p-1} X\{bi\})\to
\Hom(T,X\otimes R)$$ is an isomorphism. Notice that the composition with
$\theta$ equals the realization of $\Delta_T\otimes\theta$.

Define $$\theta_i=(\Delta_X\times 1)(1\times\rho^i)\in\Hom(X\{b(p-1-i)\},X\otimes R),\quad i=0,\ldots,p-1,$$
and $$\theta=\sum_{i=0}^{p-1}\theta_i.$$

Let $Y=T\times X$ and $f\colon Y\to X$ be the projection. For $0\le i\le p-1$
let
\begin{align*}
\varphi_i\colon\CH^h(Y)&\to\CH^{h+bi}(Y\times X)\\
\alpha&\mapsto\pi_X^*(\alpha)\cdot(f\times\id_X)^*(\rho^i),
\end{align*}
where $\pi_X\colon Y\times X\to Y$ is the projection,
$$\Phi=\sum_{i=0}^{p-1}\varphi_i$$ and
\begin{align*}
\psi_i\colon\CH^{h+bi}(Y\times X)&\to\CH^h(Y)\\
\beta&\mapsto(\pi_X)_*(\beta\cdot(f\times\id_X)^*(\rho^{p-1-i})),
\end{align*}
$$\Psi=\sum_{i=0}^{p-1}\psi_i.$$

One verifies using projection formulas that the realization of $\Delta_T\otimes \theta$
equals $\Phi$. In \cite[Proof of Proposition~7.14]{Ro07} Rost shows that
$\Phi$ is an isomorphism. Therefore $\theta$ is an isomorphism.

Observe also the following: Let $\BX=X_{k(X)}$ and $\overline R=R_{k(X)}$. Then a direct computation shows that over $k(X)$ (where $R$ becomes
$\bigoplus_{i=0}^{p-1}\zz_{(p)}\{bi\}$) the constructed isomorphism
$$\bigoplus_{i=0}^{p-1}\overline X\{bi\}\xrightarrow{\theta_{k(X)}}
\overline X\otimes \overline R=\bigoplus_{i=0}^{p-1}\overline X\{bi\}$$ is a lower triangular matrix
with $\Delta_{\overline X}$'s up to invertible scalars on the diagonal, i.e., $\theta_{k(X)}\colon \BX\{bi\}\to\BX\{bj\}$ is zero
for $i>j$, and equals $\Delta_{\overline X}$ up to an invertible scalar for $i=j$.
Moreover, Rost shows that $\overline R\simeq (\overline X,c\cdot \rho^{p-1})\mod p$ for some scalar $c$.

The following lemma will {\it not} be used in the construction of a cohomological invariant for groups of type $\E_8$
given in Section~\ref{sec8} below. This lemma was inspired by \cite[Proof of Thm.~4.4]{Vo01} and
\cite[Proof of Statement~1.1.1]{Vi98}.

\begin{lem}\label{triang}
Let $X$ be a smooth projective geometrically irreducible
variety which possesses a special correspondence $\rho$ of type $(n,p)$. Assume $I(X)\subset p\zz$ and let $R$ be a direct summand
of the motive of $X$ with $\zz_{p}$-coefficients given in Proposition~\ref{rostmotive}.
Then there exists the following filtration of $R$ consisting
of exact triangles:
\begin{gather*}
R_{p-2}\{b\}\to R_{p-1}\to\mathcal{X}\\
R_{p-3}\{b\}\to R_{p-2}\to\mathcal{X}\\
\ldots\\
R_0\{b\}\to R_1\to\mathcal{X}\\
0\to R_0\to\mathcal{X},
\end{gather*}
where $R_{p-1}=R$ and $\mathcal{X}$ denotes the motive of $\check C(X)$.
\end{lem}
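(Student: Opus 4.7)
My approach is to build the filtration inductively from the bottom up, constructing motives $R_0, R_1, \ldots, R_{p-1}$ satisfying the required triangles, and then identifying $R_{p-1}$ with the Rost motive $R$ of Proposition~\ref{rostmotive}.

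I would begin by setting $R_0 := \mathcal{X}$, so that the bottom triangle $0 \to R_0 \to \mathcal{X}$ is realized by the identity. Inductively, assume $R_{i-1}$ has been constructed. Define $R_i$ to be the fiber of a morphism $\xi_i \in \Hom(\mathcal{X}, R_{i-1}\{b\}[1])$; this automatically places $R_i$ in a distinguished triangle $R_{i-1}\{b\} \to R_i \to \mathcal{X}$. For $i = 1$, take $\xi_1$ to be the Rost map $\mu' \in \Hom(\mathcal{X}, \mathcal{X}\{b\}[1])$ constructed in Section~\ref{skeleton} (from the symbol $\delta \in H^{n,n-1}(\mathcal{X}, \zz/p)$ obtained via Lemma~\ref{l1}, followed by iterated Milnor operations and the lift provided by Lemma~\ref{localize}(2)). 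For $i \ge 2$, I would build $\xi_i$ by lifting $\xi_{i-1}$ through the filtration triangle $R_{i-2}\{b\}[1] \to R_{i-1}\{b\}[1] \to \mathcal{X}\{b\}[1]$; the obstruction lies in $\Hom(\mathcal{X}, \mathcal{X}\{b\}[2])$ and should vanish by combining Lemmas~\ref{zerohom}, \ref{ll}, and \ref{beilinson2}.

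The remaining step is to identify $R_{p-1}$ with $R$. By the inductive construction, $(R_{p-1})_{k(X)}$ is an iterated extension of the Tate motives $\zz/p\{bi\}$ for $0 \le i \le p-1$. Lemma~\ref{zerohom} combined with Lemma~\ref{cancellation} shows that $\Hom(\zz/p\{bj\}, \zz/p\{bi\}[1]) = 0$ for $j > i$, so all the extensions split over $k(X)$, giving $(R_{p-1})_{k(X)} \simeq \bigoplus_{i=0}^{p-1} \zz/p\{bi\}$. Since by Proposition~\ref{rostmotive} this is also the form of $R_{k(X)}$, both motives agree over $k(X)$. As $R$ (and hence $R_{p-1}$) is generically split, Rost nilpotency (Lemma~\ref{rostnilp}) allows us to lift a $k(X)$-isomorphism to a $k$-isomorphism $R_{p-1} \simeq R$.

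I expect the main obstacle to be this final identification. Two subtleties arise: first, the morphisms $\xi_i$ must be chosen coherently so that the resulting $R_{p-1}$ matches the specific motive $R$, not merely a motive of the same generic shape; second, upgrading the isomorphism from $k(X)$ to $k$ via Rost nilpotency requires some care, since nilpotency is a statement about endomorphism algebras. The vanishing of the obstructions to constructing the $\xi_i$ ultimately relies on $X$ being a $\nu_{n-1}$-variety, which enters via the Margolis cohomology vanishing of Lemma~\ref{margolislemma}.
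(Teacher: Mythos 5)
Your approach runs top-to-bottom in reverse of the paper's: you build $R_0=\mathcal{X}, R_1, \ldots, R_{p-1}$ from below and then try to identify $R_{p-1}$ with the Rost motive $R$. The paper instead decomposes $R$ itself from the top down: it sets $M=R_{p-1}=R$, uses the skeleton filtration $Y=\Cone(X\to\mathcal{X})$ tensored with $M$ to produce a canonical map $\beta\in\Hom(M,\zz/p\{l\})$ (lifting the structural map of $X$ through the isomorphism $\theta$ of Proposition~\ref{rostmotive}), converts it via Lemma~\ref{localize}(2) into a morphism $M\to\mathcal{X}\{l\}$, and takes its fibre to be $M'$, iterating until $M_{-1}\otimes X^{p+1}=0$ forces $M_{-1}=0$.

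There are two genuine gaps in your version. First, a circularity: you propose $\xi_1=\mu'$, where $\mu'$ is the class produced in Section~\ref{skeleton} by applying $\tilde Q_0 Q_1\cdots Q_{n-2}$ to $\delta\in H^{n,n-1}(\mathcal{X},\zz/p)$, and $\delta$ is obtained from a Galois cohomology symbol $u$ via Lemma~\ref{l1}. But the existence of that symbol $u$ (given only the special correspondence $\rho$) is precisely the content of Theorem~\ref{mainthm}, whose proof rests on Lemma~\ref{triang}. In the paper the logical order is the opposite: $\mu$ is \emph{defined} from the triangle $R_{p-2}\{b\}\to R_{p-1}\to\mathcal{X}$ in Lemma~\ref{defeta}, after Lemma~\ref{triang} is established using only $\rho$ and $R$. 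Second, the identification $R_{p-1}\simeq R$, which you flag as a subtlety, is not actually resolved: Rost nilpotency tells you that a correspondence which vanishes over $k(X)$ is nilpotent, but to use it you first need a morphism $R_{p-1}\to R$ defined over $k$ whose base change to $k(X)$ is an isomorphism, and nothing in your construction produces such a $k$-morphism. Moreover, $R_{p-1}$ built as an iterated cone is a priori only an object of $\DM$, not visibly a pure motive or a direct summand of some $X\{l\}$, so it is not even clear Rost nilpotency applies to it. The paper sidesteps both issues by never leaving $R$.

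A smaller point: the obstruction to lifting $\xi_{i-1}$ to $\xi_i$ does not lie in $\Hom(\mathcal{X},\mathcal{X}\{b\}[2])$. Shifting the triangle $R_{i-2}\{b\}\to R_{i-1}\to\mathcal{X}$ by $\{b\}[1]$ gives $R_{i-2}\{2b\}[1]\to R_{i-1}\{b\}[1]\to\mathcal{X}\{b\}[1]\to R_{i-2}\{2b\}[2]$, so the obstruction sits in $\Hom(\mathcal{X},R_{i-2}\{2b\}[2])$; for $i=2$ this is $\Hom(\mathcal{X},\mathcal{X}\{2b\}[2])$, and the Lemmas you cite do not obviously kill it. This is secondary to the circularity, but it means even the inductive step is not secured.
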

\begin{proof}
In this proof we write $\CH$ for Chow groups with $\zz_p$-coefficients.
The proof goes by induction.

First of all, notice that $\mathcal{X}$ lies in the localizing subcategory generated by $X$.
Let $M$ be a motive in the localizing subcategory generated by $X$
such that there is an isomorphism $$\theta\colon\bigoplus_{i\in I} X\{i\}\xrightarrow{\simeq}M\otimes X$$
for some finite set $I$ of non-negative integers.
Denote $\overline X=X\times_kk(X)$ and
take an isomorphism $\upsilon=(\upsilon_i)_{i\in I}\colon M_{k(X)}\to\bigoplus_{i\in I}\zz_{p}\{i\}$
defined over $k(X)$. Notice that the isomorphism $\upsilon=(\upsilon_i)$ is unique up to
a sequence of invertible scalars, since
$\Hom(\zz_p\{i\},\zz_p\{j\})=\zz_p$ if $i=j$ and is $0$ otherwise.

Assume additionally that the composite morphism
$$
\bigoplus_{i\in I}\BX\{i\}\xrightarrow{\theta_{k(X)}}(M\otimes X)_{k(X)}
\xrightarrow{\upsilon\otimes\id_{\BX}}\bigoplus_{i\in I}\BX\{i\}
$$
is a lower triangular matrix with isomorphisms
on the diagonal.

Observe that these conditions are satisfied by
the motive $R=R_{p-1}$ given in the previous proposition.
So, we can start induction with $M=R_{p-1}$ and $I=\{bi, i=0,\ldots,p-1\}$.

Consider the skeleton filtration of $\mathcal{X}$.
Let $Y=\Cone(X\to\mathcal{X})$ be the cone of the canonical
map $X\to\mathcal{X}$ induced by the structural map $X\to\Spec k$.
Then we have the following exact triangle:
\begin{equation}\label{tensormotive}M\otimes X\to M\otimes\mathcal{X}\to M\otimes Y.
\end{equation}

By Lemma~\ref{localize}(I) $M\otimes\mathcal{X}=M$.
Let now $l$ be the smallest element in $I$.

The above triangle induces the long exact sequence
\begin{multline}\label{f3}
\Hom(M\otimes Y,\zz_{p}\{l\})\to\Hom(M,\zz_{p}\{l\})\to\Hom(M\otimes X,\zz_{p}\{l\})\\
\to\Hom((M\otimes Y)[-1],\zz_{p}\{l\}).
\end{multline}

The group $\Hom(M\otimes Y,\zz_{p}\{l\})=0$ by \cite[Prop.~8.1]{Vo01}, since $$\Hom(V\{l'\}[j],\zz_{p}\{l\})=\Hom(V[j],\zz_{p}\{l-l'\})=0$$
for any smooth projective variety $V$ and any $j>0$ and $l'\ge l$ (see Lemma~\ref{zerohom})
and since $Y$ is an extension of $X^i[i-1]$, $i>1$.

Moreover, the same arguments applied to the exact triangle $(X\times X)[1]\to Y\to Z$, where $Z$ denotes
the respective cone, show that we have an injection
$$\Hom((M\otimes Y)[-1],\zz_{p}\{l\})\to \Hom(M\otimes X\otimes X,\zz_{p}\{l\})$$
induced by the natural map $(X\otimes X)[1]\to Y$. Observe that the composition
$$(X\otimes X)[1]\to Y\to X[1]$$ is the difference of two projections
$\pi_0-\pi_1$ (see Section~\ref{skeleton} and \cite[p.~31]{Vi98}).

Since by induction hypothesis $M\otimes X\simeq \bigoplus_{i\in I}X\{i\}$
and since $l$ is minimal, we
have $\Hom(M\otimes X,\zz_{p}\{l\})\simeq\Hom(X\{l\},\zz_{p}\{l\})=\Hom(X,\zz_{p})=\CH^0(X)$.

We claim next that the image of the structural map $\pr\colon X\to\Spec k$
(as an element of $\Hom(M\otimes X,\zz_{p}\{l\})$) in
$\Hom((M\otimes Y)[-1],\zz_{p}\{l\})$ is trivial. Indeed, 
by the above identifications the morphism $\Hom(M\otimes X,\zz_p\{l\})\to\Hom(M\otimes X\otimes X,\zz_p\{l\})$
corresponds to the morphism $\CH^0(X)\to\CH^0(X\times X)$ given by $\pi_0^*-\pi_1^*$, which sends $\pr=[X]$ to $0$.
Therefore, since sequence~\eqref{f3} is exact, we can find a preimage
$\beta\in\Hom(M,\zz_{p}\{l\})$ of the structural map.

Lemma~\ref{localize}(II) gives us a map $M\to\mathcal{X}\{l\}$.
Define $$M'=\Cone(M\to\mathcal{X}\{l\})[-1]$$
and consider the composite map
$$\gamma\colon M'\otimes X\xrightarrow{\alpha\otimes\id_X} M\otimes X\xrightarrow[\theta^{-1}]{\simeq}\bigoplus_{i\in I}X\{i\}\to\bigoplus_{i\in I\setminus\{l\}}X\{i\}$$
arising from the exact triangle $(M'\xrightarrow{\alpha} M\to\mathcal{X}\{l\})\otimes X$,
where the last map in $\gamma$ is the canonical projection.
By our inductive assumptions on $M$ one can see that
over $k(X)$ where the motives $M$ and $M'$ become sums of twisted Tate motives,
$\gamma_{k(X)}$ is an isomorphism given by a lower triangular matrix with isomorphisms on the diagonal.
Therefore by \cite[Theorem~2.3.5]{Vi98}
we have an isomorphism $M'\otimes X\otimes X\simeq\bigoplus_{i\in I\setminus\{l\}}X\otimes X\{i\}$.
So, we can replace $X$ by $X\times X$ and apply induction. Notice that the motives of $\check C(X)$
and of $\check C(X\times X)$ are isomorphic by Lemma~\ref{lemma33}, and $\gamma$ satisfies all
the hypothesis needed for the next inductive step.

Therefore by induction we can construct the motives
$$M_{p-1}:=R,M_{p-2},M_{p-3},\ldots,M_0,M_{-1}$$ together with exact triangles
$$M_{s-1}\to M_s\to\mathcal{X}\{b(p-1-s)\},\quad s=0,\ldots,p-1,$$ and such
that $M_s\otimes X^{p-s}\simeq\bigoplus_{i=p-1-s}^{p-1}X^{p-s}\{bi\}$ for all $s$.
In particular, $M_{-1}\otimes X^{p+1}=0$.

But then $M_{-1}=M_{-1}\otimes\mathcal{X}=0$. Finally,
the existence of exact triangles as in the statement of the lemma follows from the
cancellation theorem (see Lemma~\ref{cancellation}) with $R_{s}\{b(p-1-s)\}\simeq M_{s}$ for all $s$.
We are done.
\end{proof}

Next we investigate the construction of Lemma~\ref{triang} in more details.

\begin{lem}\label{triv}
For $i=1,\ldots,p-1$
if $c_1>c_2$ then $$\Hom(R_i(c_1)[d_1],\zz/p(c_2)[d_2])=0.$$
The same formula holds for integral coefficients.
\end{lem}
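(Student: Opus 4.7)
My plan is to induct on $i$ using the filtration triangles of Lemma~\ref{triang}. The base case $i=0$, while not part of the statement, is needed to start the induction: the degenerate triangle $0\to R_0\to\mathcal{X}$ forces $R_0=\mathcal{X}$, so the claim becomes $\Hom(\mathcal{X}(c_1)[d_1],\zz/p(c_2)[d_2])=0$. After applying cancellation (Lemma~\ref{cancellation}) to strip the common Tate twist, this reduces to $\Hom(\mathcal{X},\zz/p(c_2-c_1)[d_2-d_1])=0$, which holds by Lemma~\ref{ll} because $c_2-c_1<0$.

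For the inductive step $1\le i\le p-1$, I would twist the triangle $R_{i-1}\{b\}\to R_i\to\mathcal{X}$ by $(c_1)[d_1]$ and apply the contravariant functor $\Hom(-,\zz/p(c_2)[d_2])$. Using $\{b\}=(b)[2b]$, Lemma~\ref{longseq} produces a long exact sequence in which $\Hom(R_i(c_1)[d_1],\zz/p(c_2)[d_2])$ is sandwiched between a copy of $\Hom(\mathcal{X}(c_1)[d_1],\zz/p(c_2)[d_2])$ and two copies of $\Hom(R_{i-1}(b+c_1)[2b+d_1+\varepsilon],\zz/p(c_2)[d_2])$ with $\varepsilon\in\{0,1\}$. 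The $\mathcal{X}$-term vanishes exactly as in the base case. The $R_{i-1}$-terms vanish by the inductive hypothesis, applied with new first twist $b+c_1$, provided $b+c_1>c_2$. Since $b\ge 0$ and $c_1>c_2$, this is automatic, and exactness of the long sequence forces the middle term to be zero.

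The argument is essentially routine bookkeeping. The only thing to watch is that the strict inequality $c_1>c_2$ propagates to $b+c_1>c_2$ under the shift by $\{b\}$, which holds trivially because $b$ is a nonnegative integer; for $n\ge 2$ one even has $b>0$, but that extra slack is not needed. Thus there is no genuine obstacle here once one has the filtration of Lemma~\ref{triang} in hand: the proof merely assembles Lemmas~\ref{triang}, \ref{longseq}, \ref{cancellation}, and \ref{ll} via a direct induction on~$i$.
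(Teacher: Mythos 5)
Your proof is correct and follows the same route the paper intends: the paper's one-line proof says the statement "follows from Lemma~\ref{triang} and Lemma~\ref{ll}," and your induction along the filtration triangles, reducing each step to $\Hom(\mathcal{X},\zz/p(c)[d])=0$ for $c<0$ via cancellation, is exactly the bookkeeping being elided. The observation that the shift by $\{b\}$ only increases the source twist, preserving the inequality $c_1>c_2$, is the one point worth making explicit, and you make it.
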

\begin{proof}
The statement follows from Lemma~\ref{triang} and Lemma~\ref{ll}.
\end{proof}

\begin{lem}\label{defeta}
Let $X$ be a smooth projective irreducible variety over $k$ and $R$ a direct summand of the motive of $X$ with $\zz_p$-coefficients.
Assume that $I(X)\subset p\zz$.
Denote by $\mathcal{X}$ the motive of the standard simplicial scheme associated with $X$.
Assume that one of the following two conditions holds:

I) The motive $R$ arises from a special correspondence $\rho$ of type $(n,p)$ (in particular, in this case
we assume that $X$ is geometrically irreducible) or

II) $p=2$ and the motive $R$ has a filtration $\mathcal{X}\{b\}\to R\to\mathcal{X}$
for some $b>0$.

Let $K/k$ be a field extension. Then the group
$\Hom(\mathcal{X}_K,\zz_p\{b\}[1])$ is finite cyclic of
order $1$ or $p$, and
$\Hom(\mathcal{X}_K,\zz_p\{b\}[1])=0$ iff $X_K$ has a zero-cycle
of degree coprime to $p$.
\end{lem}
\begin{proof}
We write $\CH$ for Chow groups with $\zz_p$-coefficients.
By functoriality of all our constructions it suffices to prove the lemma for $K=k$.

In case I of the present lemma we use notation from Lemma~\ref{triang}, in particular, $R=R_{p-1}$.
In case II we set $R_{p-1}:=R$, $R_{p-2}:=\mathcal{X}$, and $R_{p-3}:=0$.

Consider $\Hom$ from the exact triangle $R_{p-2}\{b\}\to R_{p-1}\to\mathcal{X}$
to $\zz_p\{b\}$. We get the following long exact sequence:
\begin{align*}
\Hom(R_{p-1},\zz_p\{b\})\xrightarrow{f}\Hom(R_{p-2}\{b\},\zz_p\{b\})\\
\xrightarrow{g}\Hom(\mathcal{X}[-1],\zz_p\{b\})\to\Hom(R_{p-1}[-1],\zz_p\{b\}).
\end{align*}
Notice that $\Hom(R_{p-1},\zz_p\{b\})=\CH^b(R_{p-1})$ (since $R_{p-1}$ is a pure motive),\\
$\Hom(R_{p-2}\{b\},\zz_p\{b\})=\Hom(R_{p-2},\zz_p)$ (Lemma~\ref{cancellation}),\\
$\Hom(\mathcal{X}[-1],\zz_p\{b\})=\Hom(\mathcal{X},\zz_p\{b\}[1])$, and\\
$\Hom(R_{p-1}[-1],\zz_p\{b\})\le\Hom(X[-1],\zz_p\{b\})=0$ (Lemma~\ref{zerohom}).

Assume that we are in Case I.
It follows from the proof of Lemma~\ref{triang} that
the homomorphism $$\Hom(R_{p-2},\zz_p)\to\Hom(R_{p-2}\otimes X,\zz_p)=\CH^0(X)$$ coming from
exact sequence~\eqref{f3} (with $M=R_{p-2}\{b\}$) is an isomorphism.
Thus, we get the following commutative diagram with exact rows

$$\xymatrix{
\Hom(R_{p-1},\zz_p\{b\}) \ar[r]^-f\ar@{=}[d] & \Hom(R_{p-2}\{b\},\zz_p\{b\})
\ar[r]^-g\ar[d]^{=} &\Hom(\mathcal{X},\zz_p\{b\}[1]) \ar[r]\ar@{=}[d]& 0\\
\CH^b(R_{p-1}) \ar[r]^-f& \CH^b(X\{b\}) \ar[r]^-g&\Hom(\mathcal{X},\zz_p\{b\}[1])\ar[r] & 0
}$$

We take now the extension of scalars from $k$ to $k(X)$. We have the following commutative diagram
with exact rows

$$\xymatrix{
\CH^b(R_{p-1}) \ar[r]^-f\ar[d]& \CH^0(X) \ar[r]^-g\ar[d]^{=}&\Hom(\mathcal{X},\zz_p\{b\}[1])\ar[r]\ar[d] & 0\\
\CH^b((R_{p-1})_{k(X)}) \ar[r]^-{f_{k(X)}}& \CH^0(X_{k(X)})\ar[r]&0&
}$$

The morphism $f_{k(X)}$ is surjective, since by Lemma~\ref{lemma33} the motives $\mathcal{X}_{k(X)}$ and $\Spec k(X)$
are isomorphic and hence by Lemma~\ref{zerohom} $\Hom(\mathcal{X}_{k(X)},\zz_p\{b\}[1])=0$.

On the other hand, by \cite{Ro07} (see also \cite[Lemma~SC.3]{KM13})
the image of the restriction homomorphism $$\CH^b(R_{p-1})\to\CH^b((R_{p-1})_{k(X)})=\zz_p$$ (the last equality
is due to the fact that $R_{p-1}$ is isomorphic over $k(X)$ to $\oplus_{i=0}^{p-1}\zz_p\{bi\}$)
contains $p\zz_p$. In particular, the group $\Hom(\mathcal{X},\zz_p\{b\}[1])$ has order $1$ or $p$ (and, in particular,
it is cyclic).

If $X$ has a zero-cycle of degree coprime to $p$, then by Lemma~\ref{lemma33}
$$\Hom(\mathcal{X},\zz_p\{b\}[1])=0.$$ Conversely, if $\Hom(\mathcal{X},\zz_p\{b\}[1])=0$,
then the restriction homomorphism $$\CH^b(R_{p-1})\to\CH^b((R_{p-1})_{k(X)})$$ is surjective and hence $X_{k(X)}$
has a zero-cycle of degree coprime to $p$ defined over $k$, namely, the cycle $H^{p-1}$, where the special correspondence
$\rho_{k(X)}=H\times 1-1\times H$ (see \cite[Lemma~5.2]{Ro07}).

Assume now we are in Case II. Proceeding as in Case I we have a commutative diagram with exact rows:

$$\xymatrix{
\CH^b(R) \ar[r]\ar[d]& \Hom(\mathcal{X}\{b\}[1],\zz_2\{b\}[1]) \ar[r]\ar[d]^{=}&\Hom(\mathcal{X},\zz_2\{b\}[1])\ar[r]\ar[d] & 0\\
\CH^b(R_{F}) \ar[r]& \zz_2\ar[r]&0&
}$$
\noindent
where $F/k$ is a field extension over which $X$ has a zero-cycle of odd degree.

If $X$ has a zero-cycle of odd degree, then exactly as in Case I the group $$\Hom(\mathcal{X},\zz_2\{b\}[1])=0.$$
Conversely, if this group is zero, then the restriction homomorphism $\CH^b(R)\to\CH^b(R_F)$
is surjective. 

Let $R=(X,\pi)$. Since over $F$ the motive $R_F$ is isomorphic to $\zz_2\oplus\zz_2\{b\}$,
the projector $\pi_F$ equals to $1\times\pt+x\times y$, where $\pt$ is a zero-cycle on $X_F$ of degree $1$,
$x\in\CH^b(R_F)$ and $y\in\CH_b(R_F)$. The cycle $y$ is rational by \cite[Lemma~3.5]{GPS}
(applied to the transposed projector $\pi^t$) and the cycle $x$ is rational by the
above considerations. Therefore $X_F$ has a zero-cycle $x\cdot y$ of degree $1$, which is defined over $k$.

Finally, by the proof of the next Lemma~\ref{transfer} (which does not use the present Lemma), the
group $\Hom(\mathcal{X},\zz_2\{b\}[1])$ is a $2$-torsion group. Therefore, if this group is non-trivial, then
it has order $2$. This implies the lemma.
\end{proof}

\begin{lem}\label{transfer}
In the settings of Lemma~\ref{defeta} the natural homomorphism
$$\Hom(\mathcal{X},\zz_p(f)[g])\to\Hom(\mathcal{X},\zz/p(f)[g])$$ is injective for $g>f$.
\end{lem}
\begin{proof}
Let $R=(X,\pi)$. Since $R$ is a direct summand of $X$, the $0$-cycle $(\pi_0)_*(\pi\cdot\pi^t)$,
where $\pi_0\colon X^2\to X$ is the first projection, has degree $p$ (since $R$ is a sum of $p$ twisted Tate motives
over an extension of the base field).
In particular, there exists a finite field extension $K/k$ of degree $pm$
with $m$ coprime to $p$ such that $X_K$ has a zero-cycle of degree coprime to $p$.
Therefore by Lemma~\ref{lemma33} $\mathcal{X}_K\simeq\Spec K$ and hence $\Hom(\mathcal{X}_K,\zz_p(f)[g])=0$
by Lemma~\ref{zerohom}.

By a transfer argument the composite map
$$\Hom(\mathcal{X},\zz_p(f)[g])\xrightarrow{\res}\Hom(\mathcal{X}_K,\zz_p(f)[g])\xrightarrow{\mathrm{tr}}\Hom(\mathcal{X},\zz_p(f)[g])$$
where the first map is the extension of scalars and the second map is the transfer map,
is the multiplication by $pm$. Therefore $\Hom(\mathcal{X},\zz_p(f)[g])$ is a $p$-torsion group.

The exact sequence
$$\Hom(\mathcal{X},\zz_p(f)[g])\xrightarrow{\cdot p}\Hom(\mathcal{X},\zz_p(f)[g])\to\Hom(\mathcal{X},\zz/p(f)[g])$$
implies now the statement of the lemma.
\end{proof}

\begin{lem}\label{le57}
In the settings of Lemma~\ref{defeta}
the natural homomorphism $$\Hom(\mathcal{X},\zz_p\{b\}[1])\to\Hom(\mathcal{X},\zz/p\{b\}[1])$$ is an isomorphism.
\end{lem}
\begin{proof}
The injectivity of this homomorphism follows from Lemma~\ref{transfer}.
To prove surjectivity it suffices to show that $\Hom(\mathcal{X},\zz_p\{b\}[2])=0$.

We use notation from the proof of Lemma~\ref{defeta}.
Consider $\Hom$ from the exact triangle $R_{p-2}\{b\}\to R_{p-1}\to\mathcal{X}$ to $\zz_p\{b\}[2]$.
We get the following long exact sequence:
$$\Hom(R_{p-2}(b)[2b+1],\zz_p(b)[2b+2])\to\Hom(\mathcal{X},\zz_p(b)[2b+2])\to\Hom(R_{p-1},\zz_p(b)[2b+2]).$$

The last group of this sequence is zero by Lemma~\ref{zerohom}, since $R_{p-1}$ is a pure motive.
The first group of this sequence equals $\Hom(R_{p-2},\zz_p[1])$ by cancellation.

Consider now $\Hom$ from the exact triangle $R_{p-3}\{b\}\to R_{p-2}\to\mathcal{X}$ to $\zz_p[1]$.
We get the following exact sequence:
$$\Hom(\mathcal{X},\zz_p[1])\to\Hom(R_{p-2},\zz_p[1])\to\Hom(R_{p-3}(b)[2b],\zz_p[1]).$$

The last group of this sequence is zero by Lemma~\ref{triv}.
By \cite[Cor.~6.9]{Vo01} and \cite[Prop.~2.7]{Vo96} the first group of this sequence is isomorphic to
$H_{et}^{1,0}(\mathcal{X},\zz_p)\simeq H_{et}^{1,0}(k,\zz_p)=0$.
Therefore $\Hom(R_{p-2},\zz_p[1])=0$ and hence $$\Hom(\mathcal{X},\zz_p\{b\}[2])=0.$$
\end{proof}

Let $\widetilde{\mathcal{X}}=\Cone(\mathcal{X}\to\Spec k)$.

\begin{lem}\label{milnortriv}
In the settings of lemma~\ref{defeta} assume additionally in case II that $b>2^{n-2}-1$.
Define $$\mu\in\Hom(\mathcal{X},\zz/p\{b\}[1])=\widetilde{H}^{2b+2,b}(\widetilde{\mathcal{X}},\zz/p)$$
as the image of a generator of the cyclic group $\Hom(\mathcal{X},\zz_p\{b\}[1])$ under
the natural map $\Hom(\mathcal{X},\zz_p\{b\}[1])\xrightarrow{\simeq}\Hom(\mathcal{X},\zz/p\{b\}[1])$.
Then for all $0\le i\le n-2$ we have $Q_i(\mu)=0$.
\end{lem}
\begin{proof}
We use notation from the proof of Lemma~\ref{defeta}.

The exact triangle $R_{p-1}\to\mathcal{X}\to R_{p-2}\{b\}[1]$ gives
the exact sequence
\begin{align*}
&\Hom(R_{p-2}\{b\}[1],\zz/p(b+p^i-1)[2b+2p^i])\to\\
&\to\Hom(\mathcal{X},\zz/p(b+p^i-1)[2b+2p^i])\to\Hom(R_{p-1},\zz/p(b+p^i-1)[2b+2p^i]),
\end{align*}
and $Q_i(\mu)\in\Hom(\mathcal{X},\zz/p(b+p^i-1)[2b+2p^i])$.

Since $2(b+p^i-1)<2b+2p^i$ and $R_{p-1}$ is a pure motive, the group
$$\Hom(R_{p-1},\zz/p(b+p^i-1)[2b+2p^i])=0$$ by Lemma~\ref{zerohom}.

Therefore in order to prove that $Q_i(\mu)=0$, it suffices to show that the group
$$\Hom(R_{p-2}\{b\}[1],\zz/p(b+p^i-1)[2b+2p^i])=\Hom(R_{p-2},\zz/p(p^i-1)[2p^i-1])=0.$$

The same exact triangle $R_{p-1}\to\mathcal{X}\to R_{p-2}\{b\}[1]$ gives
the exact sequence
\begin{align*}
\Hom(R_{p-2}(b)[2b+1],\zz/p(p^i-1)[2p^i-1])\to\Hom(\mathcal{X},\zz/p(p^i-1)[2p^i-1])\\
\to\Hom(R_{p-1},\zz/p(p^i-1)[2p^i-1]).
\end{align*}
The first group in this sequence is zero by Lemma~\ref{triv}, since
$b>p^i-1$ by our assumptions. The last group is zero, since $R_{p-1}$ is a pure
motive and $2p^i-1>2(p^i-1)$. Therefore $\Hom(\mathcal{X},\zz/p(p^i-1)[2p^i-1])=0$.

The exact triangle $R_{p-3}\{b\}\to R_{p-2}\to\mathcal{X}$ gives the
exact sequence
\begin{align*}
\Hom(\mathcal{X},\zz/p(p^i-1)[2p^i-1])\to\Hom(R_{p-2},\zz/p(p^i-1)[2p^i-1])\\
\to\Hom(R_{p-3}\{b\},\zz/p(p^i-1)[2p^i-1]).
\end{align*}
The first and the last groups are zero by Lemma~\ref{triv} and by the previous
considerations. The lemma is proved.
\end{proof}

The following lemma is due to V.~Voevodsky. We reproduce its proof
for reader's convenience to make the exposition more self-contained.

\begin{lem}[{\cite[Proof of Lemma~6.7]{Vo03}}]\label{llll}
In the settings of Lemma~\ref{milnortriv} assume additionally in case II
that there exists a $\nu_{n-1}$-variety $Y$ and a morphism $Y\to X$.
Let $\delta\in H^{n,n-1}(\mathcal{X},\zz/p)$.
If $Q_{n-2}\ldots Q_1Q_0(\delta)=0$, then $\delta=0$.
\end{lem}
\begin{proof}
The exact triangle of pointed simplicial schemes
$$\mathcal{X}_+\to\Spec k_+\to\widetilde{\mathcal{X}}$$
defines an isomorphism $H^{d,c}(\mathcal{X},\zz/p)\to
\widetilde{H}^{d+1,c}(\widetilde{\mathcal{X}},\zz/p)$ for any $d>c$.
Thus, we can identify
$\delta$ with its image $\widetilde{\delta}$ in
$\widetilde{H}^{n+1,n-1}(\widetilde{\mathcal{X}},\zz/p)$.

Assume that $\widetilde{\delta}\ne 0$.
We want to show that $Q_iQ_{i-1}\ldots Q_0(\widetilde\delta)\ne 0$
for all $i\le n-2$.
Assume by induction that $Q_{i-1}\ldots Q_0(\widetilde{\delta})\ne 0$.
If $Q_iQ_{i-1}\ldots Q_0(\widetilde\delta)=0$, then by Lemma~\ref{margolislemma}
and by Lemma~\ref{milnortriv}
there exists $v$ such that
$Q_i(v)=Q_{i-1}\ldots Q_0(\widetilde\delta)$. 

A straightforward computation shows that
$v\in\widetilde{H}^{d,c}(\widetilde{\mathcal{X}},\zz/p)$
where $$c=n-i-p^i+1+\tfrac{p^i-p}{p-1}\text{ and } d=n-i+2-2p^i+2\cdot\tfrac{p^i-p}{p-1}$$

But $d-c\le 1$. Therefore by Lemma~\ref{beilinson2} $v=0$. Contradiction.
\end{proof}

\begin{lem}\label{lll}
Assume that $p>2$. 
Then in the settings of Lemma~\ref{llll}
there exists an element $\delta\in\Hom(\mathcal{X},\zz/p(n-1)[n])$
such that $\mu=Q_{n-2}\ldots Q_1Q_0(\delta)$.
\end{lem}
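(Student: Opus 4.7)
The plan is to lift $\mu$ through the iterated Milnor operations $Q_{n-2},\dots,Q_0$ by transferring the problem to the reduced motivic cohomology of $\widetilde{\mathcal{X}}$, where Margolis cohomology vanishes by Lemma~\ref{margolislemma}. To begin, the cofiber sequence $\mathcal{X}\to\zz/p\to\widetilde{\mathcal{X}}$ gives rise to a long exact sequence of motivic cohomology groups. Since $H^{d,c}(\Spec k,\zz/p)=0$ whenever $d>c$, the boundary map $H^{d,c}(\mathcal{X},\zz/p)\to\widetilde H^{d+1,c}(\widetilde{\mathcal{X}},\zz/p)$ is an isomorphism for every $d>c$. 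Transport $\mu$ to an element $\widetilde\mu\in\widetilde H^{2b+2,b}(\widetilde{\mathcal{X}},\zz/p)$; since the Milnor operations commute with the boundary, Lemma~\ref{milnortriv} yields $Q_i(\widetilde\mu)=0$ for $0\le i\le n-2$.

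The heart of the proof is then a descending induction on $i=n-1,\dots,0$ constructing elements $\widetilde\nu_i\in\widetilde H^{*,*}(\widetilde{\mathcal{X}},\zz/p)$ with $\widetilde\nu_{n-1}=\widetilde\mu$, $Q_i(\widetilde\nu_i)=\widetilde\nu_{i+1}$ for $0\le i\le n-2$, and $Q_j(\widetilde\nu_i)=0$ for all $0\le j<i$. The primary lift at step $i\mapsto i-1$ exists by Margolis vanishing applied to $Q_{i-1}$ (Lemma~\ref{margolislemma} is available since $i-1\le n-2<n$), using the inductive vanishing $Q_{i-1}(\widetilde\nu_i)=0$. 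To arrange the secondary vanishings $Q_j(\widetilde\nu_{i-1})=0$ for $j<i-1$, modify $\widetilde\nu_{i-1}$ successively by elements of $\mathrm{im}\,Q_{i-1}$, a modification that preserves the primary identity because $Q_{i-1}^2=0$. The obstructions $Q_j(\widetilde\nu_{i-1})$ lie in $\ker Q_{i-1}=\mathrm{im}\,Q_{i-1}$, by the odd-$p$ anti-commutation $Q_jQ_{i-1}+Q_{i-1}Q_j=0$ from Lemma~\ref{milnormulti} combined with the inductive hypothesis $Q_j(\widetilde\nu_i)=0$.

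Once the induction is complete, set $\widetilde\delta=\widetilde\nu_0\in\widetilde H^{n+1,n-1}(\widetilde{\mathcal{X}},\zz/p)$; the isomorphism of the first step identifies $\widetilde\delta$ with an element $\delta\in H^{n,n-1}(\mathcal{X},\zz/p)=\Hom(\mathcal{X},\zz/p(n-1)[n])$ for which $Q_{n-2}\cdots Q_1Q_0(\delta)=\mu$ by construction.

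The main obstacle is controlling the cascade of higher-order obstructions generated in the secondary-vanishing step: each correction may itself introduce obstructions one level deeper in the Koszul-type complex generated by the exterior algebra of the $Q_i$. The cascade closes by combining Margolis vanishing throughout the full range $0\le i\le n$ (broader than the single index needed for the primary lift) with the Beilinson-Lichtenbaum bound of Lemma~\ref{beilinson2}, which forces elements of sufficiently low codimension in $\widetilde H^{*,*}(\widetilde{\mathcal{X}},\zz/p)$ to vanish. The hypothesis $p>2$ is essential: only then does the anti-commutativity of the $Q_i$ hold cleanly (without the $\xi$-correction in Lemma~\ref{milnormulti}(3)), so the cascade remains amenable to Margolis-style resolution.
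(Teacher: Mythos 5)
Your skeleton is right: transport $\mu$ to the reduced motivic cohomology of $\widetilde{\mathcal{X}}$ via the boundary isomorphism, then build the lift iteratively using Margolis vanishing. But the way you propose to close the induction diverges from the paper's proof and, as written, leaves a genuine gap.

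You strengthen the inductive hypothesis to include \emph{all} secondary vanishings $Q_j(\widetilde\nu_i)=0$ for $j<i$, and then note that after a primary lift the obstructions $Q_j(\widetilde\nu_{i-1})$ land only in $\ker Q_{i-1}=\mathrm{im}\,Q_{i-1}$, not necessarily in $0$. The correction scheme you invoke to kill these obstructions — successively subtracting elements of $\mathrm{im}\,Q_{i-1}$ — does preserve the primary identity, but subtracting $Q_{i-1}(\beta)$ changes $Q_j$ by $-Q_{i-1}Q_j(\beta)$, so you are merely moving the obstruction around inside $\mathrm{im}\,Q_{i-1}$, not eliminating it: the ``cascade'' you flag is a real problem and the two sentences you offer do not close it. The paper avoids this entirely. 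At the step producing $\eta_{n-s-1}$, the only thing needed is that the single element $v:=Q_{n-s-1}(\eta_{n-s})$ is \emph{actually zero}, and Rost--Voevodsky prove this directly: first, $Q_{n-2}\cdots Q_{n-s}(v)=\pm Q_{n-s-1}(\eta)=0$ by anti-commutation and Lemma~\ref{milnortriv}; then, iterating a descent claim (for each $t$, if $Q_{n-t}$ kills $Q_{n-t-1}\cdots Q_{n-s}(v)$ then $Q_{n-t-1}\cdots Q_{n-s}(v)=0$), one peels off the $Q$'s one at a time until $v=0$. Each step of the descent produces a Margolis preimage $w$ whose bidegree $(d,c)$ satisfies $d\le c+1$ by a direct computation, so $w=0$ by Lemma~\ref{beilinson2}. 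That explicit degree-chasing is the substantive content missing from your sketch.

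You also misattribute the role of the hypothesis $p>2$. Anti-commutativity $Q_iQ_j+Q_jQ_i=0$ holds for all $p$ by Lemma~\ref{milnormulti}(2); the $\xi$-correction you mention appears only in the Leibniz rule, part (3), which the paper's proof of this lemma does not use. The hypothesis $p>2$ enters solely in the numerical verification that the Margolis preimage $w$ satisfies $d\le c+1$, which puts it inside the Beilinson--Lichtenbaum vanishing range. (For $p=2$ the inequality fails, which is why the paper proves the $p=2$ case separately in Lemma~\ref{finallemma} via the injectivity statement of Lemma~\ref{milnorinj}.)
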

\begin{proof}
As in the previous lemma it suffices to prove this identity
in the reduced motivic cohomology of $\widetilde{\mathcal{X}}$.
Let $\eta\in\widetilde H^{2b+2,b}(\widetilde{\mathcal{X}},\zz/p)$
be the image of $\mu$.

Assume we have constructed a sequence of elements
$\eta_{n-2},\eta_{n-3},\ldots,\eta_{n-s}$ for some $2\le s<n$
such that $\eta_{n-i+1}=Q_{n-i}(\eta_{n-i})$ and $$\eta=Q_{n-2}Q_{n-3}\ldots
Q_{n-s}(\eta_{n-s}).$$

For $s=2$ such an element $\eta_{n-2}$ with $\eta=Q_{n-2}(\eta_{n-2})$ exists
by Lemma~\ref{margolislemma}, since $Q_{n-2}(\eta)=0$ by Lemma~\ref{milnortriv}.

We claim that there exists $\eta_{n-s-1}$ satisfying the same
conditions. To prove this it suffices
to show that $v:=Q_{n-s-1}(\eta_{n-s})$ equals $0$.

Consider
\begin{align*}
Q_{n-2}Q_{n-3}\ldots Q_{n-s}(v)=Q_{n-2}Q_{n-3}\ldots Q_{n-s}Q_{n-s-1}(\eta_{n-s})\\
=\pm Q_{n-s-1}(Q_{n-2}Q_{n-3}\ldots Q_{n-s}(\eta_{n-s}))
=\pm Q_{n-s-1}(\eta)=0,
\end{align*}
since $0\le n-s-1\le n-3$.

A straightforward computation shows that
$$\eta_{n-s}\in\widetilde H^{2b+s-2\cdot\tfrac{p^{n-1}-p^{n-s}}{p-1}+1,b+s-1-
\tfrac{p^{n-1}-p^{n-s}}{p-1}}(\widetilde{\mathcal{X}},\zz/p)$$
and
$$v\in\widetilde H^{2b+s-2\cdot\tfrac{p^{n-1}-p^{n-s}}{p-1}+2p^{n-s-1},
b+s-2-\tfrac{p^{n-1}-p^{n-s}}{p-1}+p^{n-s-1}}(\widetilde{\mathcal{X}},\zz/p).$$

Let $2\le t\le s$. We claim that if $Q_{n-t}(Q_{n-t-1}\ldots Q_{n-s}(v))=0$,
then $$Q_{n-t-1}\ldots Q_{n-s}(v)=0.$$ Indeed, if $Q_{n-t}(Q_{n-t-1}\ldots Q_{n-s}(v))=0$,
then there exists $w$ such that $$Q_{n-t}(w)=Q_{n-t-1}\ldots Q_{n-s}(v).$$
A straightforward computation shows that
$w\in\widetilde H^{d,c}(\widetilde{\mathcal{X}},\zz/p)$,
where $$c=\tfrac{p^{n-t}-1}{p-1}-1+p^{n-s-1}+t-p^{n-t}$$ and
$$d=2\cdot\tfrac{p^{n-t}-1}{p-1}+2p^{n-s-1}+t-2p^{n-t}+1.$$
Another straightforward computation shows that $d\le c+1$ if $p>2$,
and by Lemma~\ref{beilinson2} $w=0$. Therefore $v=0$.
\end{proof}

\noindent
{\it Proof of Theorem~\ref{mainthm} for $p>2$:}
The element $\delta$ constructed in Lemma~\ref{lll} can be identified with
an element $u\in H_{et}^n(k,\mu_p^{\otimes(n-1)})$ by Lemma~\ref{l1}.
It remains to show that for any field extension $K/k$ 
$$u_K=0\text{ iff }X_K\text{ has a zero-cycle of degree coprime to }p.$$
By construction $u_K=0$ iff $\delta_K=0$. By Lemma~\ref{llll}
$\delta_K=0$ iff $\mu_K=0$. By Lemma~\ref{defeta}
$\mu_K=0$ iff $X_K$ has a zero-cycle of degree coprime to $p$.

Finally, by Lemma~\ref{defeta} and by Lemma~\ref{le57} the group $H^{2b+1,b}(\mathcal{X},\zz/p)$ and hence
the group $H^{n,n-1}(\mathcal{X},\zz/p)$ is cyclic. This implies
the uniqueness of $u$. The theorem is proved.

\section{Binary motives}\label{sec6}
In this section we investigate the structure of binary direct summands of
smooth projective varieties, i.e., of motives which over a field extension of the base field
become isomorphic to a direct sum of {\it two} twisted Tate motives. In this section we assume
that the characteristic of the base field $k$ is $0$.

\begin{thm}\label{binmot}
Let $X$ be a smooth projective irreducible variety over a field $k$ of characteristic $0$ with no
zero-cycles of odd degree. Denote by $\mathcal{X}$ the motive of the standard simplicial scheme associated with $X$.
Let $M$ be a direct summand of $X$ such that
we have an exact triangle
\begin{align}\label{eqtr61}
\mathcal{X}\{d\}\to M\to\mathcal{X}\xrightarrow{\mu'}\mathcal{X}\{d\}[1]
\end{align}
in the category $\DM$ with $\zz_2$-coefficients.
Assume that one of the following two conditions holds:

(a) $\dim X=d>0$ or

(b) there exists a $\nu_{n-1}$-variety $Y$ for some $n\ge 2$ and a morphism $Y\to X$, the motive $M\{\dim X-d\}$ is a
direct summand of $X$ in the category of Chow motives with $\zz_2$-coefficients, and $2^{n-2}-1<d<2^{n-1}$.

Then
\begin{enumerate}
\item[(I)] $d=2^{n-1}-1$ for some $n\ge 2$ in case (a) and $d=2^{n-1}-1$ in case (b).
\item[(II)] There exists a (functorial) element $u\in H_{et}^n(k,\zz/2)$
such that for any field extension $K/k$
we have $u_K=0$ iff $X_K$ has a zero-cycle of odd degree.
\end{enumerate}
\end{thm}

One can notice that Rost's proof
(\cite[Proof of Theorem~9.1 and Lemma~9.10]{Ro07})
implies the following lemma. For reader's convenience we sketch
Rost's proof below to make the exposition more self-contained.
\begin{lem}\label{rostbinary}
Let $X$ be a smooth projective irreducible variety over $k$ of dimension $d>0$
with no zero-cycles of odd degree which possesses a
cycle $r\in\CH_d(X\times X)$ such that over any field extension $F/k$ over which $X$ has a rational point,
$r_{F}$ mod $2$ is a projector
and $(X_{F},r_{F})\simeq\zz/2\oplus\zz/2\{d\}$ in the category of Chow motives
with $\zz/2$-coefficients.

Then $d=2^{n-1}-1$ for some $n$ and
$X$ is a $\nu_{n-1}$-variety.
\end{lem}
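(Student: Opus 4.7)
The plan is to adapt Rost's argument from \cite[\S9]{Ro07}, using the binary decomposition $(X,r)$ in place of the hypothesis of a given special correspondence. The cycle $r$ will play the role of a special correspondence in Rost's degree-formula machinery, and both the dimension formula $d = 2^{n-1}-1$ and the $\nu_{n-1}$-variety property will emerge from a single computation of $s_d(X)$ modulo $4$.

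First I would verify the trace non-triviality $c(r) := \pi_{0*}(r) \not\equiv 0 \pmod{2}$ in $\CH^0(X) = \zz$. Over $k(X)$, the projector $r_{k(X)} \bmod 2$ defines a motive isomorphic to $\zz/2 \oplus \zz/2\{d\}$, which forces $r_{k(X)}$ to be congruent modulo $2$, up to a correspondence whose generic fibre has trivial pushforward, to $[X_{k(X)} \times \pt] + [\pt \times X_{k(X)}]$. A direct calculation of $\pi_{0*}$ on this cycle gives $c(r)_{k(X)} = [X_{k(X)}]$; since $\CH^0(X) \to \CH^0(X_{k(X)})$ is an isomorphism for $X$ geometrically irreducible, this descends to $c(r) = 1$ in $\CH^0(X) = \zz$.

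Then I would run Rost's degree-formula argument from \cite[Proof of Lemma~9.10 and Theorem~9.1]{Ro07} with $r$ in place of the special correspondence. The output is a congruence, modulo $2I(X) + 4\zz_{(2)}$, expressing $c(r) \cdot s_d(X)$ in terms of lower characteristic numbers of $X$; the hypothesis $I(X) \subset 2\zz$ kills the lower-order contributions and forces divisibility by $2$ of every characteristic number of $X$ other than $s_d(X)$, while $c(r) \equiv 1 \pmod{2}$ then forces $s_d(X) \not\equiv 0 \pmod{4}$. Since all characteristic numbers of $X$ are thereby divisible by $2$, the algebraic cobordism class $[X] \in \laz$ lies in $I(2,N)$ for the smallest $N$ with $2^N - 1 \ge d$; non-vanishing of $s_d(X)$ modulo $4$ forces the top-dimensional $\nu_N$-component of $[X]$ to be non-trivial, which is possible only when $\dim X = 2^N - 1$. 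Relabelling $n := N+1$ yields $d = 2^{n-1} - 1$, and by construction $X$ is a $\nu_{n-1}$-variety.

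The principal obstacle is the adaptation of the degree formula from honest morphisms to correspondences: one must either invoke Levine--Morel algebraic cobordism directly or pass to a resolution of the support of $r$, and track divisibilities modulo $4$ with care. The hypothesis $I(X) \subset 2\zz$ enters precisely to annihilate error terms in this bookkeeping, which is the technical heart of Rost's argument.
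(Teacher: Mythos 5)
Your plan is essentially the same one the paper follows: establish $c(r)\equiv 1\pmod 2$, then use Rost's \S 9 machinery to show $b_d(X)\not\equiv 0\pmod 4$, and then cite \cite[Lemma~9.10]{Ro07} for the dimension constraint $d=2^{n-1}-1$ and \cite[Lemma~9.13]{Ro07} for the $\nu_{n-1}$ conclusion (the divisibility by $2$ of all characteristic numbers being automatic once $I(X)\subset 2\zz$, since they are degrees of zero-cycles). Your first step is correct and is exactly the paper's implicit observation that $r_F\equiv H\times 1 + 1\times H$ forces $(\pi_0)_*(r)=c(r)[X]$ with $c(r)\equiv 1$.

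Where the proposal is thin is the crucial middle step. The paper does \emph{not} reformulate things as a degree-formula congruence for a morphism, nor does it resolve the support of $r$; instead it works entirely with Brosnan's Steenrod operations on Chow groups, writing $S_d(r)=\sum_{i=0}^d S^i(r)\,b_{d-i}(X^2)$ and showing term by term that every summand except $S^d(r)=r^2\pmod 2$ admits an integral representative whose degree is divisible by $4$ (the first term via $r_F b_d(X^2)=H\times b_d(X)+b_d(X)\times H$; the middle terms via the vanishing $S^i(H)=0$ for $0<i<d$, which forces $S^i(r_F)=0$ and hence a factor of $2$, followed by a correspondence argument with $r$ to get the second factor of $2$). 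So the ``principal obstacle'' you flag — passing from morphisms to correspondences in the degree formula — is sidestepped entirely in the paper's version: the Steenrod operation $S_d$ is applied directly to the cycle $r$, and the hypothesis $I(X)\subset 2\zz$ enters not to ``annihilate error terms modulo $2I(X)+4\zz_{(2)}$'' but simply to guarantee that degrees of zero-cycles on $X$ and $X^2$ are even. Your sketch would become a proof once you replace the vague appeal to the degree formula with this explicit Steenrod-operation computation; as written, that computation is the entire content of the lemma and is left unaddressed.
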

\begin{proof}
Let $F$ be the function field of $X$.
By assumptions, the cycle $r_{F}\mod 2$ equals $$H\times 1+1\times H',$$ where $H$ and $H'$
are zero-cycles on $X_F$ of degree $1$ mod $2$. Substituting $r$ by $r^t\circ r$
we can assume without loss of generality that $H'=H$.

Set $p=2$ and let $S_\bullet$ (resp. $S^\bullet$) denote the
homological (resp. cohomological) Steenrod operations
in the Chow theory modulo $p$ (see \cite{Br03}).

Then $S_\bullet(\alpha)=S^\bullet(\alpha)b_\bullet(X)$, $\alpha\in\Ch(X)$,
where $b_\bullet$ is the (total) characteristic class defined in section~\ref{charnum}
for partition $(p-1,\ldots,p-1)$
(Since $p=2$, we have $b_\bullet(X)=c_\bullet(-T_X)$).

One has $b_d(X)=S_d([X])$. By our assumptions the cycle
$r$ has the property $$(\pi_0)_*(r)=c(r)[X]$$ with $c(r)=1$ mod $2$, where $\pi_0\colon X^2\to X$
is the first projection.
Moreover, we have $S_\bullet(r)=S^\bullet(r)b_\bullet(X^2)$.

We show first that $\deg b_d(X)\ne 0$ mod $p^2$.
To do this,
it suffices to find an integral representation of
the cycle $S_d(r)$ with degree not divisible by $4$ (see \cite[Proof of Thm.~9.1]{Ro07} for more details).

We have
\begin{equation}\label{rostform}
S_d(r)=\sum_{i=0}^d S^i(r)b_{d-i}(X^2).
\end{equation}
The last term
$S^d(r)=r^2$ mod $2$ and $\deg r^2=2c(r)^2$ is not divisible by $4$. Therefore
it suffices to show that all other terms of this sum
have integral representatives whose degrees are divisible by $4$.

For the first term of~\eqref{rostform} we have
$$r_Fb_d(X^2)=H\times b_d(X)+b_d(X)\times H.$$
This cycle has degree divisible by $4$.

Let $\pi$ be an integral
representative of $S^i(r)$. We want to show that $\deg(\pi b_{d-i}(X^2))$
is divisible by $4$.
By dimension reasons $S^i(H)=0$ for $1\le i\le d-1$. Therefore
$S^i(r_F)=0$ for such $i$'s. Thus, there exists a cycle $\gamma$ over $F$
such that $\pi_F=2\gamma$. It remains to show that
$\deg(\gamma b_{d-i}(X^2))$ is divisible by $2$.

We set $X=X_0=X_1$
and write $X^2=X_0\times X_1$.
We have $$b_{d-i}(X^2)=\sum_{j+r=d-i}b_j(X_0)b_r(X_1).$$
By assumptions $j+r=d-i>0$. Therefore $r>0$ or $j>0$.
In the first case $\deg(\gamma b_j(X_0)b_r(X_1))=
\deg((\pi_0)_*(\gamma b_j(X_0)b_r(X_1)))$.
Therefore it suffices to show that $\deg(\alpha_F\beta)$
is divisible by $2$ for all $\alpha\in\CH^r(X)$, $\beta\in\CH^{d-r}(X_F)$.

Let $\varphi$ be a preimage of $\beta$ under the natural map
$$\CH^{d-r}(X^2)\to\CH^{d-r}(X\times\Spec F).$$ Consider the
cycle $\omega=r(\alpha\times 1)\varphi\in\CH_0(X^2)$.

One has $\omega_F=(H\times 1)(\alpha_F\times 1)\varphi_F+
(1\times H)(\alpha_F\times 1)\varphi_F=
(1\times H)(\alpha_F\times 1)\varphi_F$.
Therefore $\deg\omega=\deg(\alpha_F\beta)=(\pi_0)_*((\alpha_F\times 1)\varphi_F)
=c(r)\deg(\alpha_F\beta)$ is divisible by $2$, since
$X^2$ has no zero-cycles of odd degree.

The case $j>0$ is similar.

Thus $\deg b_d(X)\ne 0$ mod $p^2$. By \cite[Lemma~9.10]{Ro07}
$\dim X=2^{n-1}-1$ for some $n$. Finally, by \cite[Lemma~9.13]{Ro07}
$X$ is a $\nu_{n-1}$-variety.
\end{proof}

Now we return to the settings of Theorem~\ref{binmot}. Define $n$ as the unique number such that
$2^{n-2}-1<d<2^{n-1}$.
Denote by $$\mu\in\Hom(\mathcal{X},\zz_2(d)[2d+1])=\Hom(\mathcal{X},\zz/2(d)[2d+1])$$
(by Lemma~\ref{transfer}) the pull-back
of $\mu'$ under the map of Lemma~\ref{localize}(II). It follows from Lemma~\ref{rostbinary}
that condition (a) of Theorem~\ref{binmot} implies condition (b) with $Y=X$.

The following lemmas were proven by O.~Izhboldin and A.~Vishik
in the case when $X$ is a quadric. Nevertheless their
proofs are general and do not use any specific of quadrics. 
We reproduce them below for
reader's convenience to make the exposition more self-contained.

\begin{lem}[{\cite[Sublemma~6.3]{IzhV00}}]\label{prevlem}
The map
\begin{align}\label{lemma63map}
\Hom(\mathcal{X},\zz_2(c)[e])\to\Hom(\mathcal{X},\zz_2(c+d)[e+2d+1])
\end{align}
induced by the multiplication by $\mu$ is surjective for $e\ge c$. The same holds for
cohomology with $\zz/2$-coefficients.
\end{lem}
\begin{proof}
Consider the morphism from the exact triangle
$$M\to\mathcal{X}\xrightarrow{\mu'}\mathcal{X}(d)[2d+1]$$
to $\zz_2(d+c)[2d+e+1]$.

We get the long exact sequence
\begin{align*}
\Hom(\mathcal{X}(d)[2d+1],\zz_2(d+c)[2d+e+1])\xrightarrow{\mu'^*}
\Hom(\mathcal{X},\zz_2(d+c)[2d+e+1])\\
\to\Hom(M,\zz_2(d+c)[2d+e+1]).
\end{align*}

Notice that $\Hom(\mathcal{X}(d)[2d+1],\zz_2(d+c)[2d+e+1])=
\Hom(\mathcal{X},\zz_2(c)[e])$ and by Lemma~\ref{izhvishik} the map
$\mu'^*$ coincides with the multiplication by $\mu$. Moreover,
\begin{multline*}
\Hom(M,\zz_2(d+c)[2d+e+1])\\
=\Hom(M\{\dim X-d\},\zz_2((\dim X-d)+d+c)[2(\dim X-d)+2d+e+1])=0
\end{multline*}
by Lemma~\ref{zerohom},
since $M\{\dim X-d\}$ is a direct summand of $X$ and $$2(\dim X-d)+2d+e+1-((\dim X-d)+d+c)=\dim X+e-c+1>\dim X.$$
Therefore the multiplication by $\mu$ is surjective for $e\ge c$.

The case of $\zz/2$-coefficients follows from the integral case and from exact triangle~\eqref{zp}.
\end{proof}

Let $\widetilde{\mathcal{X}}=\Cone(\mathcal{X}\to\Spec k)$.

\begin{lem}[{\cite[Sublemma~6.6]{IzhV00}}]\label{milnorinj}
For any $i=0,1,\ldots,n-2$ the Milnor operation
$$Q_i\colon\widetilde H^{e,c}(\widetilde{\mathcal{X}},\zz/2)\to
\widetilde H^{e+2^{i+1}-1,c+2^i-1}(\widetilde{\mathcal{X}},\zz/2)$$
is injective if $e-c=d+2+2^i$.
\end{lem}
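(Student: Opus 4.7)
The strategy is to translate the statement to motivic cohomology of $\mathcal{X}$ via the cofiber sequence $\mathcal{X}_+\to\Spec k_+\to\widetilde{\mathcal{X}}$, peel off multiplication by $\mu$ using Lemma~\ref{prevlem} together with $Q_i(\mu)=0$ from Lemma~\ref{milnortriv}, and conclude by combining Margolis cohomology vanishing (Lemma~\ref{margolislemma}) with Beilinson-Lichtenbaum (Lemma~\ref{beilinson2}).

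Concretely, let $v\in\widetilde H^{d,c}(\widetilde{\mathcal{X}},\zz/2)$ with $d-c=b+2+2^i$ satisfy $Q_i(v)=0$. Since $d>c+1$, identify $v$ with $v'\in H^{d-1,c}(\mathcal{X},\zz/2)$; as $(d-1)-c=b+1+2^i\geq b+1$, Lemma~\ref{prevlem} produces $u\in H^{d-2-2b,c-b}(\mathcal{X},\zz/2)$ of bidegree difference exactly $2^i$ with $v'=\mu\cdot u$. Applying the Leibniz rule of Lemma~\ref{milnormulti} and using $Q_i(\mu)=0$ gives
\[
0=Q_i(v')=\mu\cdot Q_i(u)+\sum_j\xi^{n_j}\varphi_j(\mu)\psi_j(u).
\]
For each $j$ the bidegree of $\varphi_j(\mu)$ lies in the surjectivity range of Lemma~\ref{prevlem} (as $a_j>2b_j$ forces $a_j>b_j$), so $\varphi_j(\mu)=\mu\cdot z_j$ and the error factors as $\mu\cdot W$ for a suitable $W$ depending on $u$. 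Hence $\mu\cdot(Q_i(u)+W)=0$.

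From the exact triangle $\mathcal{X}\{b\}\to M\to\mathcal{X}\to\mathcal{X}\{b\}[1]$ of Lemma~\ref{triang} and Lemma~\ref{zerohom} applied to the pure motive $M$ of dimension $b$, one checks that the kernel of multiplication by $\mu$ on $H^{a,c'}(\mathcal{X})$ vanishes whenever $a>c'$. The bidegree of $Q_i(u)+W$ has first index exceeding the second by $2^{i+1}>0$, so this yields $Q_i(u)=-W$. A bidegree analysis of the error — using $a_j>2b_j$, $a_j'>2b_j'$, $n_j>0$, and systematically reducing each $z_j\psi_j(u)$ using the identities $Q_k(\mu)=0$ for $k\leq n-2$ — shows that each contribution to $W$ vanishes, giving $Q_i(u)=0$.

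Transferring $u$ to $\widetilde H^{d-1-2b,c-b}(\widetilde{\mathcal{X}},\zz/2)$, its bidegree difference is $2^i+1$. Margolis cohomology vanishing produces $u=Q_i(u'')$ with $u''$ of bidegree difference $1$; Lemma~\ref{beilinson2} forces $u''=0$, whence $u=0$ and $v=0$. The hardest step is the vanishing of the error terms in the Leibniz rule: unlike the odd-prime case treated in Lemma~\ref{lll}, for $p=2$ the class $\xi=[-1]\in k^\times/k^{\times 2}$ may be nontrivial, so one must trace each operation $\varphi_j$ through the motivic Steenrod algebra and exploit $Q_k(\mu)=0$ for all $k\leq n-2$ to absorb every contribution.
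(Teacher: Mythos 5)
Your overall skeleton — cofiber sequence to $\mathcal{X}$, surjectivity of $\cdot\mu$, $Q_i(\mu)=0$, Margolis vanishing, Beilinson--Lichtenbaum — matches the paper's, but the \emph{order} in which you apply the steps is reversed, and that reversal creates a real gap.

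The paper first uses the hypothesis $Q_i(v)=0$ together with Margolis vanishing (Lemma~\ref{margolislemma}) to write $v=Q_i(t)$, transfers $t$ to $w\in H^{*,*}(\mathcal{X})$, and only then peels off $\mu$: $w=\mu u$. The numerology is arranged so that this $u$ lands precisely on the \emph{diagonal} bidegree $(e,e)$. By Lemma~\ref{beilinson2} the restriction $\pr^*\colon \Hom(\zz/2,\zz/2(e)[e])\to\Hom(\mathcal{X},\zz/2(e)[e])$ is an isomorphism, so $u$ comes from $K^M_e(k)/2$, and then \emph{every} operation with positive cohomological excess — including $Q_i$ and all the auxiliary $\psi_j$ in the Cartan formula — kills $u$ for the trivial reason that $\Hom(\zz/2,\zz/2(f)[g])=0$ when $g>f$. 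Thus the entire Leibniz expansion of $Q_i(\mu u)$ collapses to $Q_i(\mu)\cdot u=0$, and $v=\tau^*(Q_i(\mu u))=0$.

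In your version you instead peel $\mu$ off $v'$ directly, landing at $u\in H^{d-2-2b,c-b}(\mathcal{X})$ whose bidegree difference is $2^i$, \emph{not} zero. Since $u$ is not on the diagonal, the Beilinson--Lichtenbaum mechanism that trivializes $Q_i(u)$ and $\psi_j(u)$ is unavailable, and you are stuck having to prove that the error term $W=\sum_j \xi^{n_j} z_j\psi_j(u)$ vanishes. You assert this via ``a bidegree analysis of the error \dots\ shows that each contribution to $W$ vanishes,'' but no such analysis is given, and I do not see how one would make it work: $\psi_j(u)$ lives in bidegrees that depend on the free parameters $d,c$ and there is no vanishing lemma in the paper that applies to it. You even flag this as ``the hardest step'' without resolving it. So the proposal, as written, does not prove the lemma; the fix is to apply Margolis vanishing to $v$ \emph{before} factoring through $\mu$, as the paper does, so that the class you ultimately need to control sits at the diagonal where the vanishing is automatic.

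A smaller point: your claim that the kernel of $\cdot\mu$ on $H^{a,c'}(\mathcal{X})$ vanishes for $a>c'$ is correct — it follows from the triangle of Lemma~\ref{triang} and the vanishing $\Hom(M,\zz/2(c'+b)[a+2b])=0$ from Lemma~\ref{zerohom} when $a>c'$ — and your endgame (Margolis once more, then Lemma~\ref{beilinson2} at difference $1$) is also sound. But both are downstream of the unproven vanishing of $W$.
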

\begin{proof}
Let $v\in\widetilde H^{e,c}(\widetilde{\mathcal{X}},\zz/2)$ with $Q_i(v)=0$.
We want to show that then $v=0$.

Since $Q_i(v)=0$, by Lemma~\ref{margolislemma} there exists
$$t\in\widetilde H^{e-2^{i+1}+1,c-2^i+1}(\widetilde{\mathcal{X}},\zz/2)$$
such that $v=Q_i(t)$.

On the other hand, the exact triangle of pointed simplicial schemes
$$\widetilde{\mathcal{X}}[-1]\overset{\tau}{\to}\mathcal{X}_+\to\Spec k_+$$
induces with help of Lemma~\ref{zerohom} an isomorphism
$$\tau^*\colon H^{e-2^{i+1},c-2^i+1}(\mathcal{X},\zz/2)\to
\widetilde H^{e-2^{i+1}+1,c-2^i+1}(\widetilde{\mathcal{X}},\zz/2).$$

Therefore there exists $$w\in\Hom(\mathcal{X},\zz/2(c-2^i+1)[e-2^{i+1}])$$
with $\tau^*(w)=t$. Moreover, by Lemma~\ref{prevlem} there exists
$$u\in\Hom(\mathcal{X},\zz/2(c-2^i+1-d)[e-2^{i+1}-2d-1])$$
which maps to $w$ under the homomorphism~\eqref{lemma63map}.

By Lemma~\ref{milnormulti}(III) we have $Q_i(\mu\cdot u)=Q_i(\mu)\cdot u+\mu\cdot
Q_i(u)+\sum\xi^{n_j}\varphi_j(\mu)\psi_j(u)$,
where $\xi$ is the class of $-1$ in $\Hom(\Spec k,\zz/2(1)[1])$,
$n_j>0$ and $\varphi_j$, $\psi_j$ are some homogeneous cohomological operations
of some bidegrees $(*)[*']$ with $*'>2*\ge 0$.

Since $f:=e-2^{i+1}-2d-1=c-2^i+1-d$ by assumptions, Lemma~\ref{beilinson2}
implies an isomorphism $\pr^*\colon\Hom(\Spec k,\zz/2(f)[f])\to\Hom(\mathcal{X},\zz/2(f)[f])$, where
$\pr\colon\mathcal{X}\to\Spec k$ is the structural map.
Therefore there exists $$u_0\in\Hom(\Spec k,\zz/2(f)[f])$$ with $u=\pr^*(u_0)$.

On the other hand, since by Lemma~\ref{zerohom} $\Hom(\Spec k,\zz/2(h)[g])=0$ for all $g>h$,
we have $Q_i(u_0)=0$ and $\psi_j(u_0)=0$.
Since the Milnor operations commute with pull-backs, we also have
$Q_i(u)=0$ and $\psi_j(u)=0$.

Summarizing we obtain:
$$v=Q_i(t)=Q_i(\tau^*(w))=\tau^*(Q_i(w))=\tau^*(Q_i(\mu\cdot u))=
\tau^*(Q_i(\mu)\cdot u)=0,$$
where the last equality follows from Lemma~\ref{milnortriv}.
\end{proof}

\begin{lem}[{\cite[Sublemma~6.7]{IzhV00}}]\label{finallemma}
We have $d=2^{n-1}-1$ and Lemma~\ref{lll} holds for $p=2$, i.e., there exists
$\delta\in\Hom(\mathcal{X},\zz/2(n-1)[n])$
such that $\mu=Q_{n-2}\ldots Q_1Q_0(\delta)$.
\end{lem}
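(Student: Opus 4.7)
The plan is to mimic the proof of Lemma~\ref{lll} but with Lemma~\ref{milnorinj} replacing the Beilinson step in the descent. The overall inductive structure carries over unchanged; the one place the previous argument used $p>2$ in an essential way was in killing a Margolis preimage via the inequality $d\le c+1$, and Lemma~\ref{milnorinj} has been formulated precisely to provide the replacement in the $p=2$ setting.

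First I would work in the reduced motivic cohomology of $\widetilde{\mathcal{X}}$, let $\eta\in\widetilde H^{2b+2,b}(\widetilde{\mathcal{X}},\zz/2)$ be the image of $\mu$, and attempt to construct a ladder $\eta_{n-2},\eta_{n-3},\ldots,\eta_0$ with $Q_k(\eta_k)=\eta_{k+1}$ (so that $\eta=Q_{n-2}Q_{n-3}\cdots Q_k(\eta_k)$). The base case $\eta_{n-2}$ exists because $Q_{n-2}(\eta)=0$ by Lemma~\ref{milnortriv} and Margolis vanishing (Lemma~\ref{margolislemma}) furnishes a preimage. Once $\eta_0$ is produced, the exact triangle $\mathcal{X}_+\to\Spec k_+\to\widetilde{\mathcal{X}}$ combined with Lemma~\ref{beilinson2} yields an isomorphism $\tau^*\colon H^{n,n-1}(\mathcal{X},\zz/2)\xrightarrow{\sim}\widetilde H^{n+1,n-1}(\widetilde{\mathcal{X}},\zz/2)$, through which $\eta_0$ pulls back to the desired $\delta\in\Hom(\mathcal{X},\zz/2(n-1)[n])$.

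The inductive step, as in Lemma~\ref{lll}, reduces to showing $v:=Q_{n-s-1}(\eta_{n-s})=0$ (so that Margolis vanishing delivers $\eta_{n-s-1}$). The (anti-)commutativity $Q_iQ_j=Q_jQ_i$ of the Milnor operations in $\zz/2$ (Lemma~\ref{milnormulti}(2)) together with $Q_{n-s-1}(\eta)=0$ gives $Q_{n-2}\cdots Q_{n-s}(v)=\pm Q_{n-s-1}(\eta)=0$. One then descends one operation at a time, inferring $Q_{n-t-1}\cdots Q_{n-s}(v)=0$ from $Q_{n-t}(Q_{n-t-1}\cdots Q_{n-s}(v))=0$ for $t=2,3,\ldots,s$ in turn. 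For $p>2$ the author killed the Margolis preimage $w$ of each step via the inequality $d\le c+1$ and Lemma~\ref{beilinson2}; specialising the same bidegree formula to $p=2$ one instead gets $d-c=2^{n-s-1}+1>1$, so that argument collapses. In its place I would invoke Lemma~\ref{milnorinj}, which for $p=2$ supplies injectivity of $Q_{n-t}$ at precisely the bidegrees encountered in the descent.

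The main obstacle is the bidegree bookkeeping: one must check that each intermediate element $Q_{n-t-1}\cdots Q_{n-s}(v)$ satisfies the hypothesis $d-c=b+2+2^{n-t}$ of Lemma~\ref{milnorinj} (or falls within the slightly larger range $d-c\le b+2+2^{n-t}$ where the proof of that lemma still works, since the critical step there is an application of Lemma~\ref{beilinson2} at the diagonal point $e=d-2^{i+1}-2b=c-2^i+1-b$). This is an elementary but careful tracking of the cumulative $d-c$ shifts of the composed Milnor operations, parallel to the ``straightforward computation'' performed in Lemma~\ref{lll}, with the specialisation $b=2^{n-1}-1$ substituted throughout.
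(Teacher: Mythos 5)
Your proposal diverges from the paper's proof in the choice of inductive ordering, and that choice is not cosmetic: it breaks the bidegree bookkeeping at exactly the point where you invoke Lemma~\ref{milnorinj}.

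You build the ladder by peeling off $Q_{n-2}$ first and descending to $Q_0$ (the ordering of Lemma~\ref{lll}), so that $\eta = Q_{n-2}\cdots Q_{n-s}(\eta_{n-s})$, $v := Q_{n-s-1}(\eta_{n-s})$, and the intermediate elements you need to kill are $Q_{n-t-1}\cdots Q_{n-s}(v)$ for $2\le t\le s$. Computing their bidegree with $b = 2^{n-1}-1$: since $\eta\in\widetilde H^{2b+2,b}$ has $d-c = b+2$ and each $Q_i$ raises $d-c$ by $2^i$, one gets $d(\eta_{n-s})-c(\eta_{n-s}) = (b+2) - (2^{n-1}-2^{n-s}) = 1+2^{n-s}$, hence $d(v)-c(v) = 1+2^{n-s}+2^{n-s-1}$, and
$$d\bigl(Q_{n-t-1}\cdots Q_{n-s}(v)\bigr) - c\bigl(Q_{n-t-1}\cdots Q_{n-s}(v)\bigr) = 1 + 2^{n-s-1} + 2^{n-t}.$$
Lemma~\ref{milnorinj} asks for $d-c = b+2+2^{n-t} = 2^{n-1}+1+2^{n-t}$; your bidegree falls short of this by $2^{n-1}-2^{n-s-1}>0$ for every $s\ge 1$. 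So the operations $Q_{n-t}$ are \emph{not} known to be injective at the bidegrees you actually encounter, and your parenthetical hope that the proof of Lemma~\ref{milnorinj} extends to the range $d-c\le b+2+2^i$ does not hold as stated: that proof requires both Lemma~\ref{prevlem} (which needs $d\ge c$ on the bidegree of $u$) and Lemma~\ref{beilinson2} (which needs $d\le c$), forcing $d=c$ and thereby pinning $d-c$ to a single value rather than an inequality. In the range you land in, $u$ would have $d<c$, the surjectivity of multiplication by $\mu$ is unavailable, and the argument does not go through.

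The paper avoids this entirely by running the induction in the \emph{opposite} order: it sets $\eta_{-1}=\eta$ and constructs $\eta_0,\eta_1,\ldots,\eta_{n-2}$ with $\eta_m = Q_{m+1}(\eta_{m+1})$, i.e., it peels off $Q_0$ first and $Q_{n-2}$ last. Then $v := Q_{m+1}(\eta_m)$ always has $d-c = b+3$ independently of $m$, and $Q_{t-1}\cdots Q_0(v)$ has $d-c = b+2+2^t$ for $0\le t\le m$ --- exactly the hypothesis of Lemma~\ref{milnorinj} for $Q_t$. The descent $Q_m\cdots Q_0(v)=0 \Rightarrow Q_{m-1}\cdots Q_0(v)=0 \Rightarrow\cdots\Rightarrow v=0$ then follows by repeated application of the injectivity lemma with no slack at all. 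So the reversal of the ladder is the essential new idea here; simply transplanting the $p>2$ ordering and swapping in Lemma~\ref{milnorinj} leaves a genuine gap.
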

\begin{proof}
Using Lemma~\ref{zerohom} we can
identify $\mu$ with its image in the reduced motivic cohomology
$\widetilde H^{2d+2,d}(\widetilde{\mathcal{X}},\zz/p)$. We denote this image as $\eta=\eta_{-1}$.

Assume by induction that we have constructed an element $\eta_m$,
$-1\le m\le n-3$, such that $\eta=Q_m\ldots Q_1Q_0(\eta_m)$.
We want to show that there exists $\eta_{m+1}$ with $$\eta_m=Q_{m+1}(\eta_{m+1}).$$

By Lemma~\ref{margolislemma} it suffices to show that
$v:=Q_{m+1}(\eta_m)=0$. We have
\begin{align*}
Q_mQ_{m-1}\ldots Q_0(v)=Q_mQ_{m-1}\ldots Q_0Q_{m+1}(\eta_m)\\
=Q_{m+1}Q_mQ_{m-1}\ldots Q_0(\eta_m)=Q_{m+1}(\eta)=0
\end{align*}
by Lemma~\ref{milnortriv}.

A straightforward computation shows that for any $0\le t\le m$
the element
$$Q_{t-1}\ldots Q_0(v)\in \widetilde H^{e,c}(\widetilde{\mathcal{X}},\zz/2)$$
with $e-c=d+2+2^t$. By Lemma~\ref{milnorinj} $Q_t$
is injective on $\widetilde H^{e,c}(\widetilde{\mathcal{X}},\zz/2)$
for such $c$ and $e$. Therefore the equality $Q_m Q_{m-1}\ldots Q_0(v)=0$ implies $v=0$.

Thus, in this way we can construct an element
$$\widetilde\delta\in \widetilde H^{2d-2^n+n+3,d-2^{n-1}+n}(\widetilde{\mathcal{X}},\zz/2)$$
such that $Q_{n-2}\ldots Q_1Q_0(\widetilde\delta)=\mu$ (recall that we identify $\mu$ with its image
in $\widetilde H^{2d+2,d}(\widetilde{\mathcal{X}},\zz/2)$). By the assumptions the variety $X$ has no $0$-cycles
of odd degree. Therefore by Lemma~\ref{defeta} $\mu\ne 0$. Therefore $\widetilde\delta\ne 0$. On the other hand,
if $d<2^{n-1}-1$, then $2d-2^n+n+2\le d-2^{n-1}+n$. Therefore by \cite[Cor.~6.9]{Vo01} and \cite[Prop.~2.7]{Vo96}
$$\widetilde H^{2d-2^n+n+3,d-2^{n-1}+n}(\widetilde{\mathcal{X}},\zz/2)=0.$$ Hence $d=2^{n-1}-1$ and
$\widetilde\delta\in\widetilde H^{n+1,n-1}(\widetilde{\mathcal{X}},\zz/2)$ and can be identified with an element
$\delta\in\Hom(\mathcal{X},\zz/2(n-1)[n])$.

Alternatively, in case~(a) of Theorem~\ref{binmot} the claim that $d=2^{n-1}-1$ follows from Lemma~\ref{rostbinary}.
\end{proof}

\noindent
{\it Proof of Theorem~\ref{binmot}:}
The first part of the Theorem follows from Lemma~\ref{finallemma}.

The proof of the second assertion repeats word by word the proof
of Theorem~\ref{mainthm} given at the end of the previous section
with Lemma~\ref{lll} replaced by Lemma~\ref{finallemma}.

\medskip

\noindent
{\it Proof of Theorem~\ref{mainthm} for $p=2$:}
This is a particular case of Theorem~\ref{binmot}, since Lemma~\ref{triang} gives us exact triangle~\eqref{eqtr61}.

\section{Compression of varieties and some supplementary results}\label{seccompr}
The goal of this section is to provide a certain compression algorithm
for smooth projective varieties. We assume that $\mathrm{char}\,k=0$ and fix a prime $p$.

\begin{lem}\label{hironak}
Let $X$ be a smooth projective irreducible variety and $Y$ an irreducible
closed subvariety of $X$ of minimal dimension such that $Y_{k(X)}$
has a zero-cycle of degree coprime to $p$.

Then there exists a smooth projective irreducible variety
$\widetilde Y$ which is birational to $Y$ together with a morphism $\widetilde Y\to Y$ and
such that $\widetilde Y_{k(X)}$ has a zero-cycle of degree coprime to $p$.
\end{lem}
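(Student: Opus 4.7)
The plan is to resolve the singularities of $Y$ by Hironaka (available since $\mathrm{char}\,k=0$) and to use the minimality of $\dim Y$ to control what happens on the exceptional locus via the localization sequence for zero-cycles. The existence of a zero-cycle of degree coprime to $p$ on $Y_{k(X)}$ should propagate to the resolution $\widetilde{Y}$ because the only obstruction could come from a piece of $Y$ of strictly smaller dimension, where by minimality no such zero-cycle can live.

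First I would apply Hironaka's theorem to produce a proper birational morphism $\pi\colon\widetilde{Y}\to Y$ with $\widetilde{Y}$ smooth projective, which is an isomorphism over an open dense subset $U\subseteq Y$ whose complement $Z:=Y\setminus U\subsetneq Y$ is closed (one may take $Z$ to contain the singular locus of $Y$). By construction every irreducible component $Z_i$ of $Z$ satisfies $\dim Z_i<\dim Y$, and each $Z_i$ is a closed subvariety of $X$. By the minimality assumption on $Y$, no $(Z_i)_{k(X)}$ admits a zero-cycle of degree coprime to $p$; consequently the image of
$$\CH_0(Z_{k(X)})\longrightarrow \CH_0(Y_{k(X)})$$
consists entirely of classes whose degree is divisible by $p$.

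Next, let $\alpha\in\CH_0(Y_{k(X)})$ be a given zero-cycle of degree coprime to $p$ and let $E:=\pi^{-1}(Z)$. The localization sequences
$$\CH_0(Z_{k(X)})\to \CH_0(Y_{k(X)})\to \CH_0(U_{k(X)})\to 0,$$
$$\CH_0(E_{k(X)})\to \CH_0(\widetilde{Y}_{k(X)})\to \CH_0(U_{k(X)})\to 0$$
together with the identification $U\cong\pi^{-1}(U)$ allow one to lift the image of $\alpha$ in $\CH_0(U_{k(X)})$ to some class $\tilde\alpha\in\CH_0(\widetilde{Y}_{k(X)})$. Then $\pi_{*}(\tilde\alpha)$ and $\alpha$ have the same image in $\CH_0(U_{k(X)})$, so their difference lies in the image of $\CH_0(Z_{k(X)})$ and thus has degree divisible by $p$ by the previous paragraph. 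Since $\pi$ is proper birational, $\deg\pi_{*}(\tilde\alpha)=\deg\tilde\alpha$, and so
$$\deg\tilde\alpha\equiv \deg\alpha\not\equiv 0\pmod p,$$
which gives the required zero-cycle on $\widetilde{Y}_{k(X)}$.

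The only potentially delicate point is to arrange the resolution so that it is an isomorphism away from a proper closed subvariety $Z\subsetneq Y$ (so that every component of $Z$ has strictly smaller dimension, enabling the use of minimality); this is guaranteed by Hironaka's theorem. Everything else is a direct bookkeeping with the localization sequence and the degree map, and no genuine obstacle is expected.
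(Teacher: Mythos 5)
Your proof is correct and follows essentially the same strategy as the paper: resolve singularities by Hironaka, and use the minimality of $\dim Y$ to show the locus where the resolution fails to be an isomorphism cannot carry the coprime-to-$p$ degree. The paper phrases the degree bookkeeping more elementarily (a zero-cycle on $Y_{k(X)}$ of degree coprime to $p$ must have a piece supported on the smooth locus $U_{k(X)}$ of degree coprime to $p$, and that piece is literally a zero-cycle on $\widetilde Y_{k(X)}$ since $U$ is open in $\widetilde Y$), whereas you route the same observation through the localization exact sequence and proper pushforward; both are valid and say the same thing.
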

\begin{proof}
Let $U\subset Y$ be the open subvariety of smooth points in $Y$ and
$V=Y\setminus U$ the singular locus. We claim that there is a zero-cycle
of degree coprime to $p$ supported in $U_{k(X)}$.
Indeed, assume the contrary. Then all zero-cycles supported on $U_{k(X)}$ have
degrees divisible by $p$. Since there exists a zero-cycle on $Y_{k(X)}$
of degree coprime to $p$, there is an irreducible component $V'$ of $V$
such that the variety $V'_{k(X)}\subset X_{k(X)}$
has a zero-cycle of degree coprime to $p$.
Since $\dim V'<\dim Y$ and $\dim Y$ is minimal by our assumptions, we come
to a contradiction.

Using the standard algorithm of resolution of singularities due to Hironaka
we can find a smooth projective variety $\widetilde Y$ birational to $Y$ which contains $U$ together with a morphism
$\widetilde Y\to Y$ identical on $U$.
This $\widetilde Y$ has a zero-cycle of degree $1$ mod $p$ over $k(X)$.
\end{proof}

A smooth projective irreducible variety $X$ over $k$ is called {\it generically split},
if the motive of $X_{k(X)}$ is a finite direct sum of twisted Tate motives.

\begin{cor}\label{lemmacompr}
Let $X$ be a generically split smooth projective geometrically irreducible
$k$-variety. Let $Y$ be a closed irreducible subvariety of $X$ of minimal dimension
such that the class $[Y_{k(X)}]$ in $\Ch(X_{k(X)})$ is non-zero.
Then there exists a smooth projective irreducible variety
$\widetilde Y$ birational to $Y$ together with a morphism $\widetilde Y\to Y$ and such that $\widetilde Y_{k(X)}$ has
a zero-cycle of degree $1$ mod $p$.
\end{cor}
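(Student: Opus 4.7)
The plan is to reduce the statement to Lemma~\ref{hironak} by producing a zero-cycle on $Y_{k(X)}$ of degree coprime to $p$, exploiting the generically split hypothesis on $X$. Since $X$ is generically split, the motive of $X_{k(X)}$ decomposes as a direct sum of twisted Tate motives, so $\Ch^*(X_{k(X)})$ is a free $\zz/p$-module with a basis indexed by these summands. In particular, the intersection pairing $\Ch^c(X_{k(X)}) \otimes_{\zz/p} \Ch^{\dim X - c}(X_{k(X)}) \to \Ch_0(X_{k(X)}) = \zz/p$ (product followed by the degree map) is non-degenerate.

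First I would apply this perfect pairing to the non-zero class $[Y_{k(X)}] \in \Ch^{\dim X - \dim Y}(X_{k(X)})$ to obtain a class $\xi \in \Ch^{\dim Y}(X_{k(X)})$ with $[Y_{k(X)}] \cdot \xi \ne 0$ in $\zz/p$. Representing $\xi$ by a cycle in general position with respect to $Y_{k(X)}$, the intersection with $Y_{k(X)}$ is then a zero-cycle on $Y_{k(X)}$ of degree coprime to $p$. This already shows that $Y_{k(X)}$ satisfies the hypothesis of Lemma~\ref{hironak}.

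Next I would verify that $Y$ is of minimal dimension among closed subvarieties $Z \subset X$ such that $Z_{k(X)}$ admits a zero-cycle of degree coprime to $p$; this is what is needed to invoke Lemma~\ref{hironak}. One direction (non-vanishing Chow class implies such a zero-cycle) is the intersection argument just given, so the minimal dimension for the Chow-class property is at least that for the zero-cycle property. For the converse, suppose some $Z \subset X$ with $\dim Z < \dim Y$ has a zero-cycle of degree coprime to $p$ on $Z_{k(X)}$. Pick a closed point $P$ of $Z_{k(X)}$ with $[\kappa(P):k(X)]$ coprime to $p$; the morphism $\Spec \kappa(P) \to X$ has image $x \in Z$, and the Zariski closure $W = \overline{\{x\}}$ lies in $Z$ with $\dim W \le \dim Z$. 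Using the motivic decomposition of $X_{k(X)}$ and pushing forward $[P]$ to $\Ch_0(X_{k(X)}) = \zz/p$, one deduces that some closed subvariety of $X$ of dimension at most $\dim W$ has non-vanishing class in $\Ch(X_{k(X)})$, contradicting the minimality of $Y$. Having matched the two notions of minimality, Lemma~\ref{hironak} applied to $Y$ provides the desired smooth projective variety $\widetilde Y$ birational to $Y$ with $\widetilde Y_{k(X)}$ admitting a zero-cycle of degree $1$ mod $p$.

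The main obstacle is precisely this minimality-matching step: passing rigorously from ``$Z_{k(X)}$ admits a zero-cycle of degree coprime to $p$'' to ``some closed subvariety of $X$ of dimension at most $\dim Z$ has non-vanishing class in $\Ch(X_{k(X)})$''. The counterexample of a smooth degree-$p$ hypersurface with a $k$-point shows that a direct implication for the original $Z$ may fail, so one has to find the required subvariety by tracing closed points of $Z_{k(X)}$ back to their images and Zariski closures in $X$, crucially using the generically split structure of $X$ to control the Chow-theoretic consequences.
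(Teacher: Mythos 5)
Your reduction and overall plan correctly mirror the structure of the paper's proof: produce a degree-coprime-to-$p$ zero-cycle on $Y_{k(X)}$ using the generically split structure, then match the two notions of minimality so that Lemma~\ref{hironak} applies. The first step is fine (the paper achieves the same thing by citing \cite[Remark~5.6]{KM06} together with refined intersection rather than the moving lemma, but your non-degenerate-pairing argument is a valid variant). The problem is the minimality-matching step, which you yourself flag as an obstacle but do not close.

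That step is a genuine gap. Passing from ``$Z_{k(X)}$ admits a zero-cycle of degree coprime to $p$'' to ``$\overline{\Ch}_i(X)\neq 0$ for some $i\le\dim Z$'' does not follow by tracing the image $x$ of $P$ back into $Z\subset X$ and taking $W=\overline{\{x\}}$: the non-vanishing of the push-forward of $[P]$ in $\Ch_0(X_{k(X)})=\zz/p$ is a statement in dimension $0$, and nothing in your sketch converts it into a non-zero class in $\Ch_{\le\dim W}(X_{k(X)})$ coming from a $k$-rational subvariety. The paper closes this gap by following \cite[Theorem~5.8]{KM06}. Writing $F=k(X)$, one picks a $p$-coprime extension $K/F$ with $k(Y')\hookrightarrow K$, but then forms the closure $W$ of the image of $\Spec K\to Y'\times X$ (not of $\Spec K\to X$). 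This $W$ is a $k$-subvariety of $Y'\times X$, so $(in\times\id_X)_*[W]$ lies in the \emph{rational} group $\overline{\Ch}(X\times X)$, and it is non-zero because $(\pi_1)_*(in\times\id_X)_*[W]=[K:k(X)]\cdot 1\neq 0$. Then \cite[Corollary~5.4]{KM06} — a structure theorem for $\overline{\Ch}(X\times X)$ of a generically split $X$ — converts the non-vanishing of this push-forward into $\overline{\Ch}_i(X)\neq 0$ for some $i\le\dim Y'$, which yields the minimality. The essential ingredient you are missing is exactly this passage through correspondences on $X\times X$ and the rational Chow groups $\overline{\Ch}$, rather than working directly with cycles on $X$ itself.
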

\begin{proof}
Since $X$ is generically split, by \cite[Remark~5.6]{KM06} and \cite[Prop.~1.5]{Me08}
there exists a closed subvariety $Z\subset X_{k(X)}$ such that $[Y_{k(X)}]\cdot [Z]$ has
degree coprime to $p$.
Since the product $Y_{k(X)}\cdot Z$ in the Chow ring can be
represented by a cycle on the intersection $Y_{k(X)}\cap Z$,
the variety $Y_{k(X)}\cap Z\subset Y_{k(X)}$ has a zero-cycle
of degree coprime to $p$.

By Lemma~\ref{hironak} it suffices to show that
$Y$ has minimal dimension among all closed irreducible subvarieties of $X$
such that $Y_{k(X)}$ has a zero-cycle of degree coprime to $p$.

The proof of this claim copies the proof of \cite[Theorem~5.8]{KM06}.

Let $Y'\stackrel{in}{\hookrightarrow} X$ be a closed irreducible subvariety of $X$ of minimal dimension such that $Y'_{k(X)}$
has a zero-cycle of degree coprime to $p$. It remains to show that $\dim Y'\ge\dim Y$.

For an arbitrary variety $X$ over $k$ we write (following \cite{KM06}) $\Ch(\overline X)$ for the colimit of
$\Ch(X_L)$ over all field extensions $L/k$ and we write $\overline{\Ch}(X)$
for the image of the restriction map $\Ch(X)\to\Ch(\overline X)$. Notice that if $X$ is generically split,
then the groups $\Ch(\overline X)$ and $\Ch(X_{k(X)})$ can be canonically identified.

It suffices to show that $\overline{\Ch}_i(X)$ is non-zero for some $i\le\dim Y'$.
Notice that exactly as in \cite[Remark~5.6 et seq.]{KM06} we have $\Ch_0(\overline X)=\zz/p$.
By \cite[Corollary~5.4]{KM06} it suffices to show that the push-forward
$$(in\times\id_X)_*\colon\overline{\Ch}(Y'\times X)\to\overline{\Ch}(X\times X)$$
is non-zero.

Denote $F=k(X)$. By the assumptions there exists a $p$-coprime field extension $K/F$ such that
$k(Y')\hookrightarrow K$.
Let $W$ be the closure of the image of the morphism $K\to Y'\times X$.
Then the cycle $(in\times\id_X)_*([W])$ is non-zero, since for the second projection
$\pi_1\colon X^2\to X$ we have
$$(\pi_1)_*(in\times\id_X)_*([W])=[K:k(X)]\cdot 1\ne 0.$$
\end{proof}

\begin{dfn}\label{uppermot}
Let $X$ be a smooth projective variety over $k$, let $p$ be a prime, and
$R=(X,\pi)$ be a direct summand of the motive of $X$ with $\zz/p$-coefficients.
We say that $R$ supports $0$-cycles of $X$, if $\Ch^0(R)\ne 0$.
\end{dfn}

\begin{dfn}\label{gensplmot}
Let $X$ be a smooth projective irreducible variety over $k$.
A motive $M=(X,\pi)$ is called a generically split direct summand of $X$, if
$M_{k(X)}$ is a direct sum of twisted Tate motives over $k(X)$.
\end{dfn}

\begin{lem}\label{commonmotive}
Let $X$ be a twisted flag variety over $k$ of inner type and $p$ be a prime number. Let $R$ be an indecomposable generically split
direct summand of the motive of $X$ with $\zz/p$-coefficients supporting $0$-cycles. Let $Y$ be a smooth projective
irreducible variety over $k$. Assume that $X$ has a zero-cycle of degree $1$ mod $p$ over $k(Y)$
and $Y$ has a zero-cycle of degree $1$ mod $p$ over $k(X)$.
Then $R$ is an indecomposable direct summand of $Y$ and $R$ splits over $k(Y)$.
\end{lem}
\begin{proof}
We denote by $pt_X$ (resp. $pt_Y$) a zero-cycle of degree $1$ mod $p$ on $X$ over $k(Y)$ (resp. on $Y$ over $k(X)$).
Since $X$ has a zero-cycle of degree $1$ mod $p$ over $k(Y)$,
there exists by the localization sequence (generic point diagram)
a cycle
$\varphi_1\in\Ch_{\dim X}(X\times Y)$ such that
$\varphi_1\colon pt_X\mapsto pt_Y$ over $k(Y)$.

By symmetry there exists a cycle $\psi_1\in\Ch_{\dim Y}(Y\times X)$
such that $\psi_1\colon pt_Y\mapsto pt_X$ over $k(X)$.

Let $r$ be a projector defining $R$, i.e., $R=(X,r)$, and
consider the cycles $\varphi_2=\varphi_1\circ r$ and $\psi_2=r\circ\psi_1$.
 
Consider now $\End(R_{k(X)})$. Since $R$ is generically split summand of $X$, this is a finite group.
Therefore by the Fitting lemma some power, say $m\ge 1$, of the cycle $(\psi_2\circ\varphi_2)_{k(X)}$
is an idempotent. The cycle $(\psi_2\circ\varphi_2)^m\in\End(R)$ and is non-zero,
since $\psi_2\circ\varphi_2\colon pt_X\mapsto pt_X$ over $k(X)$. Therefore, since $R$ is indecomposable,
the cycle $(\psi_2\circ\varphi_2)^m_{k(X)}$ equals $r_{k(X)}$.

By \cite[Section~8]{CGM05} Rost nilpotence holds for $R$, i.e., for any field extension $K/k$ the kernel
of the natural map $\End(R)\to\End(R_K)$ consists of nilpotent correspondences.
Therefore $(\psi_2\circ\varphi_2)^m=r+n$, where $n$ is some nilpotent element
in $\End(R)$. Since $n$ is nilpotent, $r+n$ is invertible and
$(r+n)^{-1}\circ(\psi_2\circ\varphi_2)^m=r$. Define now
$\psi_3=(r+n)^{-1}\circ\psi_2$ and $\varphi_3=\varphi_2\circ(\psi_2\circ\varphi_2)^{m-1}$. Then $\psi_3\circ\varphi_3=r$, and
therefore $\varphi_3\circ\psi_3$ is a projector on $Y$ and
$(Y,\varphi_3\circ\psi_3)\simeq(X,r)=R$ with mutually inverse isomorphisms
$\varphi_3$ and $\psi_3$.

Over $k(Y)$ the variety $X$ has a zero-cycle of degree coprime to $p$.
Let $L$ be a finite field extension of $k(Y)$ of degree coprime to $p$ such that $X_L$ has a rational point.
By \cite[Thm.~21.20(ii)]{Bor} the variety $X_L$ is rational and, hence, the field extension $L(X_L)/L$ is purely
transcendental. Since the motive $R$ splits over $k(X)$, it also splits over $L(X_L)$ and, hence, over $L$.
Therefore, since the degree $[L:k(Y)]$ is coprime to $p$, the motive $R$ splits already over $k(Y)$.
\end{proof}

\begin{lem}\label{lift}
Let $X$ be a twisted flag variety over a field $k$ and $p$ be a prime number.
Assume that we are given a motivic decomposition of $X$ with $\zz/p$-coefficients of the form
$$X\simeq\bigoplus_{i\in I} R\{i\}$$
for some indecomposable motive $R$ and some multiset $I$ of non-negative indices containing $0$.

Then there is a motivic decomposition of $X$ with $\zz_p$-coefficients of the form
$$X\simeq\bigoplus_{i\in I} \widetilde R\{i\}$$
where the motive $\widetilde R$ is indecomposable and $R\simeq \widetilde R\mod p$.
\end{lem}
\begin{proof}
By \cite[Theorem~4.3]{SZh13} there is a motivic decomposition of $X$ with $\zz_p$-coefficients
of the form
$$X\simeq\bigoplus_{i\in I}\widetilde R_i$$ such that for all $i\in I$ the motives $\widetilde R_i$
are indecomposable and $R\{i\}\simeq\widetilde R_i\mod p$. Let $\widetilde R$ denote one of the motives $\widetilde R_j$, $j\in I$, such that $R\simeq \widetilde R_j\mod p$.
The proof that $\widetilde R_i\simeq\widetilde R\{i\}$ for all $i\in I$ is similar to the proof
of \cite[Theorem~4.3]{SZh13}, where instead of lifting of projectors one applies the same procedure to lift
the isomorphisms $$(\widetilde R_i\mod p)\simeq (\widetilde R\{i\}\mod p).$$
\end{proof}

\section{Serre's question about $\E_8$}\label{sec8}
Let $G$ be a simple linear algebraic group of inner type over $k$
with $\mathrm{char}\,k=0$. We briefly sketch
the definitions of two invariants of $G$:
the $J$-invariant and the Rost invariant.
See \cite{PSZ} and \cite[\S31]{Inv} for formal definitions.

\begin{ntt}[$J$-invariant]
Let $p$ be a prime number and
$R$ be a motive which is a finite direct sum of twisted Tate
motives. The polynomial $$P(R,t)=\sum_{i\ge 0}a_it^i\in\zz[t],$$
where $a_i=\dim\Ch^i(R)$ is called the Poincar\'e polynomial of $R$ mod $p$.

Informally speaking, the $J$-invariant of a simple group $G$ is a discrete invariant
which measures the motivic decomposition of the variety of Borel
subgroups of $G$.

Consider the motive of the variety of complete
$G$-flags (variety of Borel subgroups) with $\zz/p$- or
$\zz_{(p)}$-coefficients. It turns out that it is isomorphic
to a direct sum of shifted copies of the same indecomposable generically direct summand $R=R_p(G)$ such that over any
splitting field $K$ of $G$ the Poincar\'e polynomial of $R_K$
equals
\begin{equation}\label{jinva}
P(R_K,t)=\prod_{i=1}^r\frac{t^{d_ip^{j_i}}-1}{t^{d_i}-1}
\end{equation}
for some 
given numbers $r$ and $d_1\le d_2\le\ldots\le d_r$ which depend only on the Dynkin type of $G$
and some $r$-tuple $(j_1,\ldots,j_r)$ of non-negative integers
(see \cite[Definition~4.5 and Theorem~4.9]{PSZ}).

The tuple $J_p(G)=(j_1,\ldots,j_r)$ arising from this decomposition is called
the {\it $J$-invariant} of $G$ modulo $p$. We leave the problem
of correctness of such a definition aside and refer to \cite{PSZ}
and \cite[Section~3]{QSZ} for more details.

An important application of the $J$-invariant which is essentially
used in the proof of Serre's problem described below is the classification
of generically split twisted flag varieties given in \cite[Theorems~5.5 and 5.7]{PS}
and \cite[Thm.~3.3]{PS11}.

Notice also that formula~\eqref{jinva} implies that
the variety of Borel subgroups of $G$
has a binary direct summand if $J_2(G)=(0,\ldots,0,1,0,\ldots,0)$.
In Lemma~\ref{lemmae8} we'll provide a concrete example.

\begin{ex}\label{eee}
If $G$ has type $\E_8$ and $p=2$, then $r=4$, $d_1=3$, $d_2=5$, $d_3=9$, and $d_4=15$.
Moreover, $j_4=0$ or $1$, and if $j_1=0$ then $j_2=j_3=0$ (see \cite[\S4, Table]{PSZ}).
\end{ex}

\end{ntt}

\begin{ntt}[Rost invariant]
Let $G$ be a simply connected simple algebraic group over $k$.
The Rost invariant of $G$ is an invariant of $G$-torsors which takes
values in $H_{et}^3(k,\qq/\zz(2))$.

Formally: consider the abelian group
$\mathrm{Inv}^3(G,\qq/\zz(2))$ of natural transformations of functors
$H_{et}^1(-,G)\to H_{et}^3(-,\qq/\zz(2))$ defined on the category of field extensions
of $k$.

It turns out that it is a finite cyclic group with a canonical generator
called the {\it Rost invariant} of $G$. If $G$ is adjoint and simply connected,
then one can identify $H_{et}^1(k,G)$ with the (pointed) set of isomorphism classes
of the twisted forms of $G$ over $k$. In particular, in this case one can
associate with each twisted form of $G$ an element in $H_{et}^3(k,\qq/\zz(2))$
(the Rost invariant). More generally, if $G$ is a simple group with trivial Tits algebras,
then one may speak about its Rost invariant, see \cite[Section~2]{GP07}.

\begin{ex}
Let $G$ be a group of type $\E_8$. It is known that it is adjoint
and simply connected, and the Rost invariant of $G$
takes values in
$$H_{et}^3(k,\zz/4)\oplus H_{et}^3(k,\zz/3)\oplus H_{et}^3(k,\mu_5^{\otimes 2}),$$
i.e., consists of a mod-$4$ (or mod-$2$), mod-$3$, and mod-$5$ component (see \cite[\S9--16 and App.~A]{GMS}
and references there). Notice that $4\cdot 3\cdot 5=60$ is the Dynkin index of $\E_8$.
If the group $G$ splits over a field extension of degree coprime
to $l$, then the $l$-component of its Rost invariant is zero $(l\in\zz)$.
\end{ex}
\end{ntt}

\begin{lem}\label{lemmae8}
Let $G$ be a group of type $\E_8$ and $X$ the variety of Borel
subgroups of $G$. If the even component of the Rost
invariant of $G$ is trivial, then the motive of $X$ with
$\zz/2$- (or $\zz_{2}$-)coefficients is a direct sum of
twisted copies of a motive $R$ whose Poincar\'e polynomial over any splitting field of $G$
equals $1+t^{15}\in\zz[t]$.
\end{lem}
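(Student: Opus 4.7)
The plan is to determine $J_2(G)=(j_1,j_2,j_3,j_4)$ explicitly and then read off the Poincar\'e polynomial of $R=R_2(G)$ from formula~\eqref{jinva}. Indeed, \cite[Theorem~4.9]{PSZ} (the theorem behind the very definition of $J_p(G)$) yields that the motive of $X$ with $\zz/2$-coefficients is a direct sum of twisted copies of an indecomposable generically split motive $R$ whose Poincar\'e polynomial over a splitting field is $\prod_{i=1}^{4}(t^{d_i 2^{j_i}}-1)/(t^{d_i}-1)$, where $(d_1,d_2,d_3,d_4)=(3,5,9,15)$ by Example~\ref{eee}. Substituting the tuple $(0,0,0,1)$ produces exactly $(t^{30}-1)/(t^{15}-1)=1+t^{15}$, while the degenerate case $(0,0,0,0)$ (in which $G$ splits over an odd-degree extension) can be absorbed by regrouping Tate summands into copies of $\zz/2\oplus\zz/2\{15\}$, still of Poincar\'e polynomial $1+t^{15}$.

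By Example~\ref{eee}, $j_4\in\{0,1\}$ and $j_1=0$ already forces $j_2=j_3=0$, so the whole problem reduces to establishing $j_1=0$. I would argue by contradiction: if $j_1\neq 0$, the PSZ definition of the $J$-invariant via Steenrod operations on $\Ch^{*}(X)$ supplies a codimension-$d_1=3$ class on $X$ which is not rational over $k$. Via the Milnor-operation and Bloch-Kato machinery developed throughout this paper (mirroring in shape, but one level below, the construction of $\mu$ and $\delta$ in Section~\ref{mainsection}, together with Lemma~\ref{l1}), such a non-rational class yields a non-zero element $\alpha\in H^{3}(k,\zz/2)$. For $\E_8$-torsors the degree-$3$ mod-$2$ cohomological invariants are generated up to scalar by the mod-$2$ Rost invariant, so $\alpha$ must coincide (up to a unit) with the even component of $r_G$, which vanishes by hypothesis---contradiction.

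The main obstacle is rigorously carrying out the step ``non-rational codimension-$3$ cycle on $X$ $\leadsto$ the mod-$2$ Rost invariant of $G$''. This is an instance of the motives-to-invariants direction of the paper (the same philosophy as in Theorem~\ref{mainthm}), specialized to $(n,p)=(3,2)$ and to the variety of Borel subgroups of $G$; it also requires the classification of degree-$3$ mod-$2$ invariants of $\E_8$-torsors, which identifies any such $\alpha$ with a scalar multiple of the reduction of $r_G$. Once this identification is in hand, the contradiction above forces $j_1=0$, whence $j_2=j_3=0$ and $j_4\in\{0,1\}$, and the formula~\eqref{jinva} completes the proof.
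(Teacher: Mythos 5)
Your reduction to establishing $j_1=0$ is correct and efficient: Example~\ref{eee} gives $j_4\in\{0,1\}$ and the implication $j_1=0\Rightarrow j_2=j_3=0$, so $j_1=0$ forces $J_2(G)\in\{(0,0,0,0),(0,0,0,1)\}$, and in both cases formula~\eqref{jinva} (after regrouping Tate summands in the split case) yields the required binary Poincar\'e polynomial.

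However, the key step --- ``$j_1\neq 0$ yields a non-rational codimension-$3$ cycle, which in turn produces a nonzero $\alpha\in H^3(k,\zz/2)$, which must be a scalar multiple of the mod-$2$ Rost invariant'' --- is a genuine gap, and you acknowledge as much. The motives-to-invariants machinery in this paper (Theorems~\ref{mainthm} and~\ref{binmot}) requires either a special correspondence of type $(n,p)$ on a $(p^{n-1}-1)$-dimensional variety, or a binary direct summand $M$ with $M\otimes X\simeq X\oplus X\{\dim X\}$. When $j_1\neq 0$, the relevant generically split summand $R_2(G)$ of the flag variety is \emph{not} binary (its Poincar\'e polynomial contains a factor $1+t^3$ or worse), and neither the variety of Borel subgroups nor any obvious subquotient is a $2$-dimensional $\nu_2$-variety carrying a special correspondence, so neither theorem applies off the shelf. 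Your sketch of ``mirroring the construction of $\mu$ and $\delta$ one level below'' is not supported by any lemma in the paper, and making it rigorous would amount to proving a new result. You would also need a precise form of the classification of degree-$3$ mod-$2$ invariants of $\E_8$-torsors to pin $\alpha$ down as the Rost invariant, and you would then essentially be re-deriving information already encoded in the result of Garibaldi on the kernel of the Rost invariant.

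The paper proceeds very differently and avoids this obstacle entirely. After reducing to base fields with no odd-degree extensions, it observes that if the Rost invariant of $G$ is trivial then so is that of $G_{k(X_i)}$ for each maximal parabolic type $i$; since the semisimple anisotropic kernel of $G_{k(X_i)}$ has rank $\le 7$, Garibaldi's theorem on the triviality of the Rost invariant kernel for low-rank groups forces $G_{k(X_i)}$ to split, hence each $X_i$ is generically split. Feeding this into the classification of generically split $\E_8$-homogeneous varieties from \cite[Theorem~5.7(7)]{PS} together with the table of admissible $J$-invariants in \cite{PSZ} directly restricts $J_2(G)$ to $(0,0,0,0)$ or $(0,0,0,1)$. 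This is a concrete citation-based argument, whereas your route would require constructing a new cohomological invariant in a setting where the paper's construction does not apply.
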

\begin{proof}
Since we are talking about the even component of the Rost invariant and
about the motives with $\zz/2$-coefficients we may assume that the base
field $k$ does not have any proper finite field extension of odd degree.

Let $X_i$ be the projective $G$-homogeneous variety of maximal parabolic
subgroups of $G$ of type $i$, $i=1,\ldots,8$. If the Rost invariant of $G$ is trivial,
then so is the Rost invariant of $G_{k(X_i)}$ and of its semisimple anisotropic kernel.
On the other hand, since by the Tits classification \cite{Ti66}
the semisimple anisotropic kernel of $G_{k(X_i)}$ is of type $\D_7$, $\E_7$, $\D_6$, $\E_6$,
is simple of rank smaller than $6$, or is trivial,
the group $G_{k(X_i)}$ must be split (see e.g. \cite[Theorem~0.5]{Ga01}).
Thus, $X_i$ is a generically split variety for all $i=1,\ldots,8$.

The classification of generically split varieties of type $\E_8$
(see \cite[Theorem~5.7(7)]{PS}) and the classification of the
$J$-invariants for groups of type $\E_8$ (see \cite[\S4, Table]{PSZ}
or Example~\ref{eee})
immediately imply that the $J$-invariant of $G$ modulo $2$ equals
either $J_2(G)=(0,0,0,1)$ or $(0,0,0,0)$.

In the first case we are done by formula~\eqref{jinva}. In the second
case the variety of Borel subgroups is a direct sum of twisted Tate
motives and the present lemma is trivial.

Finally, by Lemma~\ref{lift}
one can lift a motivic decomposition of $X$ with
$\zz/2$-coefficients to a motivic decomposition of $X$ with $\zz_{2}$-coefficients,
and the conclusion of the present lemma holds with $\zz_{2}$-coefficients as well.
\end{proof}

\begin{lem}\label{e8tr}
In the notation of Lemma~\ref{lemmae8} let $\mathcal{X}$ be the motive of the standard simplicial scheme associated with
the variety $X$. Then for the motive $R$ there is an exact triangle of the form
$$\mathcal{X}\{15\}\to R\to \mathcal{X}\to\mathcal{X}\{15\}[1].$$
\end{lem}
\begin{proof}
The proof is the same as the proof of \cite[Thm.~4.4]{Vo01} with Theorem~4.3 of \cite{Vo01} replaced by
Lemma~\ref{lemmae8} above (cf. Lemma~\ref{triang}).
Notice that Theorem~4.4 in \cite{Vo01} is formulated for norm quadrics, but the proof does not use any specific of
norm quadrics, only the existence of a binary direct summand.
\end{proof}

\begin{thm}\label{corre8e8}
Let $k$ be a field with $\mathrm{char}\,k=0$.
Let $G$ be a group of type $\E_8$ over $k$ whose even component of the Rost invariant is trivial.
Then there exists a functorial invariant $u\in H_{et}^5(k,\zz/2)$ of $G$ such that
for any field extension $K/k$ the invariant $u_K=0$ iff $G$ splits over
a field extension of $K$ of odd degree.
\end{thm}
\begin{proof}
Let $X$ be the variety of Borel subgroups of $G$. It is generically
split, since over $k(X)$ the group $G$ splits. By Lemma~\ref{lemmae8}
the motive of $X$ contains a binary direct summand $R$ which supports
zero-cycles of $X$.

Without loss of generality we may assume that $X$ has no zero-cycles of odd degree over $k$.
Consider the image $\overline{\Ch}(X)$
of the natural map $\Ch(X)\to\Ch(X_{k(X)})$.
By \cite[Proposition~6.1]{PSZ} and \cite[Theorem~5.8]{KM06}
$$\min\{i\mid\overline{\Ch}_i(X)\ne 0\}=15.$$

By Corollary~\ref{lemmacompr} there exists a smooth projective irreducible
variety $\widetilde Y$ of dimension $15$
birational to some closed
subvariety $Y$ of $X$ together with a morphism $\widetilde Y\to Y\hookrightarrow X$ such that $\widetilde Y_{k(X)}$ has a
zero-cycle of odd degree.
The converse obviously holds, i.e., $X_{k(\widetilde Y)}$ has
a zero-cycle of odd degree.

Therefore by Lemma~\ref{commonmotive} the variety $\widetilde Y$
has the same direct summand $R$. It follows from Lemma~\ref{rostbinary}
that $\widetilde Y$ is a $\nu_4$-variety. We are done by Theorem~\ref{binmot} and Lemma~\ref{e8tr}.
\end{proof}

The next theorem gives a positive answer to Serre's question described in the Introduction.

\begin{thm}\label{maintheorem}
Let $G$ be a group of type $\E_8$ over $\qq$ such that $G_\rr$ is
a compact Lie group, let $K/\qq$ be a field extension,
and $q=\langle\!\langle -1,-1,-1,-1,-1\rangle\!\rangle$
a $5$-fold Pfister form.

If $G_K$ is split, then $q_K$ is hyperbolic.
\end{thm}
\begin{proof}
The compact real group of type $\E_8$ has Rost invariant zero (see
\cite[13.4]{Ga08}), so we may speak of its invariant
$u\in H_{et}^5(\rr,\zz/2)=\{0,(-1)^5\}$ constructed in Theorem~\ref{corre8e8}.

If $u=0$, then by Theorem~\ref{corre8e8}
this compact group splits over a field extension of odd degree.
Since $\rr$ has only one field extension of odd degree, we come to
a contradiction. Thus, $u=(-1)^5$.

Let now $G$ be a group of type $\E_8$ defined over $\qq$ that becomes
compact $\E_8$ over $\rr$. 
Set $F=\qq(\sqrt{-1})$. Then by \cite[Ch.~II, \S4.4, Prop.~13]{Se97} $H_{et}^d(F,\zz/p^m(d-1))=0$ for all prime
numbers $p$, all $m\ge 1$ and all $d\ge 3$. By restriction--corestriction argument, $H_{et}^d(\qq,\zz/p^m(d-1))=0$
if $p$ is odd, and $H_{et}^d(\qq,\zz/2^m(d-1))$ is $2$-torsion for all $m\ge 1$ and all $d\ge 3$.

Therefore by \cite[App.~A]{GMS} the Rost invariant takes values in $$H_{et}^3(\qq,\qq/\zz(2))=H_{et}^3(\qq,\zz/2).$$
Since $H_{et}^d(F,\zz/2)=0$ for $d\ge 3$, \cite[Cor.~30.12(1)]{Inv} gives that the multiplication by $(-1)$
is an injection from $H_{et}^d(\qq,\zz/2)\to H_{et}^{d+1}(\qq,\zz/2)$. It follows now from \cite[Satz~3]{Ar75} that
$H_{et}^d(\qq,\zz/2)$ injects into $H_{et}^d(\rr,\zz/2)$.

Therefore, $G$ has Rost invariant zero over $\qq$, and
again we may speak of the invariant $u\in H_{et}^5(\qq,\zz/2)$ of the group $G$.
But restriction identifies $H_{et}^5(\qq,\zz/2)$ with $H_{et}^5(\rr,\zz/2)$. So $u=(-1)^5$.

Finally, if $G_K$ splits for some field extension $K/\qq$, then $u_K=0$
by Theorem~\ref{corre8e8}. We are done.
\end{proof}

\bibliographystyle{chicago}

\medskip

\medskip

\noindent
\sc{Nikita Semenov\\
Johannes Gutenberg-Universit\"at Mainz, Institut f\"ur Mathematik, Staudingerweg 9,
D-55128 Mainz, Germany}\\
{\tt semenov@uni-mainz.de}

\end{document}